\newcommand{\comment}[1]{}
\newcommand{\bR}{{\mathbb R}}
\newcommand{\bZ}{{\mathbb Z}}
\def\C{{\mathcal C}}
\def\BMO{B\! M\! O}
\newcounter{rea}
\newcounter{rek}
\newcounter{res}
\begin{document}

\title[Bilinear decompositions and  commutators]{Bilinear decompositions and  commutators of singular integral operators}         

\author{Luong Dang  KY}    
\address{MAPMO-UMR 6628,
D\'epartement de Math\'ematiques, Universit\'e d'Orleans, 45067
Orl\'eans Cedex 2, France} 
\email{{\tt dangky@math.cnrs.fr}}

\keywords{Calder\'on-Zygmund operators, bilinear decompositions,  commutators,  Hardy spaces, wavelet characterizations, BMO  spaces, atoms, bilinear operators}
\subjclass[2010]{42B20 (42B30, 42B35, 42B25)}

\begin{abstract}

Let $b$ be a $BMO$-function. It is well-known that the linear commutator $[b, T]$ of a Calder\'on-Zygmund operator $T$  does not, in general, map continuously $H^1(\mathbb R^n)$ into $L^1(\mathbb R^n)$. However, P\'erez showed that if $H^1(\mathbb R^n)$ is replaced by a suitable atomic subspace $\mathcal H^1_b(\mathbb R^n)$ then the commutator is continuous from  $\mathcal H^1_b(\mathbb R^n)$ into $L^1(\mathbb R^n)$. In this paper, we find the largest subspace $H^1_b(\mathbb R^n)$ such that all commutators of Calder\'on-Zygmund operators are continuous from  $H^1_b(\mathbb R^n)$ into $L^1(\mathbb R^n)$. Some equivalent characterizations of  $H^1_b(\mathbb R^n)$ are also given. We also study the commutators $[b,T]$ for $T$ in a class $\mathcal K$ of sublinear operators containing almost all important operators in harmonic analysis. When $T$ is linear, we prove that there exists a bilinear operators $\mathfrak R= \mathfrak R_T$ mapping continuously $H^1(\mathbb R^n)\times BMO(\mathbb R^n)$ into $L^1(\mathbb R^n)$ such that for all $(f,b)\in H^1(\mathbb R^n)\times BMO(\mathbb R^n)$, we have
\begin{equation}\label{abstract 1}
[b,T](f)= \mathfrak R(f,b) + T(\mathfrak S(f,b)),
\end{equation}
where $\mathfrak S$ is a bounded bilinear operator from $H^1(\mathbb R^n)\times BMO(\mathbb R^n)$ into $L^1(\mathbb R^n)$  which does not depend on $T$. In the particular case of $T$  a Calder\'on-Zygmund operator satisfying $T1=T^*1=0$ and $b$  in $BMO^{\rm log}(\mathbb R^n)$-- the  generalized $\BMO$ type space  that has been introduced by Nakai and Yabuta to characterize  multipliers of $\BMO(\bR^n)$ --we prove that the commutator $[b,T]$ maps continuously $H^1_b(\mathbb R^n)$ into $h^1(\mathbb R^n)$. Also, if $b$ is in $BMO(\mathbb R^n)$ and $T^*1 = T^*b = 0$, then the commutator $[b, T]$  maps continuously $H^1_b (\mathbb R^n)$ into $H^1(\mathbb R^n)$. When $T$ is sublinear, we prove that  there exists a bounded subbilinear operator $\mathfrak R= \mathfrak R_T: H^1(\mathbb R^n)\times BMO(\mathbb R^n)\to L^1(\mathbb R^n)$  such that for all $(f,b)\in H^1(\mathbb R^n)\times BMO(\mathbb R^n)$, we have 
\begin{equation}\label{abstract 2}
|T(\mathfrak S(f,b))|- \mathfrak R(f,b)\leq |[b,T](f)|\leq \mathfrak R(f,b) + |T(\mathfrak S(f,b))|.
\end{equation}

The bilinear decomposition (\ref{abstract 1})  and the subbilinear decomposition (\ref{abstract 2})  allow us to give a general overview of all known weak and strong $L^1$-estimates.
\end{abstract}

\maketitle
\newtheorem{theorem}{Theorem}[section]
\newtheorem{lemma}{Lemma}[section]
\newtheorem{proposition}{Proposition}[section]
\newtheorem{remark}{Remark}[section]
\newtheorem{corollary}{Corollary}[section]
\newtheorem{definition}{Definition}[section]
\newtheorem{example}{Example}[section]
\numberwithin{equation}{section}
\newtheorem{Theorem}{Theorem}[section]
\newtheorem{Lemma}{Lemma}[section]
\newtheorem{Proposition}{Proposition}[section]
\newtheorem{Remark}{Remark}[section]
\newtheorem{Corollary}{Corollary}[section]
\newtheorem{Definition}{Definition}[section]
\newtheorem{Example}{Example}[section]

\section{Introduction}
Given a function $b$ locally integrable on $\mathbb R^n$, and a Calder\'on-Zygmund operator $T$, we consider the linear commutator $[b, T]$ defined for smooth, compactly supported functions $f$ by
$$[b, T](f)=bT(f) - T(bf).$$

A classical result of R. Coifman, R. Rochberg and G. Weiss (see \cite{CRW}), states that the commutator $[b,T]$ is continuous on $L^p(\mathbb R^n)$ for $1 <p<\infty$, when $b\in BMO(\mathbb R^n)$. Unlike the theory of Calder\'on-Zygmund operators, the proof of this result does not rely on a weak type $(1, 1)$ estimate for $[b, T]$. In fact, it was shown in \cite{Pe} that, in general, the linear commutator fails to be of weak type $(1, 1)$, when $b$ is in $BMO(\mathbb R^n)$. Instead, an endpoint theory was provided for this operator. It is well-known that any singular integral operator maps $H^1(\mathbb R^n)$ into $L^1(\mathbb R^n)$. However, it was observed in \cite{HST} that the commutator $[b, H]$ with $b$ in $BMO(\mathbb R)$, where $H$ is Hilbert transform on $\mathbb R$, does not map, in general, $H^1(\mathbb R)$ into $L^1(\mathbb R)$.  Instead of this, the weak type estimate $(H^1, L^1)$ for $[b,T]$ is well-known, see for example \cite{LiuL, LWY, YHL}.  Remark that intuitively one would like to write
$$[b,T](f)= \sum_{j=1}^\infty \lambda_j (b-b_{B_j})T(a_j)- T\Big(\sum_{j=1}^\infty \lambda_j (b-b_{B_j})a_j\Big),$$
where $f=\sum_{j=1}^\infty \lambda_j a_j $ a atomic decomposition of $f$ and $b_{B_j}$ the average of $b$ on $B_j$. This is equivalent to ask for a commutation property
\begin{equation}\label{equality}
\sum_{j=1}^\infty \lambda_j b_{B_j}T(a_j)= T\Big(\sum_{j=1}^\infty \lambda_j b_{B_j} a_j\Big).
\end{equation}

Even if most authors, for instance in \cite{LiuL, LWY, YHL, ZH, LLM, WL, LinL}, implicitely use (\ref{equality}), one must be careful at this point. Indeed, the equality (\ref{equality}) is not clear since the two series $\sum_{j=1}^\infty \lambda_j b_{B_j}T(a_j)$ and $\sum_{j=1}^\infty \lambda_j b_{B_j} a_j$  are not yet  well-defined, in general. We refer the reader to \cite{Bo}, Section 3,  to be convinced that one must be careful with Equality (\ref{equality}).

Although the commutator $[b,T]$ does not map continuously, in general,  $H^1(\mathbb R^n)$ into $L^1(\mathbb R^n)$, following P\'erez \cite{Pe} one can find a subspace $\mathcal H^1_b(\mathbb R^n)$ of $H^1(\mathbb R^n)$ such that $[b,T]$ maps continuously  $\mathcal H^1_b(\mathbb R^n)$ into $L^1(\mathbb R^n)$. Recall that (see \cite{Pe}) a function $a$ is a $b$-atom if 

i) supp $a\subset Q$ for some cube $Q$,

ii) $\|a\|_{L^\infty}\leq |Q|^{-1}$,

iii) $\int_{\mathbb R^n} a(x)dx=\int_{\mathbb R^n} a(x)b(x)dx= 0$.\\
The space $\mathcal H^1_b(\mathbb R^n)$ consists of the subspace of $L^1(\mathbb R^n)$ of functions $f$ which can be written as $f=\sum_{j=1}^\infty \lambda_j a_j$ where $a_j$ are $b$-atoms, and $\lambda_j$ are complex numbers with $\sum_{j=1}^\infty |\lambda_j|<\infty$.

In \cite{Pe} the author showed that the commutator $[b,T]$ is bounded from $\mathcal H^1_b(\mathbb R^n)$ into $L^1(\mathbb R^n)$ by establishing that 
\begin{equation}\label{Perez 1}
\sup\{\|[b,T](a)\|_{L^1}: a \,\mbox{is a}\; b{\rm -atom}\}<\infty.
\end{equation}
This leaves a gap in the proof which we fill here (see below). Indeed, as it is pointed out in \cite{Bo}, there exists a linear operator $U$ defined on the space of all finite linear combination of $(1,\infty)$-atoms satisfying
$$\sup\{\|U(a)\|_{L^1}: a \,\mbox{is a}\, (1,\infty){\rm -atom}\}<\infty,$$
 but which does not admit an extension to a bounded operator from $H^1(\mathbb R^n)$ into $L^1(\mathbb R^n)$. From this result, we see that Inequality (\ref{Perez 1}) does not suffice to conclude that $[b,T]$ is bounded from $\mathcal H^1_b(\mathbb R^n)$ into $L^1(\mathbb R^n)$. In the setting of $H^1(\mathbb R^n)$, it is well-known (see \cite{MSV1} or \cite{YZ} for details) that  a linear operator $U$ can be extended to a  bounded operator from $H^1(\mathbb R^n)$ into $L^1(\mathbb R^n)$ if for some $1<q<\infty$, we have
$$\sup\{\|U(a)\|_{L^1}: a \,\mbox{is a}\, (1,q){\rm -atom}\}<\infty.$$
It follows from the fact that the finite atomic norm on $H^{1,q}_{\rm fin}(\mathbb R^n)$ is equivalent to the standard infinite atomic decomposition norm on $H^{1,q}_{\rm ato}(\mathbb R^n)$ through the grand maximal function characterization of $H^1(\mathbb R^n)$. However, one can not use this method in the context of $\mathcal H^1_b(\mathbb R^n)$.

Also, a natural question arises: can one  find {\sl the largest subspace of $H^1(\mathbb R^n)$} (of course, this space contains $\mathcal H^1_b(\mathbb R^n)$, see also Theorem \ref{compare}) such that all commutators $[b,T]$ of Calder\'on-Zygmund operators are bounded from this space into $L^1(\mathbb R^n)$? For $b\in BMO(\mathbb R^n)$, a  non-constant  function, we consider the space $H^1_b(\mathbb R^n)$ consisting of all $f\in H^1(\mathbb R^n)$ such that the (sublinear) commutator $[b,\mathfrak M]$ of $f$ belongs to $L^1(\mathbb R^n)$ where $\mathfrak M$ is the nontangential grand maximal operator (see Section 2). The norm on $H^1_b(\mathbb R^n)$ is defined by $\|f\|_{H^1_b}:= \|f\|_{H^1}\|b\|_{BMO}+ \|[b,\mathfrak M](f)\|_{L^1}$. Here we just consider $b$ is a non-constant $BMO$-function since the commutator $[b,T]=0$ if $b$ is a constant function. Then, we  prove that  $[b,T]$ is bounded from $H^1_b(\mathbb R^n)$ into $L^1(\mathbb R^n)$ for every Calder\'on-Zygmund singular integral operator $T$ (in fact it holds for all $T\in \mathcal K$, see below). Furthermore, $H^1_b(\mathbb R^n)$ {\sl is the largest space having this property} (see Remark \ref{the largest space}). This answers the question above. Besides, we also consider the class $\mathcal K$  of all sublinear operators $T$, bounded from $H^1(\mathbb R^n)$ into $L^1(\mathbb R^n)$, satisfying the condition
$$\|(b-b_Q)Ta\|_{L^1}\leq C \|b\|_{BMO}$$
for all $BMO$-function $b$, $H^1$-atom $a$  related to the cube $Q$. Here $b_Q$ denotes the average of $b$ on $Q$, and $C>0$ is a constant independent of $b,a$. This class $\mathcal K$ contains almost all important operators in harmonic analysis: Calder\'on-Zygmund type operators, strongly singular integral operators, multiplier operators,  pseudo-differential operators, maximal type operators, the area integral operator of Lusin, Littlewood-Paley type operators, Marcinkiewicz operators,  maximal Bochner-Riesz operators, etc... (See Section 4). When $T$ is linear and belongs to $\mathcal K$, we prove that there exists a bounded bilinear operators $\mathfrak R= \mathfrak R_T: H^1(\mathbb R^n)\times BMO(\mathbb R^n)\to L^1(\mathbb R^n)$ such that for all $(f,b)\in H^1(\mathbb R^n)\times BMO(\mathbb R^n)$, we have the following bilinear decomposition
\begin{equation}\label{Bilinear decomposition}
[b,T](f)= \mathfrak R(f,b) + T(\mathfrak S(f,b)),
\end{equation}
where $\mathfrak S$ is a bounded bilinear operator from $H^1(\mathbb R^n)\times BMO(\mathbb R^n)$ into $L^1(\mathbb R^n)$  which does not depend on $T$ (see Section 3). This bilinear decomposition is strongly related to our previous result in \cite{BGK} on paraproduct and product on $H^1(\mathbb R^n)\times BMO(\mathbb R^n)$.

 We then prove that $[b,T]$ is bounded from $H^{1}_b(\mathbb R^n)$ into $L^1(\mathbb R^n)$ (see Theorem \ref{new2}) via  Bilinear decomposition (\ref{Bilinear decomposition}) (see Theorem \ref{Th1}) and some characterizations of $H^{1}_b(\mathbb R^n)$ (see Theorem \ref{Bonami}). Furthermore, by using  the weak convergence theorem in $H^1(\mathbb R^n)$ of Jones and Journ\'e, we prove that $\mathcal H^{1}_b(\mathbb R^n)\subset H^{1}_b(\mathbb R^n)$  (see Theorem \ref{compare}). These allow us to fill the gap  mentioned above in \cite{Pe}.

 On the other hand, as an  immediate corollary of Bilinear decomposition (\ref{Bilinear decomposition}), we also obtain the weak type estimate $(H^1, L^1)$ for the commutator $[b,T]$, where $T$ is a Calder\'on-Zygmund type operator, a strongly singular integral operator, a multiplier operator or a  pseudo-differential operator. We  also  point out  that weak type estimates and Hardy type estimates for  the (linear) commutators of multiplier operators and of strongly singular Calder\'on-Zygmund operators have been studied recently (see  \cite{ZH, LLM, WL} for the multiplier operators and  \cite{LinL} for strongly singular Calder\'on-Zygmund operators).

Next, two natural questions for Hardy-type estimates of the commutator $[b, T]$ arised: when does $[b, T]$ map $H^1_
b (\mathbb R^n)$ into $h^1(\mathbb R^n)$ and when does $[b, T]$ map $H^1_b (\mathbb R^n)$ into $H^1(\mathbb R^n)$? 

This paper gives two sufficient conditions for the above two questions. Recall that  $\BMO^{\rm log}(\bR^n)$ --the  generalized $\BMO$ type space  that has been introduced by Nakai and Yabuta  \cite {NY} to characterize  multipliers of $\BMO(\bR^n)$-- is the space of all locally integrable functions $f$ such that
$$\|f\|_{BMO^{\rm log}}:= \sup\limits_{B(a,r)}\frac{|\log r|+ \log (e+|a|)}{|B(a,r)|}\int_{B(a,r)} |f(x)- f_{B(a,r)}|dx<\infty.$$

We obtain that if $T$ is a Calder\'on-Zygmund operator satisfying $T1 = T^*1 = 0$ and $b$ is in $BMO^{\rm log}(\mathbb R^n)$, then the linear commutator $[b, T]$ maps continuously $H^1_b (\mathbb R^n)$ into $h^1(\mathbb R^n)$. This gives a sufficient condition to the first problem. For the second one, we prove that if $T$ is a Calder\'on-Zygmund operator satisfying $T^*1 = T^*b = 0$ and $b$ is in $BMO(\mathbb R^n)$, then the
linear commutator $[b, T]$ maps continuously $H^1_b (\mathbb R^n)$ into $H^1(\mathbb R^n)$.

 A difficult point to prove the first result is that we have to deal directly  with $f\in H^{1}_b(\mathbb R^n)$. It would be easier to do it for atomic type Hardy spaces as in the case of $\mathcal H^1_b(\mathbb R^n)$. However, we do not know  {\sl  whether there exists an atomic characterization for the space $ H^{1}_b(\mathbb R^n)$. This is still an open question}.

Let $X$ be a Banach space. We say that an operator $T: X\to L^1(\mathbb R^n)$ is a sublinear operator if for all $f,g\in X$ and $\alpha,\beta\in \mathbb C$, we have
$$|T(\alpha f+\beta g)(x)|\leq |\alpha||Tf(x)|+ |\beta||Tg(x)|.$$
Obviously, a linear operator $T: X\to L^1(\mathbb R^n)$ is a sublinear operator. We also say that a operator $\mathfrak T:H^1(\mathbb R^n)\times BMO(\mathbb R^n)\to L^1(\mathbb R^n)$ is a subbilinear operator if for all $(f,g)\in H^1(\mathbb R^n)\times BMO(\mathbb R^n)$ the operators $\mathfrak T(f,\cdot): BMO(\mathbb R^n)\to L^1(\mathbb R^n)$ and $\mathfrak T(\cdot,g): H^1(\mathbb R^n)\to L^1(\mathbb R^n)$ are sublinear operators.

In this paper, we also obtain the subbilinear decomposition for sublinear commutator. More precisely, when $T\in \mathcal K$ is a sublinear operator, we prove that  there exists a bounded subbilinear operator $\mathfrak R= \mathfrak R_T: H^1(\mathbb R^n)\times BMO(\mathbb R^n)\to L^1(\mathbb R^n)$ so that for all $(f,b)\in H^1(\mathbb R^n)\times BMO(\mathbb R^n)$, we have
\begin{equation}\label{subbilinear decomposition}
|T(\mathfrak S(f,b))| - \mathfrak R(f,b)\leq |[b,T](f)|\leq \mathfrak R(f,b) + |T(\mathfrak S(f,b))|.
\end{equation}
Then, the strong type estimate $(H^{1}_b, L^1)$ and the weak type estimate $(H^1,L^1)$ of the commutators of Littlewood-Paley type operators, of  Marcinkiewicz operators,  and of maximal Bochner-Riesz operators, can be viewed as an immediate corollary of (\ref{subbilinear decomposition}). When $H^{1}_b(\mathbb R^n)$ is replaced by  $\mathcal H^{1}_b(\mathbb R^n)$,  these type of estimates have also been considered recently (see for example \cite{Liu, CD, LX, LLL, LT, LL}).

Let us emphasize the three main purposes of this paper. First, we want to give the bilinear (resp., subbilinear) decompositions for the linear (resp., sublinear) commutators. Second, we find the largest subspace of $H^1(\mathbb R^n)$  such that all commutators of Calder\'on-Zygmund operators map continuously this space into $L^1(\mathbb R^n)$. Finally, we obtain the $(H^1_b, h^1)$ and $(H^1_b, H^1)$ type estimates for commutators of Calder\'on-Zygmund operators.

Our paper is organized as follows. In Section 2 we present some notations and preliminaries about the Calder\'on-Zygmund operators, the function spaces we use, and a short introduction to wavelets,  a useful tool in our work.  In Section 3 we state our two decomposition theorems (Theorem \ref{new1} and Theorem \ref{Th1}), the $(H^1_b, L^1)$ type estimates for commutators (Theorem \ref{new2}), and some remarks. The bilinear type estimates for commutators of Calder\'on-Zygmund operators (Theorem \ref{bilinear estimates}) and the boundedness of commutators of Calder\'on-Zygmund operators on Hardy spaces are also given in this section.  In Section 4 we give some examples of operators in the class $\mathcal K$ and recall our result from \cite{BGK} which decomposes a product of $f$ in $H^1(\mathbb R^n)$ and $g$ in $BMO(\mathbb R^n)$ as a sum of images by four bilinear operators defined through wavelets. These operators are fundamental  for the two  decomposition theorems. In Section 5 we study the space $H^1_b(\mathbb R^n)$. Section 6 and 7 are devoted to the proofs of the two decomposition theorems,  the $(H^1_b, L^1)$ type estimates  of commutators $[b,T]$ with $T\in \mathcal K$,  and the boundedness results of commutators of Calder\'on-Zygmund operators. Finally, in Section 8  we present without proofs  some results for commutators of fractional integrals.

Throughout the whole paper, $C$ denotes a positive geometric constant which is independent of the main parameters, but may change from line to line.  In $\mathbb R^n$, we denote by $Q=Q[x,r]:= \{y=(y_1,...,y_n)\in\mathbb R^n: \sup_{1\leq i\leq n}|y_i-x_i|\leq r\}$ a cube with center $x=(x_1,...,x_n)$ and radius $r>0$. For any measurable set $E$, we denote by $\chi_E$ its characteristic function, by $|E|$ its  Lebesgue measure, and by $E^c$ the set $\mathbb R^n\setminus E$. For a cube $Q$ and $f$ a locally integrable function, we denote by $f_Q$ the average of $f$ on $Q$.

{\bf Acknowledgements.} The author would like to thank Prof. Aline Bonami  for  many very valuable suggestions, discussions and advices to improve this paper. Specially, Theorem \ref{Th4} is a improvement from the previous version through her ideas. He would also like to thank  Prof. Sandrine Grellier for many helpful suggestions,  her carefully reading and revision of the manuscript. The author is deeply indebted to them. 

\section{Some preliminaries and notations}

As usual, $\mathcal S(\mathbb R^n)$ denotes the Schwartz class of test functions on $\mathbb R^n$, $\mathcal S'(\mathbb R^n)$ the space of tempered distributions, and $C^\infty_0(\mathbb R^n)$ the space of $C^\infty$-functions with compact support.

\subsection{Calder\'on-Zygmund operators}

 Let $\delta\in (0,1]$. A continuous function $K:\mathbb R^n\times \mathbb R^n\setminus\{(x,x):x\in \mathbb R^n\}\to\mathbb C$ is said to be a $\delta$-Calder\'on-Zygmund singular integral kernel if there exists a constant $C>0$  such that
$$|K(x,y)|\leq \frac{C}{|x-y|^n}$$
for all $x\ne y$, and
$$ |K(x,y)-K(x',y)|+|K(y,x)-K(y,x')|\leq C\frac{|x-x'|^\delta}{|x-y|^{n+\delta}}$$
for all $2|x-x'|\leq |x-y|$.

A linear operator $T:\mathcal S(\mathbb R^n)\to\mathcal S'(\mathbb R^n)$ is said to be a $\delta$-Calder\'on-Zygmund operator if $T$ can be extended to a bounded operator on $L^2(\mathbb R^n)$ and if there exists a $\delta$-Calder\'on-Zygmund singular integral kernel $K$ such that for all $f\in C^\infty_0(\mathbb R^n)$ and all $x\notin$ supp $f$, we have
$$Tf(x)=\int_{\mathbb R^n}K(x,y)f(y)dy.$$
We say that $T$ is a Calder\'on-Zygmund operator if it is a $\delta$-Calder\'on-Zygmund operator for some $\delta\in (0,1]$.

 We say that the Calder\'on-Zygmund operator $T$ satisfies the condition $T^*1=0$ (resp., $T1=0$) if $\int_{\mathbb R^n} Ta(x)dx=0$ (resp., $\int_{\mathbb R^n} T^*a(x)dx=0$) holds for all classical $H^1$-atoms $a$. Let $b$ be a locally integrable function on $\mathbb R^n$. We say that the Calder\'on-Zygmund operator $T$ satisfies the condition $T^*b=0$ if $\int_{\mathbb R^n} b(x)Ta(x)dx=0$ holds for all classical $H^1$-atoms $a$.

\subsection{Function spaces}

We first consider the subspace $\mathcal A$ of $\mathcal S(\mathbb R^n)$ defined by
$$\mathcal A=\Big\{\phi\in \mathcal S(\mathbb R^n): |\phi(x)|+ |\nabla\phi(x)|\leq (1+ |x|^2)^{-(n+1)}\Big\},$$
where $\nabla= (\partial/\partial x_1,..., \partial/\partial x_n)$ denotes the gradient. We then define
$$\mathfrak M f(x):= \sup\limits_{\phi\in\mathcal A}\sup\limits_{|y-x|<t}|f*\phi_t(y)|\quad\mbox{and}\quad  \mathfrak mf(x):= \sup\limits_{\phi\in\mathcal A}\sup\limits_{|y-x|<t<1}|f*\phi_t(y)|,$$
where $\phi_t(\cdot)= t^{-n}\phi(t^{-1}\cdot)$. The space $H^1(\mathbb R^n)$ is the space of all tempered distributions $f$ such that $\mathfrak M f\in L^1(\mathbb R^n)$ equipped with the norm $\|f\|_{H^1}= \|\mathfrak M f\|_{L^1}$. The space $h^1(\mathbb R^n)$ denotes the space of all tempered distributions $f$ such that $\mathfrak m f\in L^1(\mathbb R^n)$ equipped with the norm $\|f\|_{h^1}= \|\mathfrak m f\|_{L^1}$. The space $H^{\rm log}(\mathbb R^n)$ (see \cite{Ky, BGK}) denotes the space of all tempered distributions $f$ such that $\mathfrak M f\in L^{\rm log}(\mathbb R^n)$  equipped with the norm $\|f\|_{H^{\rm log}}= \|\mathfrak M f\|_{L^{\rm log}}$. Here  $L^{\rm log}(\mathbb R^n)$ is the space of all measurable functions $f$ such that $\int_{\mathbb R^n}\frac{|f(x)|}{\log(e+|x|)+ \log(e+|f(x)|)}dx<\infty$ with the (quasi-)norm
$$\|f\|_{L^{\rm log}}:=\inf\left\{\lambda>0: \int_{\mathbb R^n}\frac{\frac{|f(x)|}{\lambda}}{\log(e+|x|)+ \log(e+ \frac{|f(x)|}{\lambda})}dx\leq 1\right\}.$$

Clearly, for any $f\in H^1(\mathbb R^n)$, we have
$$\|f\|_{h^1}\leq \|f\|_{H^1}\quad\mbox{and}\quad \|f\|_{H^{\rm log}}\leq \|f\|_{H^1}.$$

We remark that the local real Hardy space $h^1(\mathbb R^n)$, first introduced  by Goldberg \cite{Go}, is larger than $H^1(\mathbb R^n)$ and allows  more flexibility, since global cancellation conditions are not necessary. For example, the Schwartz space is contained in $h^1(\mathbb R^n)$ but not in $H^1(\mathbb R^n)$, and multiplication by cutoff  functions preserves  $h^1(\mathbb R^n)$ but not $H^1(\mathbb R^n)$. Thus it makes $h^1(\mathbb R^n)$ more suitable for working in domains and on manifolds. 

It is well-known (see \cite{FS} or \cite{St}) that  the dual of $H^1(\mathbb R^n)$ is $BMO(\mathbb R^n)$ the space of all  locally integrable functions $f$ with
$$\|f\|_{BMO}:=\sup\limits_{B}\frac{1}{|B|}\int_B |f(x)-f_B|dx<\infty,$$
where the supremum is taken over all balls $B$. We note $\mathbb Q := [0, 1)^n$ and, for $f$ a function in $BMO(\mathbb R^n)$,
$$\|f\|_{BMO^+}:= \|f\|_{BMO}+ |f_{\mathbb Q}|.$$

We should also point out that the space $H^{\rm log}(\mathbb R^n)$ arises naturally in the study of pointwise product of functions in $H^1(\mathbb R^n)$ with functions in $BMO(\mathbb R^n)$, and in the endpoint estimates for the div-curl lemma  (see for example \cite{BFG, BGK, Ky}).

 In \cite{Go} it was  shown that the dual of $h^1(\mathbb R^n)$  can be identified with the space $bmo(\mathbb R^n)$, consisting of locally integrable functions $f$ with
$$\|f\|_{bmo}:= \sup\limits_{|B|\leq 1}\frac{1}{|B|}\int_B |f(x)-f_B|dx+ \sup\limits_{|B|\geq 1}\frac{1}{|B|}\int_B |f(x)|dx<\infty,$$
where the supremums are taken over all balls $B$.

Clearly, for any $f\in bmo(\mathbb R^n)$, we have
$$\|f\|_{BMO}\leq \|f\|_{BMO^+}\leq C \|f\|_{bmo}.$$

We next recall that the space $VMO(\mathbb R^n)$ (resp., $vmo(\mathbb R^n)$) is the closure of $C^\infty_0(\mathbb R^n)$ in $(BMO(\mathbb R^n),\|\cdot\|_{BMO})$ (resp., $(bmo(\mathbb R^n),\|\cdot\|_{bmo})$). It is well-known that (see \cite{CW} and \cite{Daf}) the dual of $VMO(\mathbb R^n)$ (resp., $vmo(\mathbb R^n)$) is the  Hardy space $H^1(\mathbb R^n)$ (resp., $h^1(\mathbb R^n)$). We point out that the space $VMO(\mathbb R^n)$ (resp., $vmo(\mathbb R^n)$) considered by Coifman and Weiss (resp., Dafni \cite{Daf}) is different from the one considered by Sarason. Thanks to  Bourdaud \cite{Bou}, it coincides with the space  $VMO(\mathbb R^n)$ (resp., $vmo(\mathbb R^n)$) considered above.

In the study of the pointwise multipliers for $BMO(\mathbb R^n)$, Nakai and Yabuta  \cite{NY} introduced the space $BMO^{\rm log}(\mathbb R^n)$, consisting of locally integrable functions $f$ with
$$\|f\|_{BMO^{\rm log}}:= \sup\limits_{B(a,r)}\frac{|\log r|+ \log (e+|a|)}{|B(a,r)|}\int_{B(a,r)} |f(x)- f_{B(a,r)}|dx<\infty.$$
There, the authors proved  that a function $g$ is  a pointwise multiplier for $BMO(\mathbb R^n)$ if and only if $g$ belongs to $L^\infty(\mathbb R^n)\cap BMO^{\rm log}(\mathbb R^n)$. Furthermore,  it is also  shown in \cite{Ky} that the space $BMO^{\rm log}(\mathbb R^n)$ is the dual of the space $H^{\rm log}(\mathbb R^n)$.

\begin{Definition}
Let $b$ be a locally integrable function and $1<q\leq \infty$. A function $a$ is called a $(q, b)$-atom related to the cube $Q$ if 

i) supp $a\subset Q$,

ii) $\|a\|_{L^q}\leq |Q|^{1/q-1}$,

iii) $\int_{\mathbb R^n}a(x)dx= \int_{\mathbb R^n}a(x)b(x)dx=0$.
\end{Definition}

The space $\mathcal H^{1,q}_b(\mathbb R^n)$ consists of the subspace of $L^1(\mathbb R^n)$ of functions $f$ which can be written as $f=\sum_{j=1}^\infty \lambda_j a_j$, where $a_j$'s are $(q,b)$-atoms, $\lambda_j \in\mathbb C$, and $\sum_{j=1}^\infty|\lambda_j|<\infty$. As usual, we define on $\mathcal H^{1,q}_b(\mathbb R^n)$ the norm
$$\|f\|_{\mathcal H^{1,q}_b}:= \inf\Big\{\sum_{j=1}^\infty|\lambda_j|: f= \sum_{j=1}^\infty\lambda_j a_j\Big\}.$$

Observe that when $q=\infty$, then the space $\mathcal H^{1,\infty}_b(\mathbb R^n)$ is just the space $\mathcal H^1_b(\mathbb R^n)$ considered in \cite{Pe}. Furthermore, $\mathcal H^{1,\infty}_b(\mathbb R^n)\subset \mathcal H^{1,q}_b(\mathbb R^n)\subset H^1(\mathbb R^n)$ and the inclusions are continuous.

We next introduce the space $H^1_b(\mathbb R^n)$ as follows.

\begin{Definition}
Let $b$ be a non-constant $BMO$-function. The space $H^1_b(\mathbb R^n)$ consists of all $f$ in $H^1(\mathbb R^n)$ such that  $[b,\mathfrak M](f)(x)= \mathfrak M(b(x)f(\cdot)- b(\cdot)f(\cdot))(x)$ belongs to $L^1(\mathbb R^n)$. We equipped  $H^1_b(\mathbb R^n)$ with the norm $\|f\|_{H^1_b}:= \|f\|_{H^1}\|b\|_{BMO}+ \|[b,\mathfrak M](f)\|_{L^1}$.
\end{Definition}

\subsection{Prerequisites on Wavelets}

Let us consider a wavelet basis of $\bR$ with compact support. More explicitly, we are first given a $\C^1(\bR)$-wavelet in Dimension one, called $\psi$, such that $\{2^{j/2}\psi (2^j x-k)\}_{j,k\in \bZ}$ form an $L^2(\bR)$ basis. We assume that this wavelet basis comes for a multiresolution analysis (MRA) on $\bR$, as defined below (see \cite{Me}).
\begin{Definition}
A multiresolution analysis (MRA) on $\mathbb R$ is defined as an increasing sequence $\{V_j\}_{j\in\bZ}$ of closed subspaces of $L^2(\mathbb R)$ with the following four properties

i) $\bigcap_{j\in\mathbb Z}V_j=\{0\}$ and $\overline{\bigcup_{j\in\mathbb Z}V_j}=L^2(\mathbb R)$,

ii) for every $f\in L^2(\mathbb R)$ and every $j\in\mathbb Z$, $f(x)\in V_j$ if and only if $f(2x)\in V_{j+1}$,

iii) for every $f\in L^2(\mathbb R)$ and every $k\in\mathbb Z$, $f(x)\in V_0$ if and only if $f(x-k)\in V_0$,

iv) there exists a function $\phi\in L^2(\mathbb R)$, called the scaling function, such that the family $\{\phi_k(x)=\phi(x-k): k\in\mathbb Z\}$ is an orthonormal basis for $V_0$.
\end{Definition}
It is classical that, given an (MRA) on $\bR$, one can find a wavelet $\psi$ such that $\{2^{j/2}\psi (2^j x-k)\}_{k\in \bZ}$ is an orthonormal basis of $W_j$, the orthogonal complement of $V_j$ in $V_{j+1}$. Moreover, by Daubechies Theorem (see \cite{Dau}), it is possible to find a suitable (MRA) so that $\phi$ and $\psi$ are $\C^1(\bR)$ and compactly supported, $\psi$ has mean $0$ and $\int x\psi (x)dx=0$, which is known as the moment condition. We could content ourselves, in the following theorems, to have $\phi$ and $\psi$ decreasing sufficiently rapidly  at $\infty$, but proofs are simpler with compactly supported wavelets. More precisely we can choose $m>1$ such  that $\phi$ and $\psi$ are supported in the interval $1/2+m(-1/2, +1/2)$, which is obtained from $(0,1)$ by a dilation by $m$ centered at $1/2$.

Going back to $\bR^n$, we recall that a wavelet basis of $\bR^n$ is constructed  as follows. We call $E$ the set $E=\{0,1\}^n\setminus \{(0,\cdots, 0)\}$ and, for $\sigma \in E$, put $\psi^{\sigma}(x)=\phi^{\sigma_1}(x_1)\cdots \phi^{\sigma_n}(x_n)$, with
$\phi^{\sigma_j}(x_j)=\phi(x_j)$ for $\sigma_j =0$ while $\phi^{\sigma_j}(x_j)=\psi(x_j)$ for $\sigma_j =1$. Then the set $\{2^{nj/2}\psi^{\sigma} (2^j x-k)\}_{j\in \bZ, k\in\bZ^n, \sigma\in E}$ is an orthonormal basis of $L^2(\bR^n)$.
As it is classical, for $I$ a dyadic cube of $\bR^n$, which may be written as the set of $x$ such that $2^j x-k \in  (0,1)^n$, we note
$$\psi_I^{\sigma}(x)=2^{nj/2}\psi^{\sigma} (2^j x-k).$$
We also note $\phi_I=2^{nj/2}\phi_{(0,1)^n} (2^j x-k)$, with $\phi_{(0,1)^n}$ the scaling function in $n$ variables, given by
$\phi_{(0,1)^n}(x)=\phi(x_1)\cdots \phi(x_n)$.
In the sequel, the letter $I$ always refers to dyadic cubes. Moreover, we note $kI$  the cube of same center dilated by the coefficient $k$. Because of the assumption on the supports of $\phi$ and $\psi$, the functions
$\psi_I^{\sigma}$ and $\phi_I$ are supported in the cube $mI$.

The wavelet basis $\{\psi_I^\sigma\}$, obtained by letting $I$ vary among dyadic cubes and $\sigma$ in $E$, comes from an (MRA) in $\bR^n$, which we still note $\{V_j\}_{j\in\bZ}$, obtained by taking tensor products of the one-dimensional ones. 

The following theorem gives the wavelet characterization of $H^1(\mathbb R^n)$ (cf. \cite{Me, HW}).

\begin{Theorem}\label{wavelet characterization}
There exists a constant $C>0$ such that $f\in H^1(\mathbb R^n)$ if and only if $\mathcal W_\psi f:= \Big(\sum_I \sum_{\sigma\in E}|\langle f,\psi^\sigma_I\rangle|^2 |I|^{-1}\chi_I\Big)^{1/2}\in L^1(\mathbb R^n)$, moreover,
$$C^{-1}\|f\|_{H^1}\leq \|\mathcal W_\psi f\|_{L^1}\leq C \|f\|_{H^1}.$$
\end{Theorem}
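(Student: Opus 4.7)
\medskip

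\noindent\textbf{Proof proposal.} The plan is to prove the two inequalities by passing through the classical area-function characterization of $H^1(\bR^n)$, exploiting that the wavelet $\psi^\sigma$ is compactly supported, $C^1$, and has mean zero together with $\int x\psi(x)\,dx=0$. I would first reduce $\mathcal W_\psi f$ to a discrete ``tent-space'' square function associated with the dyadic cubes $I$, and then compare it to a continuous Lusin-type area function $S_\varphi f(x)=\bigl(\int_{\Gamma(x)}|f*\varphi_t(y)|^2\,\frac{dy\,dt}{t^{n+1}}\bigr)^{1/2}$ built from a Schwartz function $\varphi$ with similar cancellation. The equivalence $\|S_\varphi f\|_{L^1}\simeq \|f\|_{H^1}$ is classical, so the whole theorem reduces to $\|\mathcal W_\psi f\|_{L^1}\simeq \|S_\varphi f\|_{L^1}$.

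For the direction $\|\mathcal W_\psi f\|_{L^1}\lesssim \|f\|_{H^1}$, I would argue by atomic decomposition: write $f=\sum_j\lambda_j a_j$ with $a_j$ classical $(1,\infty)$-atoms supported in cubes $Q_j$, and show $\|\mathcal W_\psi a\|_{L^1}\leq C$ uniformly in the atom $a$. Fix an atom $a$ supported in $Q$. On a fixed dilate of $Q$ (say $2mQ$) I would use the Plancherel-type bound $\sum_{I,\sigma}|\langle a,\psi_I^\sigma\rangle|^2|I|^{-1}\chi_I\leq C|Q|^{-1}\chi_{2mQ}$ coming from the $L^2$-boundedness of the wavelet system, so that $\int_{2mQ}\mathcal W_\psi a\leq C|2mQ|^{1/2}|Q|^{-1/2}\leq C$ by Cauchy-Schwarz. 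Away from $2mQ$, only wavelets $\psi_I^\sigma$ with $mI\cap Q\neq\emptyset$ contribute, and for those one uses the vanishing moments of $\psi_I^\sigma$ together with its $C^1$ smoothness to gain the standard off-diagonal decay $|\langle a,\psi_I^\sigma\rangle|\lesssim |Q|^{1/2}|I|^{-1/2}(\ell(Q)/\ell(I))^{1+\varepsilon}(1+\mathrm{dist}(Q,I)/\ell(I))^{-n-1}$ whenever $\ell(I)\geq \ell(Q)$, and a dual estimate when $\ell(I)<\ell(Q)$. Summing a geometric series in scales and using boundedness of the dyadic maximal function produces the uniform estimate $\|\mathcal W_\psi a\|_{L^1}\leq C$.

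For the converse $\|f\|_{H^1}\lesssim \|\mathcal W_\psi f\|_{L^1}$, I would construct an atomic decomposition of $f$ directly from its wavelet expansion $f=\sum_{I,\sigma}\langle f,\psi_I^\sigma\rangle\psi_I^\sigma$. Set $\Omega_k=\{x:\mathcal W_\psi f(x)>2^k\}$, let $\{Q_{k,j}\}_j$ be the maximal dyadic cubes contained in $\widetilde\Omega_k:=\{x:M\chi_{\Omega_k}(x)>1/2\}$ but not in $\widetilde\Omega_{k+1}$, and group the wavelet coefficients by
\begin{equation*}
f=\sum_{k,j}\lambda_{k,j}a_{k,j},\qquad a_{k,j}=\lambda_{k,j}^{-1}\!\!\sum_{\substack{I\subset Q_{k,j}\\ I\not\subset Q_{k+1,j'}\,\forall j'}}\sum_\sigma\langle f,\psi_I^\sigma\rangle\psi_I^\sigma,
\end{equation*}
with $\lambda_{k,j}$ chosen so that each $a_{k,j}$ is (up to a constant) an $H^1$-atom supported in $mQ_{k,j}$. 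The support property follows from the compact support of $\psi$; the mean-value condition follows from $\int\psi^\sigma=0$ for every $\sigma\in E$; and the $L^\infty$ size estimate reduces, by Cauchy-Schwarz, to $\bigl(\sum_{I\subset Q_{k,j}}\sum_\sigma|\langle f,\psi_I^\sigma\rangle|^2/|I|\bigr)^{1/2}\lesssim 2^k|Q_{k,j}|^{1/2}$, which in turn comes from integrating $\mathcal W_\psi f(x)^2$ over the portion of $Q_{k,j}$ outside $\widetilde\Omega_{k+1}$. The sum $\sum_{k,j}|\lambda_{k,j}|\lesssim \sum_k 2^k|\Omega_k|\lesssim \|\mathcal W_\psi f\|_{L^1}$ then gives the desired bound.

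The main obstacle will be the second direction: one has to verify carefully that the grouping above produces genuine atoms with the correct normalization, and that the resulting wavelet series converges to $f$ in $H^1(\bR^n)$ (not merely in $\mathcal S'(\bR^n)$). Handling this rigorously requires controlling the tails of the wavelet expansion via the size of $\mathcal W_\psi f$ on $\Omega_k\setminus\Omega_{k+1}$ and using the $L^2$-convergence inside each $V_j$ of the underlying multiresolution analysis, together with the compact support of $\psi$ to localize everything to $mQ_{k,j}$.
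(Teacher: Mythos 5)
The paper does not prove this theorem at all: it is quoted as a classical result with a pointer to Meyer \cite{Me} and Hern\'andez--Weiss \cite{HW} (the latter's Theorem 5.12 is invoked later, in the proof of Lemma \ref{finite decomposition}). Your outline is essentially the standard proof from those references --- area-function/atomic characterization for the easy direction, and the Calder\'on--Zygmund-type stopping-time construction on the level sets $\Omega_k$ of $\mathcal W_\psi f$ for the converse --- so there is nothing to compare against inside the paper itself; what you propose is the right argument and, carried out carefully, it works.

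Two points in your first direction deserve more care than your sketch gives them. First, the displayed pointwise bound $\sum_{I,\sigma}|\langle a,\psi_I^\sigma\rangle|^2|I|^{-1}\chi_I\leq C|Q|^{-1}\chi_{2mQ}$ is not true as stated (the left-hand side is neither supported in $2mQ$ nor pointwise controlled by $|Q|^{-1}$); what you actually need, and what suffices for the Cauchy--Schwarz step, is the integrated form $\int_{\mathbb R^n}(\mathcal W_\psi a)^2=\sum_{I,\sigma}|\langle a,\psi_I^\sigma\rangle|^2=\|a\|_{L^2}^2\leq C|Q|^{-1}$. Second --- and this is precisely the pitfall this paper spends its introduction warning about --- a uniform bound $\|\mathcal W_\psi a\|_{L^1}\leq C$ over atoms does not by itself yield $\|\mathcal W_\psi f\|_{L^1}\lesssim\|f\|_{H^1}$ for general $f$. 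Here the gap is fixable because $\mathcal W_\psi$ is sublinear, positive, and bounded on $L^2$: for $f\in L^2\cap H^1$ one can take an atomic decomposition converging in $L^2$ as well as in $H^1$, deduce the pointwise a.e.\ domination $\mathcal W_\psi f\leq\sum_j|\lambda_j|\,\mathcal W_\psi a_j$ from the $L^2$-continuity of the coefficient functionals $f\mapsto\langle f,\psi_I^\sigma\rangle$, and then conclude by density (equivalently, invoke the $(1,2)$-atom criterion of \cite{MSV1,YZ}); your estimates on atoms go through verbatim for $(1,2)$-atoms. You should state this passage explicitly rather than leave it implicit. Your second direction is the standard Hern\'andez--Weiss construction and your listed verifications (support from the compact support of $\psi$, cancellation from $\int\psi^\sigma=0$, the $L^2$ size estimate from integrating $(\mathcal W_\psi f)^2$ over $Q_{k,j}\setminus\Omega_{k+1}$ using that each selected $I$ has at least half its measure outside $\Omega_{k+1}$) are exactly the right ones.
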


A function $a\in L^2(\mathbb R^n)$ is called a $\psi$-atom related to the (not necessarily dyadic) cube $R$ if it may be written as
$$a=\sum_{I\subset R}\sum_{\sigma\in E} a_{I,\sigma}\psi_I^\sigma$$
with $\|a\|_{L^2}\leq |R|^{-1/2}$. Remark that $a$ is compactly supported in  $mR$ and has mean $0$, so that it is a classical atom related to $mR$, up to the multiplicative constant $m^{n/2}$. It is standard that an atom is in $H^1(\mathbb R^n)$ with norm bounded by a uniform constant. The atomic decomposition gives the converse.
\begin{Theorem}[Atomic decomposition]\label{atomic decomposition}
There exists a constant $C>0$ such that all functions $f \in H^1(\bR^n)$ can be written as the limit in the distribution sense and in $H^1(\mathbb R^n)$ of an infinite sum
$$f =\sum_{j=1}^\infty \lambda_j a_j$$
with $a_j$ $\psi$-atoms related to some dyadic cubes $R_j$ and $\lambda_j$ constants such that
$$C^{-1}\|f\|_{H^1}\leq \sum_{j=1}^\infty |\lambda_j|\leq C \|f\|_{H^1}.$$
\end{Theorem}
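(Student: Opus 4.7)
The plan is to use the wavelet square function characterization (Theorem \ref{wavelet characterization}) to reduce the problem to a Calder\'on--Zygmund type decomposition of the wavelet coefficients of $f$, after which the coefficients falling into each Whitney-type piece will be assembled into a single $\psi$-atom.

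First, write the wavelet expansion $f=\sum_{I,\sigma}c_{I,\sigma}\psi_I^\sigma$ with $c_{I,\sigma}=\langle f,\psi_I^\sigma\rangle$, so that $\mathcal W_\psi f\in L^1(\bR^n)$ and $\|\mathcal W_\psi f\|_{L^1}\sim\|f\|_{H^1}$. For each $k\in\bZ$ set
$$\Omega_k=\{\mathcal W_\psi f>2^k\}\quad\mbox{and}\quad \widetilde\Omega_k=\{M\chi_{\Omega_k}>(2m^n)^{-1}\},$$
where $M$ is the Hardy--Littlewood maximal operator, so that $|\widetilde\Omega_k|\leq C|\Omega_k|$ by the weak-type $(1,1)$ inequality. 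Decompose each open set $\widetilde\Omega_k$ into its family of maximal dyadic subcubes $\{R_{k,j}\}_j$. To each dyadic cube $I$ with $c_{I,\sigma}\ne 0$ for some $\sigma$, assign the largest integer $k(I)$ for which $mI\subset\widetilde\Omega_{k(I)}$, and let $R_{k(I),j(I)}$ be the unique Whitney cube containing $mI$ at that level; since $I\subset mI\subset R_{k(I),j(I)}$, the functions
$$b_{k,j}=\sum_{\substack{I:\,k(I)=k,\,j(I)=j\\\sigma\in E}}c_{I,\sigma}\psi_I^\sigma$$
have the form required to become $\psi$-atoms relative to $R_{k,j}$ once normalized in $L^2$; the mean-zero condition comes for free from $\int\psi^\sigma\,dx=0$.

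The main estimate is $\|b_{k,j}\|_{L^2}^2\leq C\,2^{2k}|R_{k,j}|$. By Parseval the left-hand side equals $\sum_{I,\sigma}|c_{I,\sigma}|^2$ over the relevant indices. The key point is that $k(I)=k$ forces $mI\not\subset\widetilde\Omega_{k+1}$: picking $x_0\in mI\setminus\widetilde\Omega_{k+1}$ and applying the definition of $M$ at $x_0$ with the cube $mI\ni x_0$, one obtains $|mI\cap\Omega_{k+1}|\leq(2m^n)^{-1}|mI|=|I|/2$, hence $|I\setminus\Omega_{k+1}|\geq|I|/2$. This allows the estimate
$$\sum_{I,\sigma}|c_{I,\sigma}|^2\leq 2\int_{R_{k,j}\setminus\Omega_{k+1}}(\mathcal W_\psi f)^2\,dx\leq 2\cdot 2^{2(k+1)}|R_{k,j}|.$$
Setting $\lambda_{k,j}=C\,2^k|R_{k,j}|$ and $a_{k,j}=b_{k,j}/\lambda_{k,j}$ produces bona fide $\psi$-atoms, and
$$\sum_{k,j}|\lambda_{k,j}|\lesssim\sum_k 2^k|\widetilde\Omega_k|\lesssim\sum_k 2^k|\Omega_k|\lesssim\|\mathcal W_\psi f\|_{L^1}\lesssim\|f\|_{H^1}.$$

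For the lower bound $\|f\|_{H^1}\leq C\sum|\lambda_{k,j}|$, it suffices to observe that any $\psi$-atom $a$ relative to $R$ satisfies $\|\mathcal W_\psi a\|_{L^2}=\|a\|_{L^2}\leq|R|^{-1/2}$ by Parseval and is supported in $R$ in the wavelet sense, whence $\|\mathcal W_\psi a\|_{L^1}\leq|R|^{1/2}\|\mathcal W_\psi a\|_{L^2}\leq 1$ by Cauchy--Schwarz, so $\|a\|_{H^1}\leq C$ by Theorem \ref{wavelet characterization}. Convergence of $\sum_{k,j}\lambda_{k,j}a_{k,j}$ to $f$ in the distributional and $H^1$ senses follows from the unconditional convergence of the wavelet expansion in $H^1$ together with the summability just established. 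The one point that requires real care is the $L^2$ estimate of $b_{k,j}$; this is precisely why one enlarges $\Omega_k$ to $\widetilde\Omega_k$ via the maximal function, in order to guarantee that every dyadic cube $I$ contributing to a given group has its support cube $mI$ meeting the complement of $\Omega_{k+1}$ in a quantitatively controlled way.
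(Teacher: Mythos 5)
Your argument is correct and is essentially the standard Calder\'on--Zygmund decomposition of the wavelet coefficients from \cite{HW} (Section 6.5, Theorem 5.12), which is exactly the proof the paper defers to and later reuses in Lemma \ref{finite decomposition}: the level sets $\Omega_k$ of $\mathcal W_\psi f$, their maximal-function enlargements, the grouping of coefficients by maximal dyadic cubes, and the $L^2$ bound via $|I\setminus\Omega_{k+1}|\geq |I|/2$ all match. One small imprecision: $mI$ need not be contained in a single maximal dyadic cube of $\widetilde\Omega_{k(I)}$ (it can straddle several), so you should assign $I$ to the unique maximal dyadic cube containing $I$ itself; this costs nothing since the definition of a $\psi$-atom related to $R$ only requires the contributing cubes to satisfy $I\subset R$.
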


This theorem  is a small variation of a standard statement which can be found in \cite{HW}, Section 6.5. Remark that the interest of dealing with finite atomic decompositions has been underlined recently, for instance in \cite{MSV1, Ky}.

 Now, we denote by $H^1_{\rm fin}(\mathbb R^n)$ the vector space of all finite linear combinations of $\psi$-atoms, that is,
$$ f=\sum_{j=1}^k \lambda_j a_j,$$
where $a_j$'s are  $\psi$-atoms. Then, the norm of $f$ in $H^1_{\rm fin}(\mathbb R^n)$ is defined by
$$\|f\|_{H^1_{\rm fin}}=\inf\Big\{\sum_{j=1}^k|\lambda_j|: f= \sum_{j=1}^k \lambda_j a_j\Big\}.$$

By the atomic decomposition theorem, the set $H^1_{\rm fin}(\mathbb R^n)$ is dense in $H^1(\mathbb R^n)$ for the norm $\|\cdot\|_{H^1}$.

The following two  wavelet characterizations of $L^p(\mathbb R^n)$, $1<p<\infty$, and $BMO(\mathbb R^n)$ are well-known (see \cite{Me}).

\begin{Theorem}\label{Lebesgue}
Let $1<p<\infty$. Then the norms $$\|f\|_{L^p},\;\|(\sum_I \sum_{\sigma\in E}|\langle f,\psi_I^\sigma\rangle|^2 |I|^{-1}\chi_I)^{1/2}\|_{L^p}\text{ and }\|(\sum_I \sum_{\sigma\in E}|\langle f,\psi_I^\sigma\rangle|^2 (\psi_I^\sigma)^2)^{1/2}\|_{L^p}$$ are equivalent on $L^p(\mathbb R^n)$.
\end{Theorem}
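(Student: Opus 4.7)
The strategy is to reduce the three-way equivalence to two pieces: (i) $\|S_2 f\|_{L^p}\sim \|f\|_{L^p}$, where $S_2 f(x):=\bigl(\sum_{I,\sigma}|\langle f,\psi_I^\sigma\rangle|^2|\psi_I^\sigma(x)|^2\bigr)^{1/2}$, and (ii) $\|S_1 f\|_{L^p}\sim \|S_2 f\|_{L^p}$, where $S_1 f(x):=\bigl(\sum_{I,\sigma}|\langle f,\psi_I^\sigma\rangle|^2|I|^{-1}\chi_I(x)\bigr)^{1/2}$. Both equivalences are known to hold with constants depending only on $p$, $n$, and the wavelet. The case $p=2$ is immediate from Parseval, since $\|\psi_I^\sigma\|_{L^2}=1$ and $\int |I|^{-1}\chi_I dx=1$ jointly give $\|S_1 f\|_{L^2}=\|S_2 f\|_{L^2}=\|f\|_{L^2}$; the extrapolation to general $1<p<\infty$ is achieved by vector-valued Calder\'on--Zygmund theory for (i) and the Fefferman--Stein vector-valued maximal inequality for (ii).

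For (i), I consider the $\ell^2$-valued linear operator $T_2 f=(\langle f,\psi_I^\sigma\rangle\psi_I^\sigma)_{I,\sigma}$, so that $|T_2 f(x)|_{\ell^2}=S_2 f(x)$ and $\|T_2 f\|_{L^2(\ell^2)}=\|f\|_{L^2}$ by orthonormality of the basis. Its operator kernel $K_2(x,y)=(\psi_I^\sigma(x)\psi_I^\sigma(y))_{I,\sigma}$ must be shown to satisfy the $\ell^2$-valued Calder\'on--Zygmund size bound $|K_2(x,y)|_{\ell^2}\leq C|x-y|^{-n}$ and the Lipschitz-type regularity $|K_2(x,y)-K_2(x',y)|_{\ell^2}\leq C|x-x'||x-y|^{-n-1}$ for $2|x-x'|\leq |x-y|$. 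Both estimates follow from the pointwise bounds $\|\psi_I^\sigma\|_\infty\lesssim |I|^{-1/2}$ and $\|\nabla \psi_I^\sigma\|_\infty\lesssim |I|^{-1/2-1/n}$, the compact support in $mI$, and the geometric observation that for $x\neq y$, at each dyadic scale $2^{-j}$ only boundedly many dyadic cubes $I$ satisfy $\{x,y\}\subset mI$, forcing $\ell(I)\gtrsim |x-y|$; summing a geometric series over $j$ delivers the bounds. The vector-valued theorem of Benedek--Calder\'on--Panzone then yields $\|S_2 f\|_{L^p}\lesssim\|f\|_{L^p}$ for $1<p<\infty$. The reverse inequality follows by duality: for $g\in L^{p'}$,
\[
\langle f,g\rangle=\sum_{I,\sigma}\langle f,\psi_I^\sigma\rangle\langle g,\psi_I^\sigma\rangle=\int\sum_{I,\sigma}\langle f,\psi_I^\sigma\rangle\psi_I^\sigma(x)\langle g,\psi_I^\sigma\rangle\psi_I^\sigma(x)\,dx,
\]
so Cauchy--Schwarz in $(I,\sigma)$ and H\"older give $|\langle f,g\rangle|\leq \|S_2 f\|_{L^p}\|S_2 g\|_{L^{p'}}\lesssim \|S_2 f\|_{L^p}\|g\|_{L^{p'}}$.

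For (ii), the upper bound $S_2 f\lesssim \widetilde S_1 f:=\bigl(\sum |\langle f,\psi_I^\sigma\rangle|^2|I|^{-1}\chi_{mI}\bigr)^{1/2}$ follows from $|\psi_I^\sigma(x)|^2\leq C|I|^{-1}\chi_{mI}(x)$. Since $\chi_{mI}(x)\leq C M(\chi_I)(x)$, writing $h_{I,\sigma}=|\langle f,\psi_I^\sigma\rangle||I|^{-1/2}\chi_I$ one obtains $\widetilde S_1 f\lesssim (\sum (Mh_{I,\sigma})^2)^{1/2}$, and the Fefferman--Stein $\ell^2$-valued maximal inequality gives $\|\widetilde S_1 f\|_{L^p}\lesssim \|S_1 f\|_{L^p}$. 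For the reverse, since $\int|\psi_I^\sigma|^2=1$ and $|\psi_I^\sigma|^2\lesssim |I|^{-1}$ on $mI$, Chebyshev shows the set $A_I^\sigma=\{x\in mI:|\psi_I^\sigma(x)|^2\geq (2m^n|I|)^{-1}\}$ satisfies $|A_I^\sigma|\gtrsim |I|$ uniformly. Then for $x\in I$ one has $\chi_I(x)\leq CM(\chi_{A_I^\sigma})(x)$ (the ball of radius comparable to $\ell(I)$ around $x$ contains $A_I^\sigma$), so squaring and using $|I|^{-1}\chi_{A_I^\sigma}\lesssim |\psi_I^\sigma|^2$, another application of Fefferman--Stein yields $\|S_1 f\|_{L^p}\lesssim \|S_2 f\|_{L^p}$.

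\textbf{Main obstacle.} The delicate point is the verification of the vector-valued Calder\'on--Zygmund regularity of $K_2$: one must uniformly control the $\ell^2$-norm of a sum over all dyadic scales of tensor products $\psi_I^\sigma(x)\psi_I^\sigma(y)$ and their gradients. The compact support of $\psi$ is crucial --- without it the scales would not telescope --- and the $\mathcal C^1$-regularity is what provides the Lipschitz estimate at each scale. Once these pointwise kernel bounds are in place, the rest of the argument is standard machinery (Benedek--Calder\'on--Panzone, Fefferman--Stein, duality).
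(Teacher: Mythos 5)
The paper offers no proof of this statement: it is quoted as a well-known result with a citation to Meyer's book \cite{Me}. Your argument is correct and is essentially the standard proof found there --- $\ell^2$-valued Calder\'on--Zygmund theory plus duality for the equivalence of $\|f\|_{L^p}$ with the square function built from $(\psi_I^\sigma)^2$, and the Fefferman--Stein vector-valued maximal inequality in both directions (using $|\psi_I^\sigma|^2\lesssim |I|^{-1}\chi_{mI}$ one way and the lower bound $|A_I^\sigma|\gtrsim |I|$ the other) to pass between the two square functions --- so there is nothing substantive to compare.
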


\begin{Theorem}\label{BMO}
A function $g\in BMO(\mathbb R^n)$ if and only if $$\frac{1}{|R|}\sum_{I\subset R} \sum_{\sigma\in E}|\langle g,\psi_I^\sigma\rangle|^2< \infty$$ for all (not necessarily dyadic)  cubes $R$. Moreover, there exists a constant $C>0$ such that for all $g\in BMO(\mathbb R^n)$,
$$C^{-1}\|g\|_{BMO}\leq \sup_{R}\Big(\frac{1}{|R|}\sum_{I\subset R} \sum_{\sigma\in E}|\langle g,\psi_I^\sigma\rangle|^2\Big)^{1/2}\leq C \|g\|_{BMO},$$
where the supremum is taken over all cubes $R$.
\end{Theorem}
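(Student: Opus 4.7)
The plan is to prove each direction separately, exploiting the compact supports and moment conditions of the wavelets together with Fefferman's $H^1$--$BMO$ duality. Throughout, set
$$N(g):=\sup_R\Bigl(|R|^{-1}\sum_{I\subset R}\sum_{\sigma\in E}|\langle g,\psi_I^\sigma\rangle|^2\Bigr)^{1/2},$$
so that the claim reduces to $N(g)\simeq \|g\|_{BMO}$.

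For the inequality $N(g)\lesssim \|g\|_{BMO}$, fix a cube $R$ and choose a constant $C_m>0$, depending only on the support parameter $m$, such that $\mathrm{supp}(\psi_I^\sigma)\subset mI\subset C_m R$ for every $I\subset R$ and every $\sigma\in E$. Because each $\psi_I^\sigma$ has mean zero and is supported in $C_m R$, one has $\langle g,\psi_I^\sigma\rangle=\langle(g-g_{C_m R})\chi_{C_m R},\psi_I^\sigma\rangle$. Since $\{\psi_I^\sigma\}$ is an orthonormal system in $L^2(\mathbb R^n)$, Bessel's inequality applied to the $L^2$-function $(g-g_{C_m R})\chi_{C_m R}$ yields
$$\sum_{I\subset R}\sum_{\sigma\in E}|\langle g,\psi_I^\sigma\rangle|^2\le \int_{C_m R}|g-g_{C_m R}|^2\,dx\le C|R|\,\|g\|_{BMO}^2,$$
where the last estimate is the $L^2$ form of John--Nirenberg. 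Dividing by $|R|$ and taking the supremum over $R$ gives the desired bound.

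For the converse, assume $N(g)<\infty$. I aim to show that $g$ induces a bounded linear functional on $H^1(\mathbb R^n)$ of norm $\lesssim N(g)$; Fefferman's duality theorem will then force $g\in BMO(\mathbb R^n)$ with $\|g\|_{BMO}\lesssim N(g)$. By Theorem \ref{atomic decomposition} and the density of $H^1_{\rm fin}(\mathbb R^n)$ in $H^1(\mathbb R^n)$, it suffices to bound $|\langle g,a\rangle|$ uniformly over $\psi$-atoms $a$. Given $a=\sum_{I\subset R}\sum_{\sigma\in E}a_{I,\sigma}\psi_I^\sigma$ related to a cube $R$, orthonormality gives $\sum|a_{I,\sigma}|^2=\|a\|_{L^2}^2\le|R|^{-1}$, and $\langle g,a\rangle$ is well defined because $a$ is compactly supported with zero mean. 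Cauchy--Schwarz then produces
$$|\langle g,a\rangle|\le\Bigl(\sum_{I\subset R,\sigma}|a_{I,\sigma}|^2\Bigr)^{1/2}\Bigl(\sum_{I\subset R,\sigma}|\langle g,\psi_I^\sigma\rangle|^2\Bigr)^{1/2}\le|R|^{-1/2}\cdot|R|^{1/2}\,N(g)=N(g),$$
and summing $\sum_j|\lambda_j||\langle g,a_j\rangle|\lesssim\|f\|_{H^1}\,N(g)$ over an atomic decomposition of $f$ concludes the argument.

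The main obstacle is the first direction: one must pair a $BMO$ function $g$ against a wavelet even though $g$ may not lie in $L^2$ globally. The subtraction of the local average $g_{C_m R}$ is crucial, because only then does Bessel's inequality, an $L^2$ statement, apply; the enlargement factor $C_m$ must be fixed once and for all so that every relevant wavelet support stays inside $C_m R$, and John--Nirenberg must then be invoked to pass from the raw $L^2$-oscillation on $C_m R$ to $\|g\|_{BMO}^2$.
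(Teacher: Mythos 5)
The paper does not actually prove this theorem: it is quoted from Meyer's book \cite{Me} as a known fact, so there is no internal proof to compare against. Judged on its own terms, your first direction ($N(g)\lesssim\|g\|_{BMO}$) is correct and is the standard argument: the mean-zero property lets you replace $g$ by $(g-g_{C_mR})\chi_{C_mR}$, Bessel's inequality applies to that $L^2$ function, and John--Nirenberg converts the local $L^2$ oscillation into $\|g\|_{BMO}^2$. No complaints there.

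The converse as written has two gaps. First, the step ``it suffices to bound $|\langle g,a\rangle|$ uniformly over $\psi$-atoms'' is exactly the pitfall this paper's introduction is devoted to: by Bownik's example \cite{Bo}, uniform bounds on atoms do not by themselves yield boundedness on $H^1$, and for an infinite decomposition $f=\sum_j\lambda_ja_j$ the identity $\langle g,f\rangle=\sum_j\lambda_j\langle g,a_j\rangle$ is not even defined a priori for a merely locally integrable $g$. The statement is rescuable here because the $\psi$-atoms are $L^2$-normalized, so you can define the functional on $H^1_{\rm fin}(\mathbb R^n)$ and invoke the equivalence $\|\cdot\|_{H^1_{\rm fin}}\approx\|\cdot\|_{H^1}$ (Lemma \ref{finite decomposition} of this paper, or \cite{MSV1}) before extending by density; but you must say this, not merely appeal to ``density.'' Second, and more substantively: Fefferman duality hands you \emph{some} $h\in BMO(\mathbb R^n)$ with $\langle h,\psi_I^\sigma\rangle=\langle g,\psi_I^\sigma\rangle$ for all $I,\sigma$; it does not hand you $g\in BMO(\mathbb R^n)$. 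To close the argument you must show that a locally integrable function whose wavelet coefficients all vanish is constant, and with the wavelets used here (which satisfy $\int\psi\,dx=\int x\psi\,dx=0$) this is genuinely delicate: every affine function has all wavelet coefficients equal to zero, so $g(x)=x_1$ satisfies your Carleson condition vacuously yet is not in $BMO(\mathbb R^n)$. The clean formulation of the converse is that the Carleson condition on the coefficients produces a $BMO$ function with those coefficients (which your argument does prove, modulo the first gap); identifying that function with the original $g$ requires an additional hypothesis or argument that your proof omits.
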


By Theorem \ref{Lebesgue}, Theorem \ref{BMO} and John-Nirenberg inequality, we obtain the following lemma. The proof is easy and will be omitted.
\begin{Lemma}\label{Nirenberg}
Let $f$ be a $\psi$-atom related to the cube $R$ and $b\in BMO(\mathbb R^n)$. Then, $\sum_{I\subset R} \sum_{\sigma\in E}\langle f,\psi_I^\sigma\rangle \langle b,\psi_I^\sigma\rangle (\psi_I^\sigma)^2\in L^{q}(\mathbb R^n)$ for any $q\in (1,2)$.
\end{Lemma}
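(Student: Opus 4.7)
\noindent\textbf{Proof plan for Lemma \ref{Nirenberg}.}

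Write $h(x) = \sum_{I\subset R}\sum_{\sigma\in E}\langle f,\psi_I^\sigma\rangle\langle b,\psi_I^\sigma\rangle (\psi_I^\sigma(x))^2$ and introduce the two square functions
\[
S_f(x) = \Bigl(\sum_{I\subset R}\sum_{\sigma\in E}|\langle f,\psi_I^\sigma\rangle|^2(\psi_I^\sigma(x))^2\Bigr)^{1/2},\qquad
S_{b,R}(x) = \Bigl(\sum_{I\subset R}\sum_{\sigma\in E}|\langle b,\psi_I^\sigma\rangle|^2(\psi_I^\sigma(x))^2\Bigr)^{1/2}.
\]
The first step is to apply Cauchy--Schwarz term-by-term in the defining sum of $h$ with weight $(\psi_I^\sigma(x))^2$, giving the pointwise bound $|h(x)|\le S_f(x)\,S_{b,R}(x)$.

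Next, I would apply H\"older's inequality in the form
\[
\|h\|_{L^q}^q \;\le\; \int (S_f)^q (S_{b,R})^q\, dx \;\le\; \|S_f\|_{L^2}^{q}\,\|S_{b,R}\|_{L^{2q/(2-q)}}^{q},
\]
using the conjugate pair $2/q$ and $2/(2-q)$, valid because $q\in(1,2)$. For the factor involving $f$, Theorem~\ref{Lebesgue} gives $\|S_f\|_{L^2}\lesssim \|f\|_{L^2}\le |R|^{-1/2}$, since $f$ is a $\psi$-atom related to $R$.

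The main obstacle is controlling $S_{b,R}$ in $L^{2q/(2-q)}$, since $b$ is only in $BMO$ and not in any $L^p$. The key observation is that each $\psi_I^\sigma$ has mean zero, so $\langle b,\psi_I^\sigma\rangle = \langle b-c,\psi_I^\sigma\rangle$ for any constant $c$; choosing $c=(b)_{mR}$ and using that $\psi_I^\sigma$ is supported in $mI\subset mR$ when $I\subset R$, the wavelet coefficients $\langle b,\psi_I^\sigma\rangle_{I\subset R}$ coincide with those of the truncated function $\tilde b = (b-(b)_{mR})\chi_{mR}$. Thus $S_{b,R}$ is dominated pointwise by the full wavelet square function of $\tilde b$, and Theorem~\ref{Lebesgue} yields $\|S_{b,R}\|_{L^p}\lesssim \|\tilde b\|_{L^p}$ for any $1<p<\infty$.

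Finally, the John--Nirenberg inequality bounds $\|\tilde b\|_{L^p}\lesssim \|b\|_{BMO}\,|R|^{1/p}$ for every $p<\infty$. Specializing to $p=2q/(2-q)$ and combining everything gives
\[
\|h\|_{L^q}\;\le\; C\,|R|^{-1/2}\cdot|R|^{(2-q)/(2q)}\|b\|_{BMO} \;=\; C\,|R|^{1/q-1}\|b\|_{BMO}<\infty,
\]
which proves $h\in L^q(\mathbb R^n)$. The two non-routine ingredients are the pointwise Cauchy--Schwarz producing the product of square functions and the reduction from $BMO$ to a John--Nirenberg estimate via the mean-zero property of the wavelets.
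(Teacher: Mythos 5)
Your proof is correct and follows exactly the route the paper indicates: the paper omits the proof, noting only that the lemma follows from Theorem \ref{Lebesgue}, Theorem \ref{BMO} (equivalently, the John--Nirenberg inequality applied to the localized function $(b-b_{mR})\chi_{mR}$) and a Cauchy--Schwarz/H\"older argument, which is precisely what you carry out. The quantitative bound $\|h\|_{L^q}\le C|R|^{1/q-1}\|b\|_{BMO}$ you obtain is the expected one, and all the exponent bookkeeping checks out.
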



\section{Bilinear, subbilinear decompositions and  commutators}

Recall that $\mathcal K$ is the set of all sublinear operators $T$ bounded from $H^1(\mathbb R^n)$ into $L^1(\mathbb R^n)$ satisfying 
$$\|(b-b_Q)Ta\|_{L^1}\leq C \|b\|_{BMO},$$
for all $b\in BMO(\mathbb R^n)$, any  $H^1$-atom  $a$ supported in the cube $Q$, where  $C>0$ a constant independent of $b,a$. This class $\mathcal K$ contains almost all important operators in harmonic analysis: Calder\'on-Zygmund type operators, strongly singular integral operators, multiplier operators,  pseudo-differential operators, maximal type operators, the area integral operator of Lusin, Littlewood-Paley type operators, Marcinkiewicz operators,  maximal Bochner-Riesz operators, etc... (See Section 4).

Here and in what follows the bilinear operator $\mathfrak S$ is defined by
$$\mathfrak S(f,g)= - \sum_I \sum_{\sigma\in E}\langle f,\psi_I^\sigma\rangle \langle g,\psi_I^\sigma\rangle (\psi_I^\sigma)^2.$$

In \cite{BGK}, the authors show that $\mathfrak S$ is a bounded bilinear operator from $ H^1(\mathbb R^n)\times BMO(\mathbb R^n)$ into $L^1(\mathbb R^n)$.

\subsection{Two decomposition theorems and $(H^1_b, L^1)$ type estimates}

Let  $b$  be a locally integrable function and $T\in \mathcal K$. As usual, the (sublinear) commutator $[b,T]$ of the operator $T$ is defined by $[b,T](f)(x):= T\Big((b(x)- b(\cdot))f(\cdot)\Big)(x)$.

\begin{Theorem}[Subbilinear decomposition]\label{new1}
Let $T\in \mathcal K$. There exists a bounded subbilinear operator $\mathfrak R= \mathfrak R_T: H^1(\mathbb R^n)\times BMO(\mathbb R^n)\to L^1(\mathbb R^n)$ such that for all $(f,b)\in H^1(\mathbb R^n)\times BMO(\mathbb R^n)$, we have
$$|T(\mathfrak S(f,b))|- \mathfrak R(f,b)\leq |[b, T](f)|\leq \mathfrak R(f,b) + |T(\mathfrak S(f,b))|.$$
\end{Theorem}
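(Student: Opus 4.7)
The plan is to reduce the theorem to an atom-by-atom pointwise estimate, using the atomic decomposition of $H^1(\mathbb R^n)$ together with the wavelet-based product decomposition of $H^1\times BMO$ established in \cite{BGK}, and then to aggregate via the sublinearity of $T$.

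First, take an atomic decomposition $f=\sum_j \lambda_j a_j$ via Theorem \ref{atomic decomposition}, where each $a_j$ is a $\psi$-atom on a dyadic cube $R_j$ and $\sum_j|\lambda_j|\le C\|f\|_{H^1}$. For each $j$, invoke the BGK decomposition (to be recalled in Section 4) to write
$$(b-b_{R_j})\,a_j \;=\; -\,\mathfrak{S}(a_j,b)+h_j,$$
where $\mathfrak{S}(a_j,b-b_{R_j})=\mathfrak{S}(a_j,b)$ since wavelets are orthogonal to constants, and the remainder $h_j$ lies in $H^1(\mathbb R^n)$ with $\|h_j\|_{H^1}\le C\|b\|_{BMO}$. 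This atom-level identity is meaningful because $(b-b_{R_j})a_j$ is a genuine $L^q$ function for some $q>1$ by John--Nirenberg, so Lemma \ref{Nirenberg} applies.

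Next, at the single-atom level, split $b(x)-b(y)=(b(x)-b_{R_j})-(b(y)-b_{R_j})$ inside the definition $[b,T](a_j)(x)=T((b(x)-b(\cdot))a_j)(x)$ and exploit that sublinearity of $T$ gives $|T(\alpha g)|=|\alpha|\,|Tg|$ for scalar $\alpha$. Two applications of sublinearity yield
$$\bigl||[b,T](a_j)(x)|-|T((b-b_{R_j})a_j)(x)|\bigr|\le |b(x)-b_{R_j}|\,|T(a_j)(x)|,$$
and then, after substituting the BGK identity, the key pointwise atom-level bound
$$\bigl||[b,T](a_j)(x)|-|T(\mathfrak{S}(a_j,b))(x)|\bigr|\le r_j(x),\qquad r_j\;:=\;|b-b_{R_j}|\,|T(a_j)|+|T(h_j)|.$$
I would then define the candidate subbilinear remainder by $\mathfrak{R}(f,b):=\sum_j|\lambda_j|\,r_j$, augmented if necessary by a non-negative aggregation defect (see below). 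Its main part is controlled in $L^1$ by $C\|f\|_{H^1}\|b\|_{BMO}$ using (a) the class $\mathcal{K}$ estimate $\|(b-b_{R_j})T(a_j)\|_{L^1}\le C\|b\|_{BMO}$ and (b) the boundedness $T:H^1\to L^1$ applied to $h_j$, giving $\|T(h_j)\|_{L^1}\le C\|h_j\|_{H^1}\le C\|b\|_{BMO}$.

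Finally, aggregate: the sublinearity of $T$ (applied to $f=\sum_j \lambda_j a_j$ and to $\mathfrak{S}(f,b)=\sum_j\lambda_j\mathfrak{S}(a_j,b)$) yields $|[b,T](f)|\le\sum_j|\lambda_j|\,|[b,T](a_j)|$ and $|T(\mathfrak{S}(f,b))|\le\sum_j|\lambda_j|\,|T(\mathfrak{S}(a_j,b))|$; combining with the atom-level pointwise bound produces both inequalities of the theorem. The main obstacle -- the only place where the sublinear case genuinely departs from the linear case of Theorem \ref{Th1} -- is this aggregation: sublinearity gives one-sided inequalities, so the non-negative defects $\sum_j|\lambda_j|\,|T(\mathfrak{S}(a_j,b))|-|T(\mathfrak{S}(f,b))|$ and its analogue $\sum_j|\lambda_j|\,|[b,T](a_j)|-|[b,T](f)|$ must be absorbed into $\mathfrak{R}$. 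Controlling the $L^1$-norm of these defects reduces to the uniform estimate $\|T(\mathfrak{S}(a_j,b))\|_{L^1}\le C\|b\|_{BMO}$, which follows from the atom-level pointwise bound together with an atomic $L^1$-estimate for $[b,T](a_j)$ obtained from the class $\mathcal{K}$ condition and the $H^1$-control of $h_j$.
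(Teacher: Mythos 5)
Your atom-level work is fine: the identity $(b-b_{R_j})a_j=-\mathfrak S(a_j,b)+h_j$ with $\|h_j\|_{H^1}\le C\|b\|_{BMO}$ does follow from Theorem \ref{Decomposition theorem} together with Remark \ref{remark}, and the pointwise bound $\bigl||[b,T](a_j)|-|T(\mathfrak S(a_j,b))|\bigr|\le r_j$ with $\|r_j\|_{L^1}\le C\|b\|_{BMO}$ is correct. The proof breaks at the aggregation step, exactly where you locate the ``main obstacle.'' Absorbing the defects $\sum_j|\lambda_j|\,|T(\mathfrak S(a_j,b))|-|T(\mathfrak S(f,b))|$ and $\sum_j|\lambda_j|\,|[b,T](a_j)|-|[b,T](f)|$ into $\mathfrak R$ requires the uniform estimates $\|T(\mathfrak S(a_j,b))\|_{L^1}\le C\|b\|_{BMO}$ and $\|[b,T](a_j)\|_{L^1}\le C\|b\|_{BMO}$ over $\psi$-atoms. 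Both are false. A $\psi$-atom $a$ satisfies $\int a\,dx=0$ but not $\int ab\,dx=0$, so $\int\mathfrak S(a,b)\,dx=-\int ab\,dx\neq 0$ in general (Remark \ref{remark}(1)); hence $\mathfrak S(a,b)$ is a compactly supported $L^q$ function with nonzero mean, it is not in $H^1(\mathbb R^n)$, and for, say, a Calder\'on--Zygmund $T$ the image $T(\mathfrak S(a,b))$ decays only like $|x|^{-n}$ and is not integrable. Equivalently: the failure of $[b,T]$ to map $H^1$ into $L^1$ is already exhibited on single atoms, so no uniform atomic $L^1$ bound for $|[b,T](a_j)|$ can hold. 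Your justification of the uniform estimate is in fact circular — you bound $\|T(\mathfrak S(a_j,b))\|_{L^1}$ via $\|[b,T](a_j)\|_{L^1}$, but the only route to the latter passes back through $\|T(\mathfrak S(a_j,b))\|_{L^1}$. (There is also a secondary issue that an $\mathfrak R$ built from a chosen atomic decomposition of $f$ plus decomposition-dependent defects is not obviously a well-defined subbilinear operator, but the quantitative failure above is the fatal one.)

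The paper's proof sidesteps this precisely by never distributing $T(\mathfrak S(f,b))$ or $[b,T](f)$ over atoms. It applies the product decomposition to the \emph{whole} $f$, sets $\mathfrak R(f,b)(x)=|T(b(x)f-\Pi_2(f,b))(x)|+|T(\Pi_1(f,b))(x)|+|T(\Pi_4(f,b))(x)|$, and obtains the sandwich from a single application of sublinearity; the terms $T(\Pi_1)$, $T(\Pi_4)$ are handled globally by Lemmas \ref{Le2}--\ref{Le3}, and only the piece $\mathfrak U(f,b)=|T(b(x)f-\Pi_2(f,b))|$ — which, unlike $|T(\mathfrak S(a,b))|$, \emph{is} uniformly in $L^1$ on atoms — is estimated atomwise via Lemma \ref{finite decomposition}. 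If you want to salvage your scheme, you must keep $\mathfrak S(f,b)$ intact and move your atomic analysis onto the complementary, uniformly integrable part, which is essentially what the paper does.
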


\begin{Corollary}\label{weak type}
Let $T\in \mathcal K$ be such that $T$ is of weak type $(1,1)$. Then, the bilinear operator $\mathfrak P(f,g)= [g,T](f)$ maps continuously $H^1(\mathbb R^n)\times BMO(\mathbb R^n)$  into weak-$L^1(\mathbb R^n)$. In particular,  the commutator $[b,T]$ is of weak type $(H^1,L^1)$ if $b\in BMO(\mathbb R^n)$.
\end{Corollary}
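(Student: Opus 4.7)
The plan is to deduce the weak-$L^1$ bound for the bilinear map $\mathfrak P(f,g)=[g,T](f)$ directly from the subbilinear decomposition of Theorem \ref{new1}, treating the two summands on its right-hand side separately and combining them via the subadditivity of the weak-$L^1$ quasi-norm. No atomic decomposition of $f$ is needed at this stage, which is precisely the advantage of having the decomposition at hand.

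First I would fix $(f,b)\in H^1(\bR^n)\times BMO(\bR^n)$ and invoke the upper inequality in Theorem \ref{new1} to get the pointwise bound $|[b,T](f)|\leq \mathfrak R(f,b)+|T(\mathfrak S(f,b))|$. For the first term, since $\mathfrak R=\mathfrak R_T$ is bounded from $H^1(\bR^n)\times BMO(\bR^n)$ into $L^1(\bR^n)$, the trivial embedding $L^1\hookrightarrow L^{1,\infty}$ yields
$$\|\mathfrak R(f,b)\|_{L^{1,\infty}}\leq \|\mathfrak R(f,b)\|_{L^1}\leq C\|f\|_{H^1}\|b\|_{BMO}.$$
For the second term, I would use that $\mathfrak S$ is bounded from $H^1(\bR^n)\times BMO(\bR^n)$ into $L^1(\bR^n)$ (this is the result of \cite{BGK} recalled just before Theorem \ref{new1}), so that $\mathfrak S(f,b)\in L^1(\bR^n)$ with $\|\mathfrak S(f,b)\|_{L^1}\leq C\|f\|_{H^1}\|b\|_{BMO}$; since $T$ is assumed to be of weak type $(1,1)$, this gives
$$\|T(\mathfrak S(f,b))\|_{L^{1,\infty}}\leq C\|\mathfrak S(f,b)\|_{L^1}\leq C\|f\|_{H^1}\|b\|_{BMO}.$$

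Finally I would combine these two estimates. For any $\lambda>0$, using the pointwise domination above,
$$\lambda\,|\{x:|[b,T](f)(x)|>\lambda\}|\leq \lambda\,|\{\mathfrak R(f,b)>\lambda/2\}|+\lambda\,|\{|T(\mathfrak S(f,b))|>\lambda/2\}|,$$
and each of the two summands is bounded by $2\|f\|_{H^1}\|b\|_{BMO}$ up to a constant, by the two displays above. Taking the supremum over $\lambda>0$ gives $\|\mathfrak P(f,b)\|_{L^{1,\infty}}\leq C\|f\|_{H^1}\|b\|_{BMO}$, which is the claimed continuity of the bilinear operator $\mathfrak P$ from $H^1(\bR^n)\times BMO(\bR^n)$ into $L^{1,\infty}(\bR^n)$. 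Specializing to a fixed $b\in BMO(\bR^n)$ yields the weak type $(H^1,L^1)$ bound for $[b,T]$, which is the second assertion.

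There is no real obstacle here: the whole content is already packed into Theorem \ref{new1}, the boundedness of $\mathfrak S$ from \cite{BGK}, and the assumed weak-$(1,1)$ bound for $T$. The only point worth a word of care is the use of the upper half of the subbilinear inequality (which is a genuine pointwise estimate, not merely an identity), since it is what allows the trivial triangle-type argument in weak-$L^1$ to go through even though $T$ need not be linear.
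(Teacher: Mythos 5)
Your proof is correct and is exactly the argument the paper intends: the corollary is stated without a separate proof precisely because it follows immediately from the upper inequality in Theorem \ref{new1}, the $L^1$-boundedness of $\mathfrak R$ and $\mathfrak S$, the weak $(1,1)$ hypothesis on $T$, and the quasi-subadditivity of the weak-$L^1$ quasi-norm. Nothing is missing.
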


We remark that the class of operators $T\in \mathcal K$ of weak type $(1,1)$ contains Calder\'on-Zygmund operators, strongly singular integral operators, multiplier operators, pseudo-differential operators whose symbols in the H\"ormander class $S^m_{\varrho,\delta}$ with $0<\varrho\leq 1, 0\leq \delta< 1, m\leq -n((1-\varrho)/2 +\max\{0, (\delta-\varrho)/2\})$,  maximal type operators, the area integral operator of Lusin, Littlewood-Paley type operators, Marcinkiewicz operators, maximal Bochner-Riesz operators $T^\delta_*$ with $\delta> (n-1)/2$, etc...

When  $T$ is linear and belongs to $\mathcal K$, we obtain the bilinear decomposition for the linear commutator $[b,T]$ of $f$,  $[b,T](f)= bT(f)- T(bf)$, instead of the subbilinear decomposition as stated in Theorem \ref{new1}.

\begin{Theorem}[Bilinear decomposition]\label{Th1}
Let $T$ be a linear operator in $\mathcal K$. Then, there exists a bounded bilinear operator $\mathfrak R= \mathfrak R_T: H^1(\mathbb R^n)\times BMO(\mathbb R^n)\to L^1(\mathbb R^n)$ such that for all $(f,b)\in H^1(\mathbb R^n)\times BMO(\mathbb R^n)$, we have
$$[b, T](f)= \mathfrak R(f,b) + T(\mathfrak S(f,b)).$$
\end{Theorem}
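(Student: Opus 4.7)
The plan is to reorganize the formal identity $[b,T](f) = bT(f) - T(bf)$ using the wavelet-based bilinear decomposition of the product $bf$ established in \cite{BGK}. That earlier work provides, for $f \in H^1$ and $b \in BMO$, a distributional decomposition
$$bf = -\mathfrak S(f,b) + \mathfrak U(f,b),$$
where $\mathfrak U: H^1(\mathbb R^n) \times BMO(\mathbb R^n) \to H^1(\mathbb R^n)$ is a bounded bilinear operator assembled from the three off-diagonal paraproducts in the wavelet product expansion, while the diagonal piece is precisely $-\mathfrak S(f,b) \in L^1$. Since $T$ is linear and continuous from $H^1$ into $L^1$, applying it to both sides yields
$$T(bf) = -T(\mathfrak S(f,b)) + T(\mathfrak U(f,b)),$$
and $T \circ \mathfrak U$ is a bounded bilinear map into $L^1$.

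To handle the $bT(f)$ piece, I would first work on the dense subspace where $f \in H^1_{\rm fin}(\mathbb R^n)$ is a finite atomic combination $\sum_{j=1}^N \lambda_j a_j$ (with $\psi$-atoms $a_j$ supported in cubes $R_j$) and $b \in L^\infty \cap BMO(\mathbb R^n)$, so that every product is a genuine $L^1$ function. Splitting
$$bT(f) = \sum_{j=1}^N \lambda_j (b - b_{R_j}) T(a_j) + T\bigg(\sum_{j=1}^N \lambda_j b_{R_j}\, a_j\bigg),$$
the hypothesis $T \in \mathcal K$ immediately controls the first sum in $L^1$ by $C \|b\|_{BMO} \sum_j |\lambda_j|$, while the argument of the second $T$ can be rewritten as $bf - \sum_j \lambda_j (b - b_{R_j}) a_j$, whose subtracted piece lies in $L^1$ with norm bounded by the same quantity (via John--Nirenberg, cf.\ Lemma \ref{Nirenberg}). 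Combining with the identity for $T(bf)$ above, a short algebraic rearrangement yields, on this dense class,
$$[b,T](f) - T(\mathfrak S(f,b)) = \mathfrak R(f,b),$$
where $\mathfrak R(f,b)$ is a sum of three manifestly bilinear terms: the $\mathcal K$-controlled sum, a $T$-image of an $L^1$-valued bilinear object, and $-T(\mathfrak U(f,b))$.

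The main obstacle, and the reason the wavelet route is indispensable, is precisely the pitfall flagged by (\ref{equality}) in the introduction: one cannot pass to a limit in atomic manipulations of $\sum_j \lambda_j b_{R_j} T(a_j)$ naively. To circumvent this, I would rewrite each summand of $\mathfrak R$ in terms of the intrinsic wavelet objects $\mathfrak S$ and $\mathfrak U$ from \cite{BGK}, obtaining a closed-form wavelet expression for $\mathfrak R_T(f,b)$ depending only on $(f,b)$ and not on any atomic choice. The bilinear bound $\|\mathfrak R_T(f,b)\|_{L^1} \leq C \|f\|_{H^1}\|b\|_{BMO}$ then follows from the BGK boundedness of $\mathfrak S$ and $\mathfrak U$ combined with the defining estimate of $\mathcal K$, and the identity extends from the dense class $H^1_{\rm fin} \times (L^\infty \cap BMO)$ to all of $H^1(\mathbb R^n) \times BMO(\mathbb R^n)$ by continuity of each constituent, together with the $H^1\to L^1$ continuity of $T$.
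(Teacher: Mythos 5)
Your overall strategy (feed the wavelet decomposition of $bf$ from \cite{BGK} into $[b,T](f)=bTf-T(bf)$, isolate the diagonal term $\mathfrak S$, and control the rest atom by atom using the defining estimate of $\mathcal K$) is the same as the paper's, but the proposal rests on a claim that is false and that hides exactly the difficulty the theorem is about. You assert that the off-diagonal part $\mathfrak U=\Pi_1+\Pi_2+\Pi_4$ is a bounded bilinear operator from $H^1(\mathbb R^n)\times BMO(\mathbb R^n)$ into $H^1(\mathbb R^n)$. This is not true: by Lemma \ref{Le4}, $\Pi_2$ maps $H^1\times BMO^+$ only into $H^{\rm log}(\mathbb R^n)\supsetneq H^1(\mathbb R^n)$, because of the uncontrolled term $\kappa g_R h^{(2)}$ in (\ref{Tri}); indeed, if $\Pi_1+\Pi_2+\Pi_4$ landed in $H^1$ then $fb$ would belong to $L^1+H^1$, which is precisely what \cite{BGK} shows to fail. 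Consequently $T\circ\mathfrak U$ is not a bounded bilinear map into $L^1$, and the first half of your decomposition of $T(bf)$ collapses. The paper's way around this is the atom-wise renormalization: for $f$ a $\psi$-atom on $Q$ one has $b(x)f-\Pi_2(f,b)=(b(x)-b_Q)f-\Pi_2(f,b-b_Q)$, and since $(b-b_Q)_Q=0$ the bad term in (\ref{Tri}) vanishes, so $\Pi_2(f,b-b_Q)\in H^1$ with norm $\le C\|b\|_{BMO}$ (Remark \ref{remark}(3)); only the combination $bTf-T(\Pi_2(f,b))$, not $bTf$ and $T(\Pi_2(f,b))$ separately, is a bounded bilinear object.

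A second, related gap is in your treatment of $bT(f)$: after the rearrangement you are left with $T\bigl(\sum_j\lambda_j(b-b_{R_j})a_j\bigr)$, and you only establish that the argument lies in $L^1$ with norm $\le C\|b\|_{BMO}\sum_j|\lambda_j|$. That does not control its image under $T$, since $T$ is bounded from $H^1$ (not from $L^1$) into $L^1$, and $(b-b_{R_j})a_j$ has no reason to have vanishing mean when $a_j$ is an ordinary atom (it does for P\'erez's $b$-atoms, which is exactly why $\mathcal H^1_b$ is easier). The "closed-form wavelet expression for $\mathfrak R_T$" you promise is never exhibited; the paper's is $\mathfrak R(f,b)=\bigl(bTf-T(\Pi_2(f,b))\bigr)-T(\Pi_1(f,b)+\Pi_4(f,b))$, whose boundedness is proved first for $\psi$-atoms via Lemma \ref{vo} and Remark \ref{remark}(3), then on $H^1_{\rm fin}$ via Lemma \ref{finite decomposition}, then by density. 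Finally, note that your dense class $H^1_{\rm fin}\times(L^\infty\cap BMO)$ is problematic on the second factor: $L^\infty$ is not norm-dense in $BMO$, so the limiting argument in $b$ would need a separate justification; the paper avoids this by never restricting $b$.
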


The following result gives $(H^1_b, L^1)$-type estimates for commutators $[b,T]$ when $T$ belongs to the class $\mathcal K$.

\begin{Theorem}\label{new2}
Let  $b$ be a non-constant $BMO$-function and $T\in \mathcal K$. Then, the  commutator $[b, T]$ maps continuously $H^{1}_b(\mathbb R^n)$  into $L^1(\mathbb R^n)$.
\end{Theorem}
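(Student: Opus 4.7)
The plan is to reduce Theorem \ref{new2} to a direct combination of the subbilinear decomposition established in Theorem \ref{new1} with the characterizations of $H^1_b(\mathbb R^n)$ supplied by Theorem \ref{Bonami}.

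I would start from the pointwise estimate provided by Theorem \ref{new1}, namely
$$|[b,T](f)(x)| \leq \mathfrak R(f,b)(x) + |T(\mathfrak S(f,b))(x)|,$$
integrate in $x$, and then handle the two resulting terms separately. For the first, boundedness of $\mathfrak R : H^1(\mathbb R^n) \times BMO(\mathbb R^n) \to L^1(\mathbb R^n)$ gives
$$\|\mathfrak R(f,b)\|_{L^1} \leq C \|f\|_{H^1} \|b\|_{BMO} \leq C \|f\|_{H^1_b},$$
where the last inequality is nothing but the definition of the $H^1_b$-norm.

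The more delicate term is $\|T(\mathfrak S(f,b))\|_{L^1}$. By hypothesis $T \in \mathcal K$ is bounded from $H^1(\mathbb R^n)$ into $L^1(\mathbb R^n)$, so it suffices to prove that $\mathfrak S(f,b)$ lies in $H^1(\mathbb R^n)$ with
$$\|\mathfrak S(f,b)\|_{H^1} \leq C \|f\|_{H^1_b}.$$
This is precisely the kind of estimate that Theorem \ref{Bonami} should provide: it characterizes $H^1_b(\mathbb R^n)$ in such a way that the membership of $f$ in $H^1_b$ is equivalent to $\mathfrak S(f,b)$ being in $H^1$, with controlled norms. Accepting this as given, the combination of the two bounds yields $\|[b,T](f)\|_{L^1} \leq C \|f\|_{H^1_b}$, which is the desired continuity.

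The main obstacle of the overall strategy is not in this final assembly but rather in establishing Theorem \ref{Bonami} itself: namely, showing that membership in $H^1_b$ forces the ``bad'' part $\mathfrak S(f,b)$ of the product $bf$ (in the sense of \cite{BGK}) to become a genuine $H^1$ function, with $H^1$-norm controlled by $\|f\|_{H^1}\|b\|_{BMO} + \|[b,\mathfrak M](f)\|_{L^1}$. The subtle point is that $\mathfrak S(f,b)$ \emph{a priori} is only in $L^1$, since $\mathfrak S$ is merely bounded from $H^1 \times BMO$ into $L^1$, and one has to exploit the extra information encoded in $[b,\mathfrak M](f) \in L^1$ to upgrade this to an $H^1$ estimate through a careful wavelet/grand-maximal function analysis in the spirit of Theorems \ref{wavelet characterization}, \ref{BMO}, and Lemma \ref{Nirenberg}. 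Once this upgrade is in hand, Theorem \ref{new2} follows immediately.
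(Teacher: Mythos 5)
Your proposal is correct and is essentially identical to the paper's argument: the paper proves Theorem \ref{new2} as an immediate corollary of Theorem \ref{new1} (to bound $\|\mathfrak R(f,b)\|_{L^1}$ and reduce to $\|T(\mathfrak S(f,b))\|_{L^1}$) and Theorem \ref{Bonami} (to get $\|\mathfrak S(f,b)\|_{H^1}\leq C\|f\|_{H^1_b}$), exactly as you do. The only minor remark is that the ``upgrade'' you flag as delicate is, in the paper, itself a direct consequence of applying Theorem \ref{new1} to $T=\mathfrak M$, so no further wavelet analysis is needed at that stage.
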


Remark that in the particular case of $T$ a $1$-Calder\'on-Zygmund operator and $H^{1}_b(\mathbb R^n)$ replaced by  $\mathcal H^{1}_b(\mathbb R^n)$, P\'erez \cite{Pe} proved
\begin{equation}\label{Grellier}
\sup\{\|[b,T](a)\|_{L^1}: a \,\mbox{is a}\; (\infty,b){\rm -atom}\}<\infty.
\end{equation}
Then he concludes that the (linear) commutator $[b,T]$ maps continuously $\mathcal H^{1}_b(\mathbb R^n)$  into $L^1(\mathbb R^n)$. Notice that $\mathcal H^{1}_b(\mathbb R^n)\subset \mathcal H^{1, q}_b(\mathbb R^n)\subset H^1_b(\mathbb R^n)$, $1<q\leq \infty$, and the inclusions are continuous (see Section 5). However, as mentioned in the introduction, Inequality (\ref{Grellier}) does not suffice to conclude that the  (linear) commutator $[b, T]$ is bounded from $\mathcal H^{1}_b(\mathbb R^n)$ to $L^1(\mathbb R^n)$. We should also  point out  that the $(H^1,L^1)$ weak type estimates  and the $(\mathcal H^1_b, L^1)$ type estimates for the (linear) commutators of multiplier operators (see \cite{ZH, LLM, WL}),  strongly singular Calder\'on-Zygmund operators (see \cite{LinL}) and for the (sublinear) commutators of Littlewood-Paley type operators (see \cite{Liu}), Marcinkiewicz operators (see \cite{LX}), maximal Bochner-Riesz operators (see \cite{LLL, LT, LL})  have been studied recently. However,  the authors just prove Inequality (\ref{Perez 1}) (that is Inequality (\ref{Grellier})) and use Equality (\ref{equality}) which leaves a gap as pointed out in the introduction.


\subsection{Boundedness of linear commutators  on Hardy spaces} 

Analogously to Hardy estimates for bilinear operators of Coifman and Grafakos (\cite{CG}; see also \cite{Do}), we  obtain the following strongly bilinear estimates which improve Corollary \ref{weak type}.
\begin{Theorem}\label{bilinear estimates}
Let $T$ be a linear operator in $\mathcal K$. Assume that $A_i, B_i$, $i=1,..., K$, are Calder\'on-Zygmund operators satisfying $A_i 1=A_i^* 1=B_i 1=B_i^* 1=0$, and for every $f$ and $g$ in $L^2(\mathbb R^n)$, 
$$\int_{\mathbb R^n}\Big(\sum_{i=1}^K A_i f .B_i g\Big)dx=0.$$
Then, the bilinear operator $\mathfrak T$, defined by 
$$\mathfrak T(f,g)=\sum_{i=1}^K  [B_i g,T](A_i f),$$
 maps continuously   $H^1(\mathbb R^n)\times BMO(\mathbb R^n)$ into  $L^1(\mathbb R^n)$.
\end{Theorem}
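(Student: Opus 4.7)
My plan is to combine the bilinear decomposition of Theorem \ref{Th1} with a div-curl type Hardy space estimate \`a la Coifman--Grafakos. Applying Theorem \ref{Th1} to each summand $[B_i g, T](A_i f)$ and then summing produces the identity
\begin{equation*}
\mathfrak T(f,g) = \sum_{i=1}^K \mathfrak R_T(A_i f, B_i g) + T\Big(\sum_{i=1}^K \mathfrak S(A_i f, B_i g)\Big).
\end{equation*}
The first sum is routine to control. Since $A_i$ is a Calder\'on--Zygmund operator with $A_i^*1=0$, it is bounded on $H^1(\bR^n)$; since $B_i$ is a Calder\'on--Zygmund operator with $B_i 1=0$, it is bounded on $BMO(\bR^n)$. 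Combining these two facts with the bilinear boundedness of $\mathfrak R_T$ from $H^1(\bR^n)\times BMO(\bR^n)$ into $L^1(\bR^n)$ provided by Theorem \ref{Th1} yields
\begin{equation*}
\Big\|\sum_{i=1}^K \mathfrak R_T(A_i f, B_i g)\Big\|_{L^1} \leq C\,\|f\|_{H^1}\|g\|_{BMO}.
\end{equation*}

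Since $T\in\mathcal K$ maps $H^1(\bR^n)$ continuously into $L^1(\bR^n)$, the second term will be controlled in $L^1(\bR^n)$ as soon as we establish the Hardy-space estimate
\begin{equation*}
\Big\|\sum_{i=1}^K \mathfrak S(A_i f, B_i g)\Big\|_{H^1} \leq C\,\|f\|_{H^1}\|g\|_{BMO}.
\end{equation*}
To prove this, I would invoke the product decomposition of \cite{BGK}: the distributional product $uv$ of $u\in H^1(\bR^n)$ and $v\in BMO(\bR^n)$ may be written as $\mathfrak S(u,v)$ plus three paraproduct-type bilinear operators, each continuous from $H^1(\bR^n)\times BMO(\bR^n)$ into $H^1(\bR^n)$. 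Applied to the pair $(A_i f, B_i g)$ and summed over $i$, this gives
\begin{equation*}
\sum_{i=1}^K \mathfrak S(A_i f, B_i g) = \sum_{i=1}^K (A_i f)(B_i g) - E(f,g),
\end{equation*}
where $E(f,g)$ is, by the cited paraproduct bounds combined with the $H^1$ and $BMO$ boundedness of the $A_i, B_i$, already controlled in $H^1(\bR^n)$ by $\|f\|_{H^1}\|g\|_{BMO}$. The problem thus reduces to showing $\sum_{i=1}^K (A_i f)(B_i g) \in H^1(\bR^n)$ with the same quantitative control.

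This last step is precisely a div-curl style Hardy estimate in the spirit of Coifman--Grafakos \cite{CG} (see also \cite{Do}): the cancellations $A_i 1 = A_i^*1 = B_i 1 = B_i^*1 = 0$ ensure the correct action of the $A_i, B_i$ on atoms and on polynomials, while the hypothesis $\int \sum_i A_i f\cdot B_i g\,dx = 0$ supplies the global cancellation needed for $H^1$-membership. A natural implementation uses the atomic decomposition of Theorem \ref{atomic decomposition}: write $f=\sum_j \lambda_j a_j$ with $\psi$-atoms $a_j$ supported in dyadic cubes $R_j$, and verify that $\sum_{i=1}^K (A_i a_j)(B_i g)$ is an $H^1$-molecule (or, equivalently, that its wavelet square function of Theorem \ref{wavelet characterization} lies in $L^1$) with norm bounded by $C\|g\|_{BMO}$, using the Calder\'on--Zygmund kernel decay of the $A_i, B_i$ away from $R_j$ to control the tail and the joint mean-zero condition on $\sum_i A_i a_j \cdot B_i g$ to supply the cancellation; summing in $j$ via $\sum_j |\lambda_j|\lesssim \|f\|_{H^1}$ closes the estimate. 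The main obstacle is precisely this Coifman--Grafakos type step, because the bilinear cancellation has to be exploited simultaneously across all dyadic scales and across the index $i$: it is the combined interplay of the four kernel cancellation conditions with the integral hypothesis that makes the bilinear form land in $H^1(\bR^n)$ rather than merely in $L^1(\bR^n)$.
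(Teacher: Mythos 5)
Your overall skeleton matches the paper's: apply Theorem \ref{Th1} to each summand, control the $\mathfrak R_T(A_if,B_ig)$ terms by the $H^1$-boundedness of $A_i$ (from $A_i^*1=0$) and the $BMO$-boundedness of $B_i$ (from $B_i1=0$), and reduce everything to the estimate $\|\sum_i \mathfrak S(A_if,B_ig)\|_{H^1}\lesssim \|f\|_{H^1}\|g\|_{BMO}$. That reduction is exactly the paper's Lemma \ref{almost diagonal 2}. But your proposed proof of this key estimate has a genuine gap. You assert that the product decomposition of \cite{BGK} writes $uv$ as $\mathfrak S(u,v)$ plus three paraproducts ``each continuous from $H^1(\bR^n)\times BMO(\bR^n)$ into $H^1(\bR^n)$.'' That is false for $\Pi_2$: Lemma \ref{Le4} only gives that $\Pi_2$ maps $H^1(\bR^n)\times BMO^+(\bR^n)$ into $H^{\rm log}(\bR^n)$, which is strictly larger than $H^1(\bR^n)$; indeed the term $\kappa\, g_R\, h^{(2)}$ in (\ref{Tri}) carries a factor $|g_R|$ that grows logarithmically in the scale and position of $R$, so $\sum_j|\lambda_j|\,|g_{R_j}|$ diverges in general. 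This is precisely the obstruction that forces products of $H^1$ and $BMO$ functions into $H^{\rm log}$ rather than $H^1$, so your error term $E(f,g)$ is not controlled in $H^1$ and the reduction to $\sum_i(A_if)(B_ig)\in H^1$ does not close. The single global cancellation $\int\sum_iA_if\cdot B_ig\,dx=0$ is one scalar condition and cannot repair a cube-by-cube, scale-by-scale divergence in the $\Pi_2$ contribution. A secondary issue: for $f\in H^1$ and $g\in BMO$ the pointwise product $(A_if)(B_ig)$ need not be locally integrable, so the object $\sum_i(A_if)(B_ig)$ would first have to be given a distributional meaning, and Theorem A of \cite{CG} as used in this paper is an $L^p\times L^{p'}$ statement, not an $H^1\times BMO$ one.

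The paper avoids this entirely by proving Lemma \ref{almost diagonal 2} directly on the wavelet side: one expands $\sum_i\mathfrak S(A_if,B_ig)$ in the coefficients $\langle A_i\psi_I^\sigma,\psi_{I''}^{\sigma''}\rangle\langle B_i\psi_{I'}^{\sigma'},\psi_{I''}^{\sigma''}\rangle$, uses the hypothesis in the form $\int\sum_iA_i\psi_I^\sigma\cdot B_i\psi_{I'}^{\sigma'}\,dx=0$ to replace $(\psi_{I''}^{\sigma''})^2$ by $(\psi_{I''}^{\sigma''})^2-(\psi_I^\sigma)^2$, which is an $H^1$ function with only logarithmically growing norm, and then absorbs that logarithm into Meyer's almost-diagonal decay (\ref{Meyer}) via the Frazier--Jawerth machinery. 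If you want to salvage your route, you would need to exploit the bilinear cancellation at the level of each dyadic scale (not just globally), which in effect reproduces the paper's argument.
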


We now give a sufficient condition for the linear commutator $[b, T]$ to map continuously $H^1_b (\mathbb R^n)$ into $h^1(\mathbb R^n)$.
\begin{Theorem}\label{Th3}
Let  $b$ be a non-constant $BMO^{\rm log}$-function and $T$ be a Calder\'on-Zygmund operator with $T1=T^*1=0$.  Then, the linear commutator $[b,T]$ maps continuously $H^{1}_b(\mathbb R^n)$ into $h^1(\mathbb R^n)$.
\end{Theorem}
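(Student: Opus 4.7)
The plan is to start from the bilinear decomposition of Theorem \ref{Th1},
$$[b,T](f) = \mathfrak{R}(f,b) + T(\mathfrak{S}(f,b)),$$
and to show that each term belongs to $h^1(\mathbb{R}^n)$ under the stronger hypothesis $b\in BMO^{\log}(\mathbb{R}^n)$. The $L^1$ part of the statement is already covered by Theorem \ref{new2}, so it remains to prove that the local grand maximal function $\mathfrak{m}$ applied to each piece is integrable.

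The core of the argument is to upgrade, under $b\in BMO^{\log}$, the $L^1$ and $H^{\log}$ conclusions of \cite{BGK} about products in $H^1\times BMO$. Recall that \cite{BGK} writes $f\cdot g=\mathfrak{S}(f,g)+\Pi(f,g)$, where $\Pi(f,g)$ is a sum of three paraproducts landing in $H^{\log}(\mathbb{R}^n)$, and that by \cite{Ky} the dual of $H^{\log}$ is exactly $BMO^{\log}$. The additional factor $|\log r|+\log(e+|a|)$ in the $BMO^{\log}$-norm cancels the logarithmic loss in the $H^{\log}$-estimate of $\Pi$, so that $\Pi(f,b)\in H^1(\mathbb{R}^n)$ and a parallel wavelet argument produces an $h^1$-atomic decomposition of $\mathfrak{S}(f,b)$. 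Hence $\mathfrak{S}(f,b)\in h^1$, and since $T$ is Calderón--Zygmund with $T1=T^*1=0$, $T$ maps $h^1$ continuously into itself (by a standard atomic argument: mean-zero atoms map into $H^1$ via $T^*1=0$, while large-cube $L^\infty$-atoms map into $h^1$ via $T1=0$ combined with the Calderón--Zygmund kernel decay). We conclude $T(\mathfrak{S}(f,b))\in h^1(\mathbb{R}^n)$. For $\mathfrak{R}(f,b)$, the explicit form produced in the proof of Theorem \ref{Th1} is built from paraproduct-type expressions in $(f,b)$ and in $(Tf,b)$, and the same $BMO^{\log}$ upgrade applied term-by-term yields $\mathfrak{R}(f,b)\in h^1(\mathbb{R}^n)$.

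The main obstacle will be producing the $h^1$-atomic decomposition of
$$\mathfrak{S}(f,b) = -\sum_I\sum_{\sigma\in E}\langle f,\psi_I^\sigma\rangle\,\langle b,\psi_I^\sigma\rangle\,(\psi_I^\sigma)^2.$$
For dyadic cubes $I$ of side less than $1$ one needs mean-zero atoms, but $(\psi_I^\sigma)^2$ has nonzero integral; one has to group terms of comparable scale, perform a Calderón--Zygmund stopping-time decomposition on the level sets of the wavelet square function $\mathcal{W}_\psi f$ of Theorem \ref{wavelet characterization}, and use the extra $|\log|I||$ factor afforded by $\|b\|_{BMO^{\log}}$ to sum the resulting atomic norms. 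For cubes of side at least $1$, the grouped pieces serve directly as $L^\infty$-type atoms admissible for the local Hardy space $h^1(\mathbb{R}^n)$. Combining the two regimes and invoking Theorem \ref{Bonami}-type characterizations of $H^1_b$ to translate the assumption $f\in H^1_b$ into the needed $L^1$-control on $[b,\mathfrak{M}](f)$ closes the argument.
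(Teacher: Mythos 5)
Your strategy --- estimating each piece of the bilinear decomposition $[b,T](f)=\mathfrak R(f,b)+T(\mathfrak S(f,b))$ directly in $h^1$ --- is genuinely different from the paper's, which never constructs $h^1$-atoms at all: the paper argues by duality, pairing $[b,T](f_k)$ with $h\in C^\infty_0(\mathbb R^n)$, rewriting the pairing as $\int_{\mathbb R^n} b\,(hTf_k-f_kT^*h)\,dx$, showing via the four paraproducts and Corollary \ref{operator S} that $hTf_k-f_k\big(T^*h-(T^*h)_{\mathbb Q}\big)$ lies in $H^{\rm log}(\mathbb R^n)$ with norm $\lesssim\|f\|_{H^1}\|h\|_{bmo}$, and then invoking the dualities $BMO^{\rm log}=(H^{\rm log})^*$ and $h^1=(vmo)^*$; the hypothesis $f\in H^1_b$ enters only to kill the residual term $(T^*h)_{\mathbb Q}\int bf_k\,dx$ via $\int\mathfrak S(f,b)\,dx=0$. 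Your ``log-cancellation'' heuristic does capture why $BMO^{\rm log}$ is the right class, but as written your argument contains a concretely false step.

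The step that fails is: ``since $T$ is Calder\'on--Zygmund with $T1=T^*1=0$, $T$ maps $h^1$ continuously into itself.'' This is not true. The Hilbert transform satisfies $H1=H^*1=0$ in the paper's sense, yet $H(\chi_{[0,1]})(x)=\tfrac1\pi\log\big|\tfrac{x}{x-1}\big|\sim\tfrac1{\pi x}$ at infinity, so $H$ does not map the $h^1$-atom $\chi_{[0,1]}$ even into $L^1(\mathbb R)$. The condition $T1=0$ gives no extra decay for $Ta$ when $a$ has nonvanishing mean: the kernel bound only yields $|Ta(x)|\lesssim|x-x_0|^{-n}$, which is not integrable at infinity. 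Hence your conclusion $T(\mathfrak S(f,b))\in h^1$ cannot be reached from ``$\mathfrak S(f,b)\in h^1$'', and indeed if your chain were valid it would show $[b,T]:H^1\to L^1$ for every smooth compactly supported $b$, which is false. (This term can be rescued differently: for $f\in H^1_b$, Theorem \ref{Bonami} gives $\mathfrak S(f,b)\in H^1$, and $T^*1=0$ makes $T$ bounded on $H^1$, so $T(\mathfrak S(f,b))\in H^1\subset h^1$ with no use of $BMO^{\rm log}$.) Two further soft spots: the claim that the $BMO^{\rm log}$ gain puts $\Pi_2(f,b)$ in $H^1$ is false as stated, because the coefficient $b_R$ in Lemma \ref{Le4} is still unbounded for $b\in BMO^{\rm log}$ (e.g.\ $b(x)=\log\log(e^e+|x|)$), so one must first normalize $b\mapsto b-b_Q$; and the assertion $\mathfrak R(f,b)\in h^1$ reduces to showing that $(b-b_Q)Ta$ is an $h^1$-molecule uniformly over atoms $a$, where the nonzero mean on a cube of side $r$ costs a factor $\log(1/r)$ that must be absorbed by the $BMO^{\rm log}$ gain --- this is precisely the hard estimate of the whole theorem, and it is asserted rather than proved.
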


The last theorem gives a sufficient condition for the linear commutator $[b, T]$ to map continuously $H^1_b (\mathbb R^n)$ into $H^1(\mathbb R^n)$.
\begin{Theorem}\label{Th4}
Let $b$ be a non-constant $BMO$-function and  $T$ be a Calder\'on-Zygmund operator with $T^*1=T^*b=0$.   Then, the linear commutator $[b,T]$ maps continuously $ H^{1}_b(\mathbb R^n)$ into $H^1(\mathbb R^n)$.
\end{Theorem}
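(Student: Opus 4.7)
My plan is to combine the bilinear decomposition $[b,T](f) = \mathfrak{R}(f,b) + T(\mathfrak{S}(f,b))$ from Theorem \ref{Th1} with a molecular analysis that exploits the double cancellation $T^*1 = T^*b = 0$. Theorem \ref{new2} already provides $[b,T](f) \in L^1(\mathbb{R}^n)$ for $f \in H^1_b(\mathbb{R}^n)$, so everything reduces to upgrading this to an $H^1$-estimate. A first consequence of the hypothesis $T^*1 = 0$ is that $T$ maps $H^1(\mathbb{R}^n)$ into itself boundedly, via the classical molecular criterion; this fact I will use repeatedly.

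The core step is the uniform atomic bound $\|[b,T](a)\|_{H^1} \leq C\|b\|_{BMO}$ for $(q,b)$-atoms $a$ related to cubes $Q$ (for some fixed $q > 1$). I split
$$[b,T](a) = (b-b_Q)T(a) - T((b-b_Q)a).$$
The second piece is easy: $(b-b_Q)a$ is supported in $Q$, has mean zero (since $\int a = \int ab = 0$), and by H\"older and John-Nirenberg satisfies $\|(b-b_Q)a\|_{L^p} \leq C\|b\|_{BMO}|Q|^{1/p-1}$ for an appropriate $p \in (1,q)$; thus $(b-b_Q)a$ is a bounded multiple (by $C\|b\|_{BMO}$) of a classical $(p)$-atom, and the $H^1 \to H^1$ boundedness of $T$ gives $T((b-b_Q)a) \in H^1$ with the desired bound.

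The first piece $(b-b_Q)T(a)$ is the crux of the argument. It lies in $L^1$ by the $\mathcal{K}$-class condition, and crucially $T^*1 = T^*b = 0$ force it to have mean zero:
$$\int (b-b_Q)T(a)\,dx = \int bT(a)\,dx - b_Q\int T(a)\,dx = 0.$$
Using the Calder\'on-Zygmund molecular decay $|T(a)(x)| \leq C|Q|^{\delta/n}|x-x_Q|^{-n-\delta}$ for $x \notin 2Q$, together with the John-Nirenberg estimate $\|(b-b_Q)\chi_{2^k Q}\|_{L^q} \leq C(1+k)\|b\|_{BMO}|2^k Q|^{1/q}$, I decompose $(b-b_Q)T(a)$ along dyadic annuli $E_k = 2^{k+1}Q \setminus 2^k Q$, subtract constant multiples of $\chi_{2^{k+1}Q}$ on each piece to enforce mean zero (the residual telescopes correctly thanks to the global mean-zero condition), and obtain an atomic expansion with atomic norm controlled by $C\|b\|_{BMO}\sum_k(1+k)2^{-k\delta} < \infty$.

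The principal obstacle will be the extension of this $(q,b)$-atomic estimate to all of $H^1_b$, since the latter lacks a known atomic characterization (an open problem explicitly noted in the paper). I would bridge this gap by working through the bilinear decomposition: both $\mathfrak{R}$ and $T \circ \mathfrak{S}$ are continuous bilinear maps $H^1 \times BMO \to L^1$, and on the dense subclass $\mathcal{H}^{1,q}_b \cap H^1_{\rm fin}$ of finite $(q,b)$-atomic combinations the atomic step shows that their sum is actually controlled in $H^1$. A Fatou-type argument based on the grand maximal characterization $\|g\|_{H^1} = \|\mathfrak{M}g\|_{L^1}$, combined with an appropriate approximation of $f \in H^1_b$, should then transfer the $H^1$-valued bound to all of $H^1_b$ and conclude the proof.
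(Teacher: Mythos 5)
Your molecular analysis of $(b-b_Q)T(a)$ is sound and close in spirit to what the paper does (the paper verifies the Taibleson--Weiss $\varepsilon$-molecule condition with $\varepsilon=\delta/(4n)$ rather than summing over dyadic annuli, but both work), and your treatment of $T((b-b_Q)a)$ via the $H^1\to H^1$ boundedness of $T$ is correct. The problem is the architecture: you prove the uniform bound on $(q,b)$-atoms, which only yields boundedness of $[b,T]$ from $\mathcal H^{1,q}_b(\mathbb R^n)$ into $H^1(\mathbb R^n)$. The theorem is about $H^1_b(\mathbb R^n)$, which is defined via the maximal commutator $[b,\mathfrak M]$ and is only known to \emph{contain} $\mathcal H^{1,q}_b(\mathbb R^n)$ (Theorem \ref{compare}); it has no known atomic characterization, and $\mathcal H^{1,q}_b\cap H^1_{\rm fin}$ is not known to be dense in $H^1_b$ for the $H^1_b$-norm. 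Your closing ``Fatou-type argument'' therefore has nothing to act on: for a general $f\in H^1_b$ you have no approximating sequence of finite $(q,b)$-atomic combinations with $\mathcal H^{1,q}_b$-norms controlled by $\|f\|_{H^1_b}$, so the atomic bound cannot be transferred. This is a genuine gap, not a technicality.

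The paper's route avoids $b$-atoms entirely. Starting from $[b,T](f)=\mathfrak R(f,b)+T(\mathfrak S(f,b))$, the term $T(\mathfrak S(f,b))$ is the \emph{only} place where $f\in H^1_b$ is used: by Theorem \ref{Bonami}, $\mathfrak S(f,b)\in H^1$ with $\|\mathfrak S(f,b)\|_{H^1}\lesssim\|f\|_{H^1_b}$, and $T^*1=0$ gives $T:H^1\to H^1$. The remaining part reduces (after peeling off $T(\Pi_1)$ and $T(\Pi_4)$, which land in $H^1$ by Lemmas \ref{Le3} and \ref{Le2}) to showing that $f\mapsto bTf-T(\Pi_2(f,b))$ is bounded on $H^1(\mathbb R^n)$, and this is tested on ordinary $\psi$-atoms, for which the finite atomic decomposition (Lemma \ref{finite decomposition}) and density in $H^1$ are available. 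The key observation you should extract is that your own mean-zero computation
\begin{equation*}
\int_{\mathbb R^n}(b-b_Q)Tf\,dx=\int_{\mathbb R^n}bTf\,dx-b_Q\int_{\mathbb R^n}Tf\,dx=0
\end{equation*}
uses only $T^*b=0$ and $T^*1=0$ and is valid for \emph{any} classical atom $f$, not just a $b$-atom; the second cancellation $\int fb=0$ of a $b$-atom is never needed once the product $bf$ is replaced by $\Pi_2(f,b)$, whose $H^1$-membership comes from Remark \ref{remark}(3) rather than from cancellation of $f$ against $b$. Reorganizing your argument along these lines closes the gap.
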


Observe that the condition $T^*b=0$ is  "necessary" in the sense that if the linear commutator $[b,T]$ maps continuously $ H^{1}_b(\mathbb R^n)$ into $H^1(\mathbb R^n)$, then $\int_{\mathbb R^n}b(x) Ta(x)dx=0$ holds for all $(q,b)$-atoms $a$, $1<q\leq \infty$.

Also, let us give some examples to illustrate the sufficient conditions in Theorem \ref{Th4}. To have  many
examples, let us consider  Euclidean spaces $\mathbb R^n, n \geq 2$. Now, consider all Calder\'on-Zygmund operators $T$ such that $T^*1 = 0$. As the closure of $T(H^1(\mathbb R^n))$ is a proper subset of $H^1(\mathbb R^n)$, by the Hahn-Banach theorem (note that $BMO(\mathbb R^n)$ is the dual of $H^1(\mathbb R^n))$, one may take $b$  a non-constant
$BMO$-function such that $\int_{\mathbb R^n}bTa dx=0$ for all $H^1$-atoms $a$, i.e. $T^*b=0$, and thus $b$ and $T$ satisfy the sufficient condition in Theorem \ref{Th4}.

\section{The class $\mathcal K$ and four bilinear operators on $H^1(\mathbb R^n)\times BMO(\mathbb R^n)$}

\subsection{The class $\mathcal K$}
The purpose of this subsection is to give some examples of  operators in the class $\mathcal K$. More precisely,  the class $\mathcal K$ contains almost all important operators in Harmonic analysis: Calder\'on-Zygmund type operators, strongly singular integral operators, multiplier operators, pseudo-differential operators with symbols in the H\"ormander class $S^m_{\varrho,\delta}$ with $0<\varrho\leq 1, 0\leq \delta< 1, m\leq -n((1-\varrho)/2 +\max\{0, (\delta-\varrho)/2\})$ (see \cite{AH,ABKP}), maximal type operators, the area integral operator of Lusin, Littlewood-Paley type operators, Marcinkiewicz operators, maximal Bochner-Riesz operators $T^\delta_*$ with $\delta> (n-1)/2$ (cf. \cite{Le}), etc... It is well-known that these operators $T$ are bounded from $H^1(\mathbb R^n)$ into $L^1(\mathbb R^n)$. So, in order to establish that these ones are  in the class $\mathcal K$, we just need  to show that 
\begin{equation}\label{last}
\|(b-b_Q)Ta\|_{L^1}\leq C \|b\|_{BMO}
\end{equation}
for all $BMO$-function $b$, $H^1$-atom $a$  related to a cube $Q=Q[x_0,r]$ with constant $C>0$ independent of $b,a$. 

Observe that the nontangential grand maximal operator $\mathfrak M$ belongs to $\mathcal K$ since it satisfies Inequality (\ref{last}) (cf. \cite{St}). We refer also to \cite{HST} for the (sublinear) commutators $[b,M_{\varphi,\alpha}]$ of the maximal operators $M_{\varphi,\alpha}$ --note that $M_{\varphi,0}$ lies in $\mathcal K$--.

Here we  just give the proofs for Calder\'on-Zygmund operators (linear operators) and the area integral operator of Lusin (sublinear operator). For the other operators, we leave the proofs to the interested reader.

First recall that $P(x)=\frac{1}{(1+ |x|^2)^{(n+1)/2}}$ is the Poisson kernel and $u_f(x,t):= f*P_t(x)$ is the Poisson integral of $f$. Then the  area integral operator $S$ of Lusin is defined by
$$S(f)(x)=\left(\int_{\Gamma(x)} |\nabla u_f(y,t)|^2 t^{1-n}dydt\right)^{1/2},$$
where $\Gamma(x)$ is the cone $\{(y,t)\in \mathbb R^{n+1}_+: |y-x|<t\}$ with vertex at $x$, while $\nabla u_f= (\partial u_f/\partial x_1,..., \partial u_f/\partial x_1, \partial u_f/\partial t)$ is the gradient of $u_f$ on $\mathbb R^{n+1}_+=\mathbb R^n\times (0,\infty)$.

\begin{Proposition}
Let $\delta\in (0,1]$ and $T$ be a $\delta$-Calder\'on-Zygmund operator. Then $T$ satisfies Inequality (\ref{last}), and thus $T$ belongs to $ \mathcal K$.
\end{Proposition}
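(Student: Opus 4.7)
The plan is to fix an $H^1$-atom $a$ related to $Q=Q[x_0,r]$ (so $\mathrm{supp}\,a\subset Q$, $\|a\|_{L^\infty}\leq |Q|^{-1}$, $\int a=0$) and bound $\int_{\mathbb R^n}|b(x)-b_Q|\,|Ta(x)|\,dx$ by splitting the integration domain into the local part $2\sqrt n Q$ and the far part $(2\sqrt n Q)^c$, using $L^2$-boundedness of $T$ near $Q$ and the smoothness/cancellation far from $Q$.

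For the local part I would apply Hölder's inequality with exponent $2$:
$$\int_{2\sqrt n Q}|b(x)-b_Q|\,|Ta(x)|\,dx\leq \|(b-b_Q)\chi_{2\sqrt n Q}\|_{L^2}\|Ta\|_{L^2}.$$
Since $T$ is bounded on $L^2$, $\|Ta\|_{L^2}\leq C\|a\|_{L^2}\leq C|Q|^{-1/2}$. By John–Nirenberg, combined with $|b_Q-b_{2\sqrt n Q}|\leq C\|b\|_{BMO}$, one has $\|(b-b_Q)\chi_{2\sqrt n Q}\|_{L^2}\leq C|Q|^{1/2}\|b\|_{BMO}$. Multiplying gives the desired bound $C\|b\|_{BMO}$.

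For the far part I would use the cancellation $\int a=0$ to write, for $x\notin 2\sqrt n Q$ (so that $2|y-x_0|\leq |x-y|$ for all $y\in Q$, up to a harmless enlargement of the cube absorbed in the local estimate),
$$Ta(x)=\int_Q \bigl(K(x,y)-K(x,x_0)\bigr)a(y)\,dy,$$
and then invoke the $\delta$-Calderón–Zygmund smoothness of $K$ together with $\|a\|_{L^1}\leq 1$ to obtain
$$|Ta(x)|\leq C\,\frac{r^\delta}{|x-x_0|^{n+\delta}}.$$
Decomposing $(2\sqrt n Q)^c$ into the dyadic annuli $A_k=\{2^k r<|x-x_0|\leq 2^{k+1}r\}$ and using $|b_{2^{k+1}Q}-b_Q|\leq Ck\|b\|_{BMO}$ yields
$$\int_{A_k}|b(x)-b_Q|\,|Ta(x)|\,dx\leq C(k+1)\,2^{-k\delta}\|b\|_{BMO},$$
and summing the geometric-type series in $k\geq 1$ gives a bound $C\|b\|_{BMO}$.

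The main obstacle is merely bookkeeping of the constants when passing from $b_Q$ to the averages over the dilated cubes $2^{k+1}Q$, and choosing the cutoff radius large enough that the smoothness condition $2|y-x_0|\leq |x-y|$ holds uniformly on the far part; no new ideas are needed. Combining both estimates with the fact, recalled earlier, that every Calderón–Zygmund operator maps $H^1(\mathbb R^n)$ into $L^1(\mathbb R^n)$, one concludes that $T\in\mathcal K$.
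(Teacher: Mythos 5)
Your proposal is correct and follows essentially the same route as the paper: split the integral over a fixed dilate of $Q$ and its complement, use Cauchy--Schwarz with the $L^2$-boundedness of $T$ and John--Nirenberg on the local part, and use the cancellation of $a$ plus the $\delta$-H\"older smoothness of the kernel to get $|Ta(x)|\leq Cr^\delta|x-x_0|^{-n-\delta}$ on the far part. The only cosmetic difference is that you carry out the far-field integral explicitly via dyadic annuli and the bound $|b_{2^{k+1}Q}-b_Q|\leq Ck\|b\|_{BMO}$, whereas the paper simply cites this estimate as classical.
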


\begin{proof}
We cut the integral of $|(b-b_Q)Ta|$ into two parts. By Schwarz inequality and the boundedness of $T$ on $L^2(\mathbb R^n)$, we have
\begin{eqnarray*}
\int_{2Q}|b(x)-b_Q||Ta(x)|dx
&\leq&
C \left(\int_{2Q}|b(x)-b_Q|^2dx\right)^{1/2} \|a\|_{L^2}\\
&\leq&
C \|b\|_{BMO}
\end{eqnarray*}
here one used the fact $|b_{2Q}-b_Q|\leq C \|b\|_{BMO}$. Next, for $x\notin 2Q$, 
\begin{eqnarray*}
|Ta(x)|
&=& \left|\int_{Q} (K(x,y)- K(x,x_0)) a(y)dy\right|\\
&\leq&
C \int_{Q}\frac{|y-x_0|^\delta}{|x-x_0|^{n+\delta}}|a(y)|dy\\
&\leq&
C \frac{r^\delta}{|x-x_0|^{n+\delta}}.
\end{eqnarray*}
Therefore, 
$$\int_{(2Q)^c}|b(x)- b_Q||Ta(x)|dx\leq C\int_{Q^c} |b(x)- b_Q|\frac{r^\delta}{|x-x_0|^{n+\delta}}dx\leq C\|b\|_{BMO},$$
since the last inequality is classical (cf. \cite{St}). This finishes the proof.

\end{proof}

\begin{Corollary}\label{Riesz transforms}
Let $\mathcal R_j, j=1,...,n$, be the classical Riesz transforms. Then, $\mathcal R_j$ belongs to $\mathcal K$ for all $j=1,...,n$.
\end{Corollary}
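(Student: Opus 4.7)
The plan is to reduce this to the preceding Proposition by verifying that each classical Riesz transform $\mathcal{R}_j$ fits the hypothesis of a $\delta$-Calder\'on-Zygmund operator for some $\delta\in(0,1]$. Once that is done, the preceding Proposition immediately yields $\mathcal{R}_j\in\mathcal{K}$, so there is essentially no real work beyond recalling standard facts about the Riesz kernels.

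Concretely, I would proceed in three short steps. First, recall that $\mathcal{R}_j$ is given, for $f\in\mathcal{S}(\mathbb{R}^n)$, by the principal value integral against the kernel
\[
K_j(x,y)=c_n\,\frac{x_j-y_j}{|x-y|^{n+1}},\qquad x\ne y,
\]
and extends to a bounded operator on $L^2(\mathbb{R}^n)$ because its Fourier multiplier $-i\xi_j/|\xi|$ is bounded. Thus $\mathcal{R}_j:\mathcal{S}(\mathbb{R}^n)\to\mathcal{S}'(\mathbb{R}^n)$ admits the required $L^2$-boundedness, and for $x\notin\mathrm{supp}\,f$ one has $\mathcal{R}_j f(x)=\int_{\mathbb{R}^n}K_j(x,y)f(y)\,dy$ in the usual sense.

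Second, I would check that $K_j$ is a $1$-Calder\'on-Zygmund singular integral kernel. The size estimate $|K_j(x,y)|\le c_n|x-y|^{-n}$ is immediate. For the regularity estimate, a direct computation using the mean value theorem for the smooth function $z\mapsto z_j/|z|^{n+1}$ on the segment from $x-y$ to $x'-y$ gives, whenever $2|x-x'|\le|x-y|$,
\[
|K_j(x,y)-K_j(x',y)|+|K_j(y,x)-K_j(y,x')|\le C\,\frac{|x-x'|}{|x-y|^{n+1}},
\]
which is precisely the $\delta$-Calder\'on-Zygmund condition with $\delta=1$.

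Third, with these two points in hand, each $\mathcal{R}_j$ is a $1$-Calder\'on-Zygmund operator in the sense defined in Section~2, so the preceding Proposition applies and yields $\|(b-b_Q)\mathcal{R}_j a\|_{L^1}\le C\|b\|_{BMO}$ for every $BMO$-function $b$ and every $H^1$-atom $a$ associated to a cube $Q$. Combined with the well-known $H^1\to L^1$ boundedness of $\mathcal{R}_j$, this places $\mathcal{R}_j$ in $\mathcal{K}$ for each $j=1,\dots,n$. There is no serious obstacle here; the only mildly technical point is the kernel regularity estimate, which is completely standard and can be cited from any reference on singular integrals (e.g.\ \cite{St}).
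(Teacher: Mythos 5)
Your proposal is correct and follows exactly the route the paper intends: the corollary is stated as an immediate consequence of the preceding Proposition, since each Riesz transform is a classical $1$-Calder\'on-Zygmund operator (bounded on $L^2(\mathbb R^n)$ with the standard kernel size and regularity estimates). The paper gives no separate argument, so your verification of the kernel conditions simply fills in the standard details that the author leaves implicit.
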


\begin{Proposition}
The area integral operator $S$  satisfies Inequality (\ref{last}), and thus $S$ belongs to $ \mathcal K$.
\end{Proposition}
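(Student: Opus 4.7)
The plan is to follow exactly the same template as the proof just given for the Calder\'on-Zygmund case. Write
\[
\int_{\R^n}|b(x)-b_Q||Sa(x)|\,dx=\int_{2Q}+\int_{(2Q)^c},
\]
and handle the two pieces separately. For the local piece, I would use Cauchy-Schwarz together with the John-Nirenberg estimate $\|(b-b_Q)\chi_{2Q}\|_{L^2}\leq C|Q|^{1/2}\|b\|_{BMO}$ and the boundedness of $S$ on $L^2(\R^n)$ (a classical fact), which gives $\|Sa\|_{L^2}\leq C\|a\|_{L^2}\leq C|Q|^{-1/2}$. Multiplied together these yield the bound $C\|b\|_{BMO}$ on the $2Q$ part.

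The real work is the far-field piece, for which I need a pointwise decay estimate of the form $Sa(x)\leq Cr/|x-x_0|^{n+1}$ for $x\notin 2Q$. To obtain it I would use the cancellation $\int a=0$ together with the smoothness of the Poisson kernel to write, for $(y,t)\in\Gamma(x)$,
\[
\nabla u_a(y,t)=\int_Q\bigl(\nabla P_t(y-z)-\nabla P_t(y-x_0)\bigr)a(z)\,dz,
\]
and then apply the mean value theorem on $\nabla P_t$, invoking the standard bound $|D^2P_t(u)|\leq C(t+|u|)^{-(n+2)}$. Since $|z-x_0|\leq r$ on the support of $a$ and $\|a\|_{L^\infty}\leq |Q|^{-1}$, this yields
\[
|\nabla u_a(y,t)|\leq C\,\frac{r}{(t+|y-x_0|)^{n+2}}.
\]

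I would then plug this into the definition of $Sa(x)^2$ and split the cone $\Gamma(x)$ according to whether $t<|x-x_0|/2$ or $t\geq |x-x_0|/2$. In the first region the hypothesis $x\notin 2Q$ forces $|y-x_0|\gtrsim |x-x_0|$, so the denominator is essentially $|x-x_0|^{n+2}$; in the second, $(t+|y-x_0|)^{n+2}\geq t^{n+2}$. The $y$-slice of $\Gamma(x)$ at height $t$ has volume $\simeq t^n$, and a direct computation in each region gives $Sa(x)^2\leq Cr^2/|x-x_0|^{2n+2}$. Taking square roots yields the desired decay. With this in hand, the far-field part is controlled by
\[
\int_{(2Q)^c}|b(x)-b_Q|\,\frac{Cr}{|x-x_0|^{n+1}}\,dx\leq C\|b\|_{BMO},
\]
which is the classical $BMO$ estimate already used in the Calder\'on-Zygmund proof above.

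The main obstacle is the cone-integral estimate leading to $Sa(x)\lesssim r/|x-x_0|^{n+1}$; everything else is either a direct translation of the Calder\'on-Zygmund argument or a standard $L^2$-boundedness fact for $S$. Once this pointwise decay is established, the proposition follows exactly as in the preceding proposition, and the inclusion $S\in\mathcal K$ is immediate from the already-known $H^1\to L^1$ boundedness of $S$ together with~(\ref{last}).
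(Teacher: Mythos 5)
Your proof is correct and follows exactly the same route as the paper: the split into $2Q$ and $(2Q)^c$, Cauchy--Schwarz plus $L^2$-boundedness of $S$ for the local part, and the pointwise decay $Sa(x)\leq Cr/|x-x_0|^{n+1}$ obtained from the cancellation of $a$ and the Poisson kernel difference for the far part. The only difference is that you spell out the cone-integral computation that the paper dismisses as "easy to establish," and your details there are sound.
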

\begin{proof}
We also cut the integral of $|(b-b_Q)S(a)|$ into two parts. By Schwarz inequality and the boundedness of $S$ on $L^2(\mathbb R^n)$, we have
\begin{eqnarray*}
\int_{2Q}|b(x)-b_Q||S(a)(x)|dx
&\leq&
C \left(\int_{2Q}|b(x)-b_Q|^2dx\right)^{1/2} \|a\|_{L^2}\\
&\leq&
C \|b\|_{BMO}.
\end{eqnarray*}
 Next, for $x\notin 2Q$, by using the equality
$$u_a(y,t)=\int_{\mathbb R^n}\frac{1}{t^n}\Big(P\Big(\frac{y-z}{t}\Big)- P\Big(\frac{y-x_0}{t}\Big)\Big)a(z)dz,$$
since $\int_{\mathbb R^n}a(z)dz=0$, it is easy to establish that 
$$S(a)(x)=\left(\int_{\Gamma(x)} |\nabla u_a(y,t)|^2 t^{1-n}dydt\right)^{1/2}\leq C \frac{r}{|x-x_0|^{n+1}}.$$
Therefore, 
$$\int_{(2Q)^c}|b(x)- b_Q||S(a)(x)|dx\leq C\int_{Q^c} |b(x)- b_Q|\frac{r}{|x-x_0|^{n+1}}dx\leq C\|b\|_{BMO},$$
which ends the proof.
\end{proof}

We should point out that the Littlewood-Paley type operators can be viewed as vector-valued Calder\'on-Zygmund operators (see \cite{RRT}). See also \cite{HST}  in the context of vector-valued commutators.

\subsection{Four bilinear operators on $H^1(\mathbb R^n)\times BMO(\mathbb R^n)$} 

We now consider four bilinear operators on $H^1(\mathbb R^n)\times BMO(\mathbb R^n)$ which are fundamental for our bilinear decomposition theorem.

We first state some lemmas whose proofs can be found in \cite{BGK}.

\begin{Lemma}\label{Le1}
The bilinear operator $\Pi_3$ defined on $H^1(\mathbb R^n)\times BMO(\mathbb R^n)$ by
$$\Pi_3(f,g)= \sum_{I}\sum_{\sigma\in E}\langle f, \psi^\sigma_I\rangle \langle g, \psi^\sigma_I\rangle (\psi^\sigma_I)^2$$
 is a bounded bilinear operator from $H^1(\mathbb R^n)\times BMO(\mathbb R^n)$ into $L^1(\mathbb R^n)$. 
\end{Lemma}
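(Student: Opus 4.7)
The plan is to reduce to atoms via Theorem~\ref{atomic decomposition}, then estimate $\|\Pi_3(a,g)\|_{L^1}$ for a $\psi$-atom $a$ by a pointwise Cauchy--Schwarz that separates the wavelet coefficients of $a$ from those of $g$, followed by the wavelet characterizations of $L^2$ and $BMO$ stated in Theorems~\ref{Lebesgue} and \ref{BMO}.

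First I would write $f=\sum_j\lambda_j a_j$ with each $a_j$ a $\psi$-atom related to a cube $R_j$ and $\sum_j|\lambda_j|\leq C\|f\|_{H^1}$. For a single $\psi$-atom $a=\sum_{I\subset R,\,\sigma}a_{I,\sigma}\psi_I^\sigma$ related to $R$, orthonormality of the wavelet basis gives $\langle a,\psi_I^\sigma\rangle=a_{I,\sigma}$ when $I\subset R$ and zero otherwise, together with $\sum_{I\subset R,\,\sigma}|a_{I,\sigma}|^2=\|a\|_{L^2}^2\leq |R|^{-1}$. Moreover, $\Pi_3(a,g)=\sum_{I\subset R,\,\sigma}a_{I,\sigma}\langle g,\psi_I^\sigma\rangle(\psi_I^\sigma)^2$ is supported in $mR$, since $\mathrm{supp}(\psi_I^\sigma)\subset mI$ and $I\subset R$ forces $mI\subset mR$ (an elementary verification using the centers of $I$ and $R$).

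The core estimate is a twofold Cauchy--Schwarz. Pointwise, grouping $|a_{I,\sigma}||\langle g,\psi_I^\sigma\rangle|(\psi_I^\sigma(x))^2=(|a_{I,\sigma}||\psi_I^\sigma(x)|)(|\langle g,\psi_I^\sigma\rangle||\psi_I^\sigma(x)|)$ and applying Cauchy--Schwarz in the index $(I,\sigma)$ yields
\begin{equation*}
|\Pi_3(a,g)(x)|\leq\Bigl(\sum_{I\subset R,\,\sigma}|a_{I,\sigma}|^2(\psi_I^\sigma(x))^2\Bigr)^{1/2}\Bigl(\sum_{I\subset R,\,\sigma}|\langle g,\psi_I^\sigma\rangle|^2(\psi_I^\sigma(x))^2\Bigr)^{1/2}.
\end{equation*}
Integrating over $mR$ and using Cauchy--Schwarz in $L^2(mR)$ reduces the problem to bounding $J_1=\int_{mR}\sum_{I\subset R,\,\sigma}|a_{I,\sigma}|^2(\psi_I^\sigma)^2\,dx$ and $J_2=\int_{mR}\sum_{I\subset R,\,\sigma}|\langle g,\psi_I^\sigma\rangle|^2(\psi_I^\sigma)^2\,dx$. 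Since $\|\psi_I^\sigma\|_{L^2}=1$ and each $\psi_I^\sigma$ in the sums is fully supported in $mR$, one has $J_1=\sum|a_{I,\sigma}|^2=\|a\|_{L^2}^2\leq|R|^{-1}$, and Theorem~\ref{BMO} yields $J_2=\sum_{I\subset R,\,\sigma}|\langle g,\psi_I^\sigma\rangle|^2\leq C|R|\|g\|_{BMO}^2$. Multiplying the two square roots gives the uniform atomic bound $\|\Pi_3(a,g)\|_{L^1}\leq C\|g\|_{BMO}$.

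Summing in the atomic decomposition produces $\|\Pi_3(f,g)\|_{L^1}\leq C\|f\|_{H^1}\|g\|_{BMO}$. The one delicate point, and the main obstacle, is the well-definedness of $\Pi_3$ on $H^1\times BMO$: a generic $BMO$ function has no unconditionally convergent wavelet expansion, so one must interpret the defining series carefully. I would handle this by first defining $\Pi_3(a,g)$ for $a$ a $\psi$-atom (the pairings $\langle g,\psi_I^\sigma\rangle$ are well-defined since $\psi_I^\sigma$ is a smooth compactly supported mean-zero function, and the sum converges absolutely in $L^1$ by the above estimate), then extending to all of $H^1$ by absolute summability along the atomic decomposition; a standard uniqueness argument shows the result is independent of the chosen decomposition and yields a bounded bilinear $\Pi_3\colon H^1(\mathbb R^n)\times BMO(\mathbb R^n)\to L^1(\mathbb R^n)$.
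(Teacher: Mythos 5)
Your proof is correct. The paper itself does not prove Lemma \ref{Le1}: it defers to \cite{BGK}, where the key ingredient is the discrete $H^1$--$BMO$ duality $\sum_{I,\sigma}|\langle f,\psi_I^\sigma\rangle|\,|\langle g,\psi_I^\sigma\rangle|\leq C\|f\|_{H^1}\|g\|_{BMO}$ (the present paper quotes exactly this inequality from \cite{FJ} in its proof of Lemma \ref{almost diagonal 1}); since $\|(\psi_I^\sigma)^2\|_{L^1}=\|\psi_I^\sigma\|_{L^2}^2=1$, the lemma then follows from the triangle inequality applied to the series. What you have written is in effect a self-contained proof of that duality inequality: your atomwise double Cauchy--Schwarz, combined with the Carleson-type bound $\sum_{I\subset R,\,\sigma}|\langle g,\psi_I^\sigma\rangle|^2\leq C|R|\|g\|_{BMO}^2$ from Theorem \ref{BMO}, is precisely the standard argument for it, so the two routes amount to the same estimate packaged differently; yours is more elementary and self-contained, the cited one is shorter given \cite{FJ}. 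One refinement to your last paragraph: no uniqueness argument over atomic decompositions is actually needed. Since $f\mapsto\langle f,\psi_I^\sigma\rangle$ is continuous on $H^1(\mathbb R^n)$ (each $\psi_I^\sigma$ is a bounded, compactly supported, mean-zero $\mathcal C^1$ function, hence in the dual), summing your atomic bound over $j$ yields $\sum_{I,\sigma}|\langle f,\psi_I^\sigma\rangle|\,|\langle g,\psi_I^\sigma\rangle|\leq C\|f\|_{H^1}\|g\|_{BMO}$ directly for every $f\in H^1(\mathbb R^n)$, so the defining series converges absolutely in $L^1(\mathbb R^n)$ and $\Pi_3(f,g)$ is defined intrinsically through the wavelet coefficients of $f$ rather than through a chosen decomposition. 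This is also why your argument is immune to the Bownik-type obstruction (uniform bounds on atoms failing to extend) that the introduction of the paper warns about.
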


Observe that $\mathfrak S(f,g)= -\Pi_3(f,g)$ for all $(f,g)\in H^1(\mathbb R^n)\times BMO(\mathbb R^n)$.

\begin{Lemma}\label{Le2}
The bilinear operator $\Pi_4$, defined on $H^1(\mathbb R^n)\times BMO(\mathbb R^n)$ by
$$\Pi_4(f,g)= \sum_{I, I'}\sum_{\sigma,\sigma'\in E}\langle f, \psi^\sigma_I\rangle \langle g, \psi^{\sigma'}_{I'}\rangle \psi^\sigma_I \psi^{\sigma'}_{I'},$$
the sums being taken over all dyadic cubes $I, I'$ and $\sigma,\sigma'\in E$ such that $(I,\sigma)\ne (I',\sigma')$, is a bounded bilinear operator from $H^1(\mathbb R^n)\times BMO(\mathbb R^n)$ into $H^1(\mathbb R^n)$.
\end{Lemma}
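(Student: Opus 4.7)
My plan is to reduce, via the atomic decomposition (Theorem \ref{atomic decomposition}) and bilinearity in the first variable, to the single-atom estimate
\begin{equation*}
\|\Pi_4(a, g)\|_{H^1} \leq C \|g\|_{BMO}
\end{equation*}
uniformly over all $\psi$-atoms $a$ related to dyadic cubes $R$ and all $g \in BMO(\mathbb R^n)$. I would then verify that, after division by $C\|g\|_{BMO}$, the function $\Pi_4(a, g)$ is a $(1, q)$-atom of $H^1(\mathbb R^n)$ for some $1 < q < 2$, and conclude using the $(1, q)$-atomic characterization of $H^1$ recalled in the introduction.

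Three properties of $\Pi_4(a, g)$ need to be checked. \emph{Locality:} since $\langle a, \psi^\sigma_I\rangle = 0$ unless $I \subset R$ and each product $\psi^\sigma_I \psi^{\sigma'}_{I'}$ is supported in $mI \cap mI'$ (by compactness of the wavelet supports), $\Pi_4(a, g)$ is supported in a fixed enlargement $cR$ of $R$. \emph{Invariance under constants:} because every wavelet has mean zero, $\Pi_4(a, g+\lambda) = \Pi_4(a, g)$ for any constant $\lambda$, so we may replace $g$ by $\widetilde g := g - g_{cR}$. \emph{Mean zero:} the formal identity $a\widetilde g = \Pi_3(a, \widetilde g) + \Pi_4(a, \widetilde g)$ (valid since both $a \in L^2$ and $\widetilde g \in L^2(cR)$ admit convergent wavelet expansions there) gives
\begin{equation*}
\Pi_4(a, g) = a\widetilde g - \Pi_3(a, \widetilde g),
\end{equation*}
and a Parseval computation shows that both $\int a\widetilde g\,dx$ and $\int \Pi_3(a, \widetilde g)\,dx$ equal $\sum_{I,\sigma}\langle a, \psi^\sigma_I\rangle \langle \widetilde g, \psi^\sigma_I\rangle$ (using $\|\psi^\sigma_I\|_{L^2} = 1$), hence $\int \Pi_4(a, g)\,dx = 0$.

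For the quantitative $L^q$-bound, fix $1 < q < 2$ and set $s = 2q/(2-q)$, so that $1/2 + 1/s = 1/q$. John--Nirenberg yields $\|\widetilde g\|_{L^s(cR)} \leq C|cR|^{1/s}\|g\|_{BMO}$, whence H\"older gives
\begin{equation*}
\|a\widetilde g\|_{L^q(cR)} \leq \|a\|_{L^2}\|\widetilde g\|_{L^s(cR)} \leq C|R|^{-1/2}|R|^{1/s}\|g\|_{BMO} = C|R|^{1/q - 1}\|g\|_{BMO}.
\end{equation*}
The analogous $L^q$-bound on $\Pi_3(a, \widetilde g)$ I would obtain by applying the wavelet square-function characterization of $L^q$ (Theorem \ref{Lebesgue}) to the two factors $(\sum_{I,\sigma}|\langle a, \psi^\sigma_I\rangle|^2(\psi^\sigma_I)^2)^{1/2}$ and $(\sum_{I,\sigma}|\langle \widetilde g, \psi^\sigma_I\rangle|^2(\psi^\sigma_I)^2)^{1/2}$, followed again by H\"older. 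Together with the three properties above, this exhibits $\Pi_4(a, g)/(C\|g\|_{BMO})$ as a $(1, q)$-atom, completing the argument.

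The main technical obstacle I expect is precisely the $L^q$-estimate on $\Pi_3(a, \widetilde g)$: since $\Pi_3(a,\widetilde g)$ is not itself a wavelet expansion (the functions $(\psi^\sigma_I)^2$ are not wavelets), one has to combine the wavelet square-function characterizations of $L^q$ for each factor separately via H\"older while preserving the correct $|R|$-scaling. This bookkeeping is the essential technical input inherited from \cite{BGK}.
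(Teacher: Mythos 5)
The paper does not actually prove this lemma: it is one of the statements quoted from \cite{BGK}, where the $H^1$ bound for $\Pi_4$ is obtained by exploiting that the off-diagonal products $\psi^\sigma_I\psi^{\sigma'}_{I'}$ have mean zero (orthonormality) and behave like wavelet building blocks, the coefficients $\langle f,\psi^\sigma_I\rangle\langle g,\psi^{\sigma'}_{I'}\rangle$ being controlled through the square-function characterization of $H^1$ and the Carleson condition of Theorem \ref{BMO}. Your proposal takes a different route, and its central step contains a genuine gap: the identity $a\widetilde g=\Pi_3(a,\widetilde g)+\Pi_4(a,\widetilde g)$ is not correct. It contradicts the paper's own Theorem \ref{Decomposition theorem}, which reads $a\widetilde g=\Pi_1(a,\widetilde g)+\Pi_2(a,\widetilde g)+\Pi_3(a,\widetilde g)+\Pi_4(a,\widetilde g)$; your identity would force $\Pi_1(a,\widetilde g)+\Pi_2(a,\widetilde g)=0$, which fails in general. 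The underlying issue is that multiplying the two wavelet expansions term by term and summing over all pairs of cubes is only a formal rearrangement of the product: the double series does not converge absolutely, and the interactions between different scales must be resummed into the paraproducts $\Pi_1$ and $\Pi_2$ (which involve the scaling functions $\phi_I$); their estimates (Lemmas \ref{Le3} and \ref{Le4}) are precisely the hard part you have bypassed. Consequently your $L^q$ bound for $\Pi_4(a,g)$ via $\|a\widetilde g\|_{L^q}+\|\Pi_3(a,\widetilde g)\|_{L^q}$, and your cancellation argument, both rest on a false premise. A symptom of the problem: $\Pi_4(a,g)=\Pi_4(a,g+\lambda)$ for every constant $\lambda$ (Remark \ref{remark}), whereas $a\widetilde g-\Pi_3(a,\widetilde g)$ changes by $\lambda a$ when the normalizing constant is changed, so the proposed identity could hold for at most one normalization, which you have not identified.

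Two further points would need attention even if the identity were repaired. The cancellation $\int_{\mathbb R^n}\Pi_4(a,g)\,dx=0$ is better obtained directly from $\int_{\mathbb R^n}\psi^\sigma_I\psi^{\sigma'}_{I'}\,dx=0$ for $(I,\sigma)\ne(I',\sigma')$, once convergence in $L^1$ is secured. And the passage from the uniform atom estimate to boundedness on all of $H^1$ cannot be dispatched by ``atomic decomposition and bilinearity'' alone: as the paper stresses in the introduction (following \cite{Bo}), one must either invoke the finite-atomic-norm equivalence (Lemma \ref{finite decomposition}) combined with a limiting argument such as Lemma \ref{weak convergence}, or argue directly on $f\in H^1(\mathbb R^n)$.
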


\begin{Lemma}\label{Le3}
The bilinear operator $\Pi_1$ defined by
$$\Pi_1(a, g)=\sum_{|I|=|I'|}\sum_{\sigma\in E}\langle a, \phi_I\rangle \langle g, \psi^\sigma_{I'}\rangle \phi_I \psi^\sigma_{I'},$$
where $a$ is a $\psi$-atom and $g\in BMO(\mathbb R^n)$, can be extended into a bounded bilinear operator from $H^1(\mathbb R^n)\times BMO(\mathbb R^n)$ into $H^1(\mathbb R^n)$.
\end{Lemma}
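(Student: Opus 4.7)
The plan is to prove the lemma at the atomic level and then extend to $H^1(\mathbb R^n)$ via Theorem \ref{atomic decomposition}. More precisely, I will show that if $a$ is a $\psi$-atom related to a cube $R$ and $g\in BMO(\mathbb R^n)$, then $\Pi_1(a,g)/(C\|g\|_{BMO})$ is, up to a uniform multiplicative constant, a classical $(1,2)$-atom supported in a fixed dilate $c_0R$ of $R$, with constants independent of $a$ and $g$.

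For the support condition: since $\phi_I$ and $\psi^\sigma_{I'}$ are supported in $mI$ and $mI'$ with $|I|=|I'|$, the pointwise product $\phi_I\psi^\sigma_{I'}$ vanishes unless $I'$ lies within a bounded number of translates of $I$ at the same scale. Expanding $a=\sum_{I''\subset R,\,\sigma''}a_{I'',\sigma''}\psi^{\sigma''}_{I''}$ and using the MRA orthogonality $W_{j''}\perp V_j$ whenever $|I|\geq |I''|$, the coefficient $\langle a,\phi_I\rangle$ vanishes unless $|I|<|I''|\leq |R|$ for some $I''\subset R$ appearing in the expansion of $a$; combined with the support-overlap condition, this forces $I,I'\subset c_0R$ for a constant $c_0$ depending only on $m$. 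The cancellation is free: for $|I|=|I'|=2^{-nj}$ one has $\phi_I\in V_j$ and $\psi^\sigma_{I'}\in W_j$, and since $V_j\perp W_j$ in $L^2(\mathbb R^n)$ we get $\int\phi_I(x)\psi^\sigma_{I'}(x)\,dx=\langle\phi_I,\psi^\sigma_{I'}\rangle=0$, so summing term by term yields $\int\Pi_1(a,g)\,dx=0$.

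For the $L^2$-estimate, I reorganise the sum by scale to obtain the classical paraproduct
$$\Pi_1(a,g)=\sum_j (P_ja)(Q_jg),$$
where $P_j$ and $Q_j$ denote the orthogonal projections of $L^2(\mathbb R^n)$ onto $V_j$ and $W_j$ respectively. Since the wavelet coefficients of $g$ form a Carleson sequence by Theorem \ref{BMO} and those of $a$ are square-summable by Theorem \ref{Lebesgue}, the standard paraproduct bound (via Carleson embedding applied to $\{|\langle g,\psi^\sigma_{I'}\rangle|^2/|I'|\}$) gives
$$\|\Pi_1(a,g)\|_{L^2}\leq C\|a\|_{L^2}\|g\|_{BMO}\leq C|R|^{-1/2}\|g\|_{BMO}.$$
Putting the three properties together, $\Pi_1(a,g)/(C\|g\|_{BMO})$ is, up to a uniform constant depending on $c_0$, a classical $(1,2)$-atom associated with $c_0R$, so $\|\Pi_1(a,g)\|_{H^1}\leq C\|g\|_{BMO}$ with $C$ independent of $a$ and $g$.

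For arbitrary $f\in H^1(\mathbb R^n)$, I fix an atomic decomposition $f=\sum_k\lambda_ka_k$ from Theorem \ref{atomic decomposition} with $\sum|\lambda_k|\leq C\|f\|_{H^1}$ and set $\Pi_1(f,g):=\sum_k\lambda_k\Pi_1(a_k,g)$; this series converges in $H^1(\mathbb R^n)$ by the atomic estimate above and satisfies $\|\Pi_1(f,g)\|_{H^1}\leq C\|f\|_{H^1}\|g\|_{BMO}$, with well-definedness of the extension following by the usual argument comparing two atomic representations through their wavelet coefficients. I expect the main obstacle to be the $L^2$-bound of the paraproduct, which rests on the Carleson embedding drawn from Theorem \ref{BMO}; the support and mean-value-zero properties are essentially bookkeeping consequences of the compact support of $\phi,\psi$ and the orthogonality $V_j\perp W_j$.
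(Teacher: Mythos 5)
Your proof is correct and follows essentially the same route as the paper's, which defers Lemma \ref{Le3} to \cite{BGK}: there too one shows that $\Pi_1(a,g)$ is $C\|g\|_{BMO}$ times a classical atom on a fixed dilate of $R$ (support from the compact supports together with the orthogonality $W_{j''}\perp V_j$, cancellation from $V_j\perp W_j$, and the $L^2$ bound from the Carleson-measure content of Theorem \ref{BMO}), and then extends by atomic decomposition. The one point worth tightening, given this paper's own warnings about extending operators defined on atoms, is the well-definedness of the extension: it is cleanest to observe that $\Pi_1(\cdot,g)$ is already a bounded operator on $L^2(\mathbb R^n)$ (the paraproduct estimate holds for arbitrary $f\in L^2(\mathbb R^n)$, not just atoms) and that the decomposition of Theorem \ref{atomic decomposition} converges in $L^2(\mathbb R^n)$ for $f\in H^1(\mathbb R^n)\cap L^2(\mathbb R^n)$, so the atomic definition agrees with the direct wavelet definition on a dense subspace.
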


\begin{Lemma}\label{Le4}
The bilinear operator $\Pi_2$ defined by
$$\Pi_2(a, g)=\sum_{|I|=|I'|}\sum_{\sigma\in E}\langle a, \psi^\sigma_I \rangle \langle g, \phi_{I'}\rangle \psi^\sigma_I \phi_{I'},$$
where $a$ is a $\psi$-atom related to the cube $R$ and $g\in BMO(\mathbb R^n)$, can be extended into a bounded bilinear operator from $H^1(\mathbb R^n)\times BMO^{+}(\mathbb R^n)$ into $H^{\rm log}(\mathbb R^n)$. Furthermore, we can write
\begin{equation}\label{Tri}
    \Pi_2(a,g)= h^{(1)}+ \kappa g_R h^{(2)}
\end{equation}
where $\|h^{(1)}\|_{H^1}\leq C   \|g\|_{\BMO}$, $h^{(2)}$ is an atom related to $mR$, and $\kappa$ a uniform constant, independent of $a$ and $g$.
\end{Lemma}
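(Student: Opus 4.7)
The plan is to group the double sum defining $\Pi_2$ by common scale. Writing $|I|=|I'|=2^{-jn}$, I set
$$Q_j^\sigma f := \sum_{|I|=2^{-jn}} \langle f,\psi_I^\sigma\rangle \psi_I^\sigma, \qquad P_j g := \sum_{|I'|=2^{-jn}} \langle g,\phi_{I'}\rangle \phi_{I'},$$
so that $\Pi_2(a,g) = \sum_{j\in\bZ}\sum_{\sigma\in E} (Q_j^\sigma a)\,(P_j g)$. The structural input from the (MRA) is the partition of unity identity $\sum_{k\in\bZ^n}\phi_{(0,1)^n}(x-k)=1$, a standard property of the Daubechies scaling function under consideration; it gives $P_j(1)=1$ pointwise for every $j$ (a locally finite sum thanks to compact support of $\phi$). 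Equivalently, $\Pi_2(a,1)=a$, since $a=\sum_{j,\sigma}Q_j^\sigma a$ is its own wavelet expansion.

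Splitting $g = g_R + (g - g_R)$ and applying $P_j(g_R)=g_R$ yields the target decomposition
$$\Pi_2(a,g) \;=\; g_R\cdot a \;+\; \Pi_2(a,g-g_R).$$
I would then set $\kappa h^{(2)} := a$: since $a$ is a $\psi$-atom related to $R$ it is supported in $mR$ and, up to the normalization factor $m^{n/2}$, $h^{(2)}$ is a classical $H^1$-atom related to $mR$, while $\kappa=m^{n/2}$ is uniform. It remains to prove $h^{(1)} := \Pi_2(a, g - g_R)$ satisfies $\|h^{(1)}\|_{H^1} \leq C\|g\|_{BMO}$. First, each $Q_j^\sigma a$ is supported in $mR$, hence so is $h^{(1)}$. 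Next, since $W_j \perp V_j$ in the (MRA), one has $\langle Q_j^\sigma a,\, P_j(g-g_R)\rangle_{L^2}=0$ for every $j,\sigma$, yielding $\int h^{(1)} = 0$. Finally, I would invoke the standard wavelet-paraproduct $L^2$-bound to get $\|h^{(1)}\|_{L^2} \leq C\|a\|_{L^2}\|g-g_R\|_{BMO} \leq C|R|^{-1/2}\|g\|_{BMO}$. Combined with support in $mR$ and zero mean, $h^{(1)}/(C\|g\|_{BMO})$ is a classical $(1,2)$-atom related to $mR$, and thus $\|h^{(1)}\|_{H^1} \leq C\|g\|_{BMO}$.

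The extension of $\Pi_2$ to all of $H^1(\bR^n)\times BMO^+(\bR^n)$ then proceeds by bilinearity and Theorem~\ref{atomic decomposition}. For the $H^{\rm log}$ conclusion: $h^{(1)} \in H^1 \hookrightarrow H^{\rm log}$ with $\|h^{(1)}\|_{H^{\rm log}} \leq \|h^{(1)}\|_{H^1} \leq C\|g\|_{BMO}$, while $g_R h^{(2)}$ is a constant multiple of an $H^1$-atom related to $mR$. Using the classical estimate $|g_R| \leq C(|\log|R|| + \log(e + |x_R|))\|g\|_{BMO^+}$ (with $x_R$ the center of $R$) together with the atomic characterization of $H^{\rm log}$ from \cite{Ky, BGK}, one concludes $\|g_R h^{(2)}\|_{H^{\rm log}} \leq C\|g\|_{BMO^+}$, giving the claimed bilinear bound. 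The main obstacle is establishing the $L^2$-paraproduct bound for $\sum_{j,\sigma}(Q_j^\sigma a)P_j(g-g_R)$: this is a standard Littlewood-Paley fact but is where the $BMO$ hypothesis on $g$ is essentially used. A secondary subtlety is interpreting $P_j(g)$ when $g\in BMO\setminus L^2$, which is harmless because $\phi_{I'}$ has compact support and $BMO\subset L^1_{\rm loc}$, so $\langle g,\phi_{I'}\rangle$ and the pointwise sum for $P_j g$ are well-defined.
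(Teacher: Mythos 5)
Your identification of the main term is the same as in \cite{BGK} (to which the paper defers for this lemma): writing $\langle g,\phi_{I'}\rangle=\langle g-g_R,\phi_{I'}\rangle+g_R\langle 1,\phi_{I'}\rangle$ and using the partition of unity $\sum_{k\in\bZ^n}\phi_{(0,1)^n}(\cdot-k)=1$ gives $\Pi_2(a,g_R)=g_Ra$, and your support and mean-zero arguments for $h^{(1)}=\Pi_2(a,g-g_R)$ are correct. The fatal step is the claimed estimate $\|h^{(1)}\|_{L^2}\leq C\|a\|_{L^2}\|g-g_R\|_{BMO}$. It is false. In the paraproduct $\sum_{j,\sigma}(Q_j^\sigma a)\,P_j(g-g_R)$ it is the $BMO$ factor that receives the low-pass projections $P_j$, and $P_j(g-g_R)$ is not uniformly bounded on $mR$: it grows like the number of dyadic scales separating $j$ from the scale of $R$. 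Concretely, in dimension one take $R=[0,1)$, $I_j=[0,2^{-j})$, $a=N^{-1/2}\sum_{j=0}^{N-1}\psi_{I_j}$ (a $\psi$-atom for $R$, $\|a\|_{L^2}=1$) and $g(x)=\max\{0,\log(1/|x|)\}\in BMO(\bR)$. On the support of $\psi_{I_j}$ one has $P_j(g-g_R)=j\log 2+O(1)$, so $h^{(1)}=N^{-1/2}\log 2\sum_{j<N}j\,\psi_{I_j}+E$ with $\|E\|_{L^2}\leq CN^{1/2}$, whence $\|h^{(1)}\|_{L^2}\geq cN^{-1/2}\bigl(\sum_{j<N}j^2\bigr)^{1/2}-CN^{1/2}\geq cN\to\infty$ while $\|a\|_{L^2}\|g\|_{BMO}\leq C$. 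Thus $h^{(1)}$ is in general not a bounded multiple of a single $(1,2)$-atom on $mR$, and no argument routed through a global $L^2$ bound on $h^{(1)}$ can succeed. The bounded $L^2\times BMO\to L^2$ paraproduct is the one in which the $BMO$ function receives the band-limited pieces $Q_j$ (a Carleson-measure bound); that is $\Pi_1$ and Lemma \ref{Le3}, not $\Pi_2$.

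The lemma itself survives this example: the same computation gives $\|h^{(1)}\|_{H^1}\approx N^{-1/2}\|g\|_{BMO}$, because in the wavelet square-function characterization of $H^1$ (Theorem \ref{wavelet characterization}) the logarithmic factors $j$ are absorbed by the geometric decay attached to the scale-$j$ localization. That is the mechanism used in \cite{BGK}: one keeps the sum over scales and estimates the $H^1$ norm of $\Pi_2(a,g-g_R)$ scale by scale, tolerating the logarithmic growth of $P_j(g-g_R)$, instead of exhibiting $h^{(1)}$ as one atom. A secondary issue: your final extension ``by bilinearity and Theorem \ref{atomic decomposition}'' is precisely the kind of step this paper warns against (uniform bounds on atoms do not automatically extend an operator from $H^1$); one must invoke Lemma \ref{finite decomposition} or argue through the wavelet characterization directly.
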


The following remarks are useful in our proofs in Section 6 and Section 7.

\begin{Remark}\label{remark}
\begin{enumerate}
\item If $g\in BMO(\mathbb R^n)$ and $f\in H^1(\mathbb R^n)$ such that $fg\in L^1(\mathbb R^n)$, then
$$\int_{\mathbb R^n} fgdx= -\int_{\mathbb R^n}\mathfrak S(f,g)dx= \sum_{I}\sum_{\sigma\in E}\langle f, \psi^\sigma_I\rangle \langle g, \psi^\sigma_I\rangle.$$

\item For any $(f,g)\in H^1(\mathbb R^n)\times BMO(\mathbb R^n)$ and $c$  a constant, we have
$$\Pi_i(f,g)= \Pi_i(f,g+c)\,, \, i=1,3,4.$$

\item As a consequence of Lemma \ref{Le4},  if $g_R=0$ then Equality (\ref{Tri}) gives that $\Pi_2(a,g)\in H^1(\mathbb R^n)$. Moreover, $\|\Pi_2(a,g)\|_{H^1}\leq C\|g\|_{BMO}$.
\end{enumerate}
\end{Remark}

In \cite{BGK}, the authors have shown the  following decomposition theorem for the product space $H^1(\mathbb R^n)\times BMO(\mathbb R^n)$.

\begin{Theorem}[Decomposition theorem]\label{Decomposition theorem}
Let  $f\in H^1(\mathbb R^n)$ and $g\in BMO(\mathbb R^n)$. Then, we have the following  decomposition
$$fg=\Pi_1(f,g)+ \Pi_2(f,g)+ \Pi_3(f,g)+ \Pi_4(f,g),$$
that is
$$fg=\Pi_1(f,g)+ \Pi_2(f,g)+ \Pi_4(f,g) - \mathfrak S(f,g).$$
\end{Theorem}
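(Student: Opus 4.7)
The natural strategy is to invoke the multiresolution structure and realize $fg$ as a telescoping series. Write $P_j$ for the orthogonal projection onto $V_j$ and $Q_j=P_{j+1}-P_j$ for the projection onto $W_j$; the families $\{\phi_I : |I|=2^{-nj}\}$ and $\{\psi_I^\sigma : |I|=2^{-nj},\sigma\in E\}$ are the corresponding orthonormal bases. For a pair $(f,g)$ in a convenient dense subclass --- for instance $f$ a finite $\psi$-combination (so $f\in H^1_{\mathrm{fin}}$, see Theorem \ref{atomic decomposition}) and $g$ a smooth compactly supported function --- both $P_j f$ and $P_j g$ lie in $L^2$, the product $P_j f\cdot P_j g$ lies in $L^1$, tends to $fg$ in $L^1$ as $j\to\infty$ (using the compact support of $\phi$ and Theorem \ref{Lebesgue}), and tends to $0$ as $j\to-\infty$. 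This yields $fg=\sum_{j\in\bZ}(P_{j+1}f\,P_{j+1}g-P_j f\,P_j g)$ with convergence in $L^1$.

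Using $P_{j+1}=P_j+Q_j$, the $j$-th telescope step splits cleanly as $P_j f\cdot Q_j g+Q_j f\cdot P_j g+Q_j f\cdot Q_j g$. Expanding each block in the scaling/wavelet basis gives $P_j f\cdot Q_j g=\sum_{|I|=|I'|=2^{-nj},\sigma\in E}\langle f,\phi_I\rangle\langle g,\psi_{I'}^\sigma\rangle\phi_I\psi_{I'}^\sigma$, so that summation over $j\in\bZ$ reorganises into the defining sum of $\Pi_1(f,g)$ from Lemma \ref{Le3}; symmetrically, $\sum_j Q_j f\cdot P_j g=\Pi_2(f,g)$ from Lemma \ref{Le4}. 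The third block expands as $Q_j f\cdot Q_j g=\sum_{|I|=|I'|=2^{-nj},\sigma,\sigma'}\langle f,\psi_I^\sigma\rangle\langle g,\psi_{I'}^{\sigma'}\rangle\psi_I^\sigma\psi_{I'}^{\sigma'}$; separating the diagonal $(I,\sigma)=(I',\sigma')$ from the off-diagonal $(I,\sigma)\neq(I',\sigma')$ and summing in $j$ produces $\Pi_3(f,g)+\Pi_4(f,g)$ from Lemmas \ref{Le1}, \ref{Le2}. Concatenating the three contributions proves $fg=\Pi_1(f,g)+\Pi_2(f,g)+\Pi_3(f,g)+\Pi_4(f,g)$ on the dense subclass.

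The extension to arbitrary $(f,g)\in H^1\times BMO$ proceeds from the bilinear continuity statements of Lemmas \ref{Le1}--\ref{Le4}: the right-hand side is an element of $L^1+H^{\log}+H^1$ depending continuously on $(f,g)$ (by Remark \ref{remark}(ii) each of $\Pi_1,\Pi_3,\Pi_4$ only sees $g$ modulo constants, and $\Pi_2$ is controlled through the $BMO^+$ norm). Since both sides of the identity agree on a dense subclass and depend continuously on the data, the equality persists in the sense of distributions on all of $H^1\times BMO$, and the decomposition formula $fg=\Pi_1(f,g)+\Pi_2(f,g)+\Pi_4(f,g)-\mathfrak S(f,g)$ follows by recalling $\mathfrak S=-\Pi_3$.

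The main obstacle is that for generic $(f,g)\in H^1\times BMO$ the pointwise product $fg$ is not a priori a function: the ``raw'' wavelet expansion of the product does not converge absolutely, and the value of any rearrangement depends on the summation order. What rescues the identity is that the telescoping ordering dictated by the scale index $j$ is precisely the one in which all four pieces $\Pi_i$ converge in the targets provided by Lemmas \ref{Le1}--\ref{Le4}. Consequently, the genuine analytic work in \cite{BGK} is hidden inside those four continuity statements --- especially in Lemma \ref{Le4}, the only one landing in $H^{\log}$ rather than in $H^1$ or $L^1$ and requiring the splitting $\Pi_2(a,g)=h^{(1)}+\kappa g_R h^{(2)}$ --- whereas the decomposition itself is then a bookkeeping of the telescoping identity.
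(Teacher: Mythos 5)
The paper itself gives no proof of Theorem \ref{Decomposition theorem}: it is quoted from \cite{BGK}. Your telescoping of $P_{j+1}f\,P_{j+1}g-P_jf\,P_jg$ through the multiresolution analysis is exactly the strategy of \cite{BGK}, and the algebra of the three blocks $P_jf\,Q_jg$, $Q_jf\,P_jg$, $Q_jf\,Q_jg$ is correct. (One bookkeeping remark: your regrouping produces only the \emph{same-scale} off-diagonal wavelet terms, so you are implicitly reading $\Pi_4$ as the sum over $|I|=|I'|$, $(I,\sigma)\ne(I',\sigma')$; that is indeed the definition under which the identity holds and the one used in \cite{BGK}, although Lemma \ref{Le2} as stated above sums over all pairs of dyadic cubes.)

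The genuine gap is in your extension step. You prove the identity for $g\in C^\infty_0(\mathbb R^n)$ and then invoke density of the subclass in $H^1(\mathbb R^n)\times BMO(\mathbb R^n)$ together with the norm continuity of the $\Pi_i$. But $C^\infty_0(\mathbb R^n)$ is \emph{not} dense in $BMO(\mathbb R^n)$: by the paper's own Section 2 its closure is $VMO(\mathbb R^n)$, a proper subspace. So the approximation in the $g$ variable cannot be run, and the lemmas provide no weak-$*$ continuity that could replace it; note also that $\Pi_2$ is only controlled through $\|\cdot\|_{BMO^+}$ and genuinely depends on more than the $BMO$ seminorm. The repair is to avoid approximating $g$ altogether: fix an arbitrary $g\in BMO(\mathbb R^n)$ and take $f=a$ a $\psi$-atom related to a dyadic cube $R$. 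Since $\langle a,\psi_I^\sigma\rangle=0$ unless $I\subset R$, one has $Q_ja=0$ at all scales coarser than that of $R$ and hence $P_ja=0$ there as well; the telescoping therefore starts at the scale of $R$ and the delicate limit as $j\to-\infty$ never arises, while $P_Na\,P_Ng\to ag$ in $L^1(\mathbb R^n)$ because all the functions $P_Na$ are supported in a fixed multiple of $R$ and $P_Ng\to g$ in $L^2_{\rm loc}$. One then extends in the $f$ variable only, via Lemma \ref{finite decomposition} (or Theorem \ref{atomic decomposition}) and the continuity of the $\Pi_i$ in $f$ for fixed $g$, which is legitimate since $H^1_{\rm fin}(\mathbb R^n)$ is dense in $H^1(\mathbb R^n)$. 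Incidentally, the term $\kappa g_Rh^{(2)}$ in Lemma \ref{Le4} is exactly the record of the average of $g$ at the starting scale $R$ that your version of the argument tried to send to zero.
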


\section{The space $H^1_b(\mathbb R^n)$}

Let $b$ be a non-constant $BMO$-function. In this section, we study the space $H^1_b(\mathbb R^n)$. In particular, we give  some characterizations of the space $H^1_b(\mathbb R^n)$ (see Theorem \ref{Bonami}), and  the comparison with the space $\mathcal H^1_b(\mathbb R^n)$ of P\'erez (see Theorem \ref{compare}).

First, let us consider the class $\widetilde{\mathcal K}$ of all $T\in\mathcal K$ such that $T$  characterizes the space $H^1(\mathbb R^n)$, that means $f\in H^1(\mathbb R^n)$ if and only if $Tf\in L^1(\mathbb R^n)$. Clearly, the class $\widetilde{\mathcal K}$ contain the maximal operator $\mathfrak M$, the area integral operator $S$ of Lusin, the Littlewood-Paley $g$-operator (see \cite{FS}), the Littlewood-Paley $g_\lambda^*$-operator  with $\lambda> 3n$ (see \cite{HMY}), etc...

Here and in what  follows, the symbol $f\approx g$ means that $C^{-1}f\leq g\leq C f$ for some constant $C>0$. We obtain the following characterization of $H^1_b(\mathbb R^n)$.

\begin{Theorem}\label{Bonami}
Let $b$ be a non-constant $BMO$-function and $T\in \widetilde{\mathcal K}$. For $f\in H^1(\mathbb R^n)$, the following conditions are equivalent:

i) $f\in H^1_b(\mathbb R^n)$.

ii) $\mathfrak S(f,b)\in H^1(\mathbb R^n)$.

iii)  $[b, \mathcal R_j](f)\in L^1(\mathbb R^n)$ for all $j=1,...,n$.

iv)  $[b,T](f)\in L^1(\mathbb R^n)$.

Furthermore, if one of these conditions is satisfied, then
\begin{eqnarray*}
\|f\|_{H^1_b}&=& \|f\|_{H^1}\|b\|_{BMO}+ \|[b,\mathfrak M](f)\|_{L^1}\\
&\approx& \|f\|_{H^1}\|b\|_{BMO}+ \|\mathfrak S(f,b)\|_{H^1}\\
&\approx& \|f\|_{H^1}\|b\|_{BMO}+ \sum_{j=1}^n \|[b, \mathcal R_j](f)\|_{L^1}\\
&\approx& \|f\|_{H^1}\|b\|_{BMO}+ \|[b,T](f)\|_{L^1},
\end{eqnarray*}
where the constants are independent of $f$ and $b$.
\end{Theorem}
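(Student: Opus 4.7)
The plan is to route all four conditions through the single equivalence (ii): $\mathfrak{S}(f,b)\in H^1(\mathbb R^n)$, using the (sub)bilinear decompositions (Theorem \ref{new1} and Theorem \ref{Th1}) together with the fact that $\mathfrak{S}$ itself is bounded from $H^1\times BMO$ into $L^1$ (so $\mathfrak{S}(f,b)$ is automatically in $L^1$ with $\|\mathfrak{S}(f,b)\|_{L^1}\lesssim\|f\|_{H^1}\|b\|_{BMO}$), and that $\mathfrak{R}=\mathfrak{R}_T$ is bounded from $H^1\times BMO$ into $L^1$ for each $T$ under consideration.

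First I would prove (i)$\Leftrightarrow$(ii). Since $\mathfrak{M}\in\mathcal{K}$, Theorem \ref{new1} applied with $T=\mathfrak{M}$ gives
\begin{equation*}
\bigl|\mathfrak{M}(\mathfrak{S}(f,b))\bigr|-\mathfrak{R}_{\mathfrak{M}}(f,b)\;\leq\;\bigl|[b,\mathfrak{M}](f)\bigr|\;\leq\;\mathfrak{R}_{\mathfrak{M}}(f,b)+\bigl|\mathfrak{M}(\mathfrak{S}(f,b))\bigr|.
\end{equation*}
Because $\|\mathfrak{R}_{\mathfrak{M}}(f,b)\|_{L^1}\lesssim\|f\|_{H^1}\|b\|_{BMO}$, the sum $[b,\mathfrak M](f)$ lies in $L^1$ if and only if $\mathfrak{M}(\mathfrak{S}(f,b))$ does. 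By the grand maximal function definition of $H^1(\mathbb R^n)$, this is exactly $\mathfrak{S}(f,b)\in H^1(\mathbb R^n)$, and the two-sided inequality gives the norm equivalence $\|[b,\mathfrak{M}](f)\|_{L^1}\approx\|\mathfrak{S}(f,b)\|_{H^1}$ modulo the additive term $\|f\|_{H^1}\|b\|_{BMO}$.

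Next I would establish (ii)$\Leftrightarrow$(iv) in the same way, using that each $T\in\widetilde{\mathcal K}$ is in $\mathcal K$ and by definition of $\widetilde{\mathcal K}$ characterises $H^1(\mathbb R^n)$ in the sense that $g\in H^1$ iff $Tg\in L^1$, with comparable norms. Applying Theorem \ref{new1} gives $[b,T](f)\in L^1$ iff $T(\mathfrak{S}(f,b))\in L^1$, hence iff $\mathfrak{S}(f,b)\in H^1$. For (ii)$\Leftrightarrow$(iii) I would appeal to Corollary \ref{Riesz transforms} ($\mathcal R_j\in\mathcal K$) and, since the Riesz transforms are linear, to the stronger Theorem \ref{Th1}:
\begin{equation*}
[b,\mathcal R_j](f)=\mathfrak R_{\mathcal R_j}(f,b)+\mathcal R_j(\mathfrak S(f,b)),\qquad j=1,\ldots,n.
\end{equation*}
Since $\mathfrak R_{\mathcal R_j}(f,b)\in L^1$ with controlled norm, condition (iii) is equivalent to $\mathcal R_j(\mathfrak S(f,b))\in L^1$ for every $j$. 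Combined with the fact (noted above) that $\mathfrak S(f,b)$ is always in $L^1$, the classical Riesz transform characterization of $H^1(\mathbb R^n)$ yields $\mathfrak S(f,b)\in H^1(\mathbb R^n)$, with $\|\mathfrak S(f,b)\|_{H^1}\approx\|\mathfrak S(f,b)\|_{L^1}+\sum_j\|\mathcal R_j\mathfrak S(f,b)\|_{L^1}$, which transfers to the desired norm equivalence.

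The main obstacle I anticipate is not conceptual but bookkeeping: one must verify that in each of the two-sided estimates the ``error term'' $\mathfrak R_T(f,b)$ really is absorbed into $\|f\|_{H^1}\|b\|_{BMO}$ on \emph{both} sides, so that the resulting equivalence of norms is symmetric. This uses crucially that $\mathfrak R_T\colon H^1\times BMO\to L^1$ is bounded (with constant depending only on $T$) for all $T$ we invoke ($\mathfrak M$, $\mathcal R_j$, and the chosen $T\in\widetilde{\mathcal K}$); granted that, the four equivalences and their norm comparisons all follow by chaining (i)$\Leftrightarrow$(ii)$\Leftrightarrow$(iii)$\Leftrightarrow$(iv) through (ii).
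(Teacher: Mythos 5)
Your proposal is correct and follows essentially the same route as the paper: the paper also proves (i)$\Leftrightarrow$(ii), (ii)$\Leftrightarrow$(iii), and (ii)$\Leftrightarrow$(iv) by applying Theorem \ref{new1} with $T=\mathfrak M$, Theorem \ref{Th1} with the Riesz transforms, and Theorem \ref{new1} with $T\in\widetilde{\mathcal K}$, respectively, absorbing the bounded error term $\mathfrak R(f,b)$ exactly as you describe. Your explicit remark that $\mathfrak S(f,b)\in L^1$ a priori (needed for the Riesz transform characterization of $H^1$) is a detail the paper leaves implicit.
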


\begin{Remark}\label{the largest space}
Theorem \ref{new2} and Theorem \ref{Bonami} give that  $[b,T]$ is bounded from $H^1_b(\mathbb R^n)$ to $L^1(\mathbb R^n)$ for every $T$ a Calder\'on-Zygmund singular integral operator. Furthermore, $H^1_b(\mathbb R^n)$  is the largest space having this property.
\end{Remark}

\begin{proof}[Proof of Theorem \ref{Bonami}]
$(i)\Leftrightarrow (ii)$ By Theorem \ref{new1}, there exists a bounded subbilinear operator $\mathfrak R: H^1(\mathbb R^n)\times BMO(\mathbb R^n)\to L^1(\mathbb R^n)$ such that
$$\mathfrak M(\mathfrak S(f,b))- \mathfrak R(f,b)\leq |[b, \mathfrak M](f)|\leq \mathfrak R(f,b) + \mathfrak M(\mathfrak S(f,b)).$$
Consequently, $\mathfrak S(f,b)\in H^1(\mathbb R^n)$ if and only if $[b, \mathfrak M](f)\in L^1(\mathbb R^n)$. Moreover,
$$ \|f\|_{H^1_b}\approx \|f\|_{H^1}\|b\|_{BMO}+  \|\mathfrak S(f,b)\|_{H^1}.$$

 $(ii)\Leftrightarrow (iii)$.  By Theorem \ref{Th1}, there exist $n$ bounded bilinear operators $\mathfrak R_j: H^1(\mathbb R^n)\times BMO(\mathbb R^n)\to L^1(\mathbb R^n)$, $j=1,...,n$, such that
$$[b, \mathcal R_j](f)= \mathfrak R_j(f,b)+ \mathcal R_j(\mathfrak S(f,b)).$$
Consequently, $\mathfrak S(f,b)\in H^1(\mathbb R^n)$ if and only if $[b, \mathcal R_j](f)\in L^1(\mathbb R^n)$ for all $j=1,...,n$. Moreover,

$$ \|f\|_{H^1}\|b\|_{BMO}+  \|\mathfrak S(f,b)\|_{H^1}\approx \|f\|_{H^1}\|b\|_{BMO}+ \sum_{j=1}^n \|[b, \mathcal R_j](f)\|_{L^1}.$$

$(ii)\Leftrightarrow (iv)$.  By Theorem \ref{new1}, there exists a bounded subbilinear operator $\mathfrak R: H^1(\mathbb R^n)\times BMO(\mathbb R^n)\to L^1(\mathbb R^n)$ such that
$$|T(\mathfrak S(f,b))|- \mathfrak R(f,b)\leq |[b, T](f)|\leq \mathfrak R(f,b) + |T(\mathfrak S(f,b))|.$$
Consequently, $\mathfrak S(f,b)\in H^1(\mathbb R^n)$ if and only if $[b, T](f)\in L^1(\mathbb R^n)$ since $T\in \widetilde{\mathcal K}$. Moreover,
$$\|f\|_{H^1}\|b\|_{BMO}+  \|\mathfrak S(f,b)\|_{H^1}\approx \|f\|_{H^1}\|b\|_{BMO}+  \|[b,T](f)\|_{L^1}.$$

\end{proof}

Remark  that the constants in the last equivalence depend on $T$.

The following lemma is an immediate corollary of the weak convergence theorem in $H^1(\mathbb R^n)$ of Jones and Journ\'e. See also \cite{Daf} in the setting of $h^1(\mathbb R^n)$.

\begin{Lemma}\label{weak convergence}
Let $\{f_k\}_{k\geq 1}$ be a bounded sequence  in $H^1(\mathbb R^n)$ (resp., in $h^1(\mathbb R^n)$) such that $f_k$ tends to $f$ in $L^1(\mathbb R^n)$. Then $f$ in $H^1(\mathbb R^n)$ (resp., in $h^1(\mathbb R^n)$), and
$$\|f\|_{H^1}\leq \varliminf_{k\to\infty}\|f_k\|_{H^1}\quad(resp., \|f\|_{h^1}\leq \varliminf_{k\to\infty}\|f_k\|_{h^1}).$$
\end{Lemma}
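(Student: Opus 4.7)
The plan is to reduce the statement to the Jones--Journ\'e weak convergence theorem in $H^1(\mathbb R^n)$ (and, for the local assertion, to its analog for $h^1(\mathbb R^n)$ proved by Dafni in \cite{Daf}). That theorem asserts that whenever $\{g_j\}$ is bounded in $H^1(\mathbb R^n)$ and $g_j\to g$ in $\mathcal S'(\mathbb R^n)$, the limit satisfies $g\in H^1(\mathbb R^n)$ together with $\|g\|_{H^1}\leq \varlimsup_{j\to\infty}\|g_j\|_{H^1}$. Consequently, the proof of Lemma \ref{weak convergence} amounts to verifying that the hypotheses of Jones--Journ\'e do apply, and extracting the correct subsequence so as to obtain $\varliminf$ in the conclusion.

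First, I would observe that convergence in $L^1(\mathbb R^n)$ entails convergence in $\mathcal S'(\mathbb R^n)$: for any test function $\varphi\in\mathcal S(\mathbb R^n)$,
$$\Big|\int_{\mathbb R^n}(f_k-f)\varphi\,dx\Big|\leq \|\varphi\|_{L^\infty}\|f_k-f\|_{L^1}\longrightarrow 0.$$
Second, to deduce the $\varliminf$ inequality I would use a standard extraction: set $L:=\varliminf_{k\to\infty}\|f_k\|_{H^1}$, which is finite by the boundedness hypothesis, and choose a subsequence $\{f_{k_j}\}$ with $\|f_{k_j}\|_{H^1}\to L$. This subsequence still converges to $f$ in $L^1$, hence in $\mathcal S'$ by the preceding remark. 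Applying the Jones--Journ\'e theorem to $\{f_{k_j}\}$ then yields $f\in H^1(\mathbb R^n)$ with $\|f\|_{H^1}\leq L=\varliminf_{k\to\infty}\|f_k\|_{H^1}$.

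The $h^1$ statement follows by running the same two steps verbatim, replacing the Jones--Journ\'e theorem by Dafni's analog for the local Hardy space. The only point that might look delicate is having at hand the precise quantitative form of these two weak-convergence theorems, but both are stated in the literature in exactly the form needed here; no genuine technical obstacle arises. This is precisely why the paper advertises the lemma as an ``immediate corollary'' rather than giving a detailed proof.
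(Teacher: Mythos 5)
Your proposal is correct and follows exactly the route the paper itself indicates: the paper offers no detailed argument, merely declaring the lemma an immediate corollary of the Jones--Journ\'e weak convergence theorem (and of Dafni's $h^1$ analogue), which is precisely what you invoke, with the routine reductions ($L^1$ convergence implies convergence in $\mathcal S'(\mathbb R^n)$, and the subsequence extraction to pass from $\varlimsup$ to $\varliminf$) correctly supplied.
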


\begin{Theorem}\label{compare}
Let $b$ be a non-constant $BMO$-function and $1<q\leq \infty$. Then, $\mathcal H^{1,q}_b(\mathbb R^n)\subset H^1_b(\mathbb R^n)$ and the inclusion is continuous.
\end{Theorem}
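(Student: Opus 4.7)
Let $f \in \mathcal{H}^{1,q}_b(\mathbb{R}^n)$ have an atomic decomposition $f = \sum_{j\geq 1} \lambda_j a_j$ with $(q,b)$-atoms $a_j$ and $\sum_j |\lambda_j| \leq 2\|f\|_{\mathcal{H}^{1,q}_b}$. Since every $(q,b)$-atom is a classical $L^q$-type $H^1$-atom, the series converges in $H^1(\mathbb{R}^n)$ with $\|f\|_{H^1} \leq C\|f\|_{\mathcal{H}^{1,q}_b}$. The work then reduces to controlling $\|[b,\mathfrak{M}](f)\|_{L^1}$, and the plan is: (i) prove the uniform atomic estimate $\|[b,\mathfrak{M}](a)\|_{L^1} \leq C\|b\|_{BMO}$ for every $(q,b)$-atom $a$; (ii) pass to finite partial sums by sublinearity of $\mathfrak M$; and (iii) reach the infinite sum via the characterization of $H^1_b$ in Theorem \ref{Bonami} combined with the weak convergence theorem (Lemma \ref{weak convergence}).

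For step (i), with $a$ an atom related to $Q=Q[x_0,r]$, the sublinearity of $\mathfrak{M}$ gives
$$[b,\mathfrak{M}](a)(x) \leq |b(x)-b_Q|\,\mathfrak{M}(a)(x) + \mathfrak{M}((b-b_Q)a)(x).$$
The first term is integrated over $2Q$ and $(2Q)^c$: on $2Q$, H\"older with exponents $(q',q)$, John--Nirenberg, and the $L^q$-boundedness of $\mathfrak{M}$ combined with $\|a\|_{L^q}\leq |Q|^{1/q-1}$ give $C\|b\|_{BMO}$; on $(2Q)^c$, one uses the standard pointwise decay of $\mathfrak{M}(a)(x)$ in $|x-x_0|$ (derived from $\int a = 0$ and $\|a\|_{L^1}\leq 1$) together with the classical dyadic tail estimate for $|b-b_Q|$. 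For the second term, the crucial observation is that the \emph{two} atomic cancellations $\int a = 0$ and $\int ab = 0$ force $\int (b-b_Q)a = 0$; hence $(b-b_Q)a$ is supported in $Q$ with mean zero, and H\"older plus John--Nirenberg yield $\|(b-b_Q)a\|_{L^r} \leq C\|b\|_{BMO}|Q|^{1/r-1}$ for some $r>1$, making $(b-b_Q)a/(C\|b\|_{BMO})$ a bona fide $L^r$-$H^1$-atom. Thus $\|\mathfrak{M}((b-b_Q)a)\|_{L^1} = \|(b-b_Q)a\|_{H^1} \leq C\|b\|_{BMO}$.

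For step (ii), the sublinearity of $\mathfrak{M}$ yields the pointwise bound $[b,\mathfrak{M}](f_N)(x) \leq \sum_{j\leq N}|\lambda_j|\,[b,\mathfrak{M}](a_j)(x)$ for the partial sum $f_N := \sum_{j=1}^N \lambda_j a_j$, so $\|f_N\|_{H^1_b} \leq C\|b\|_{BMO}\|f\|_{\mathcal{H}^{1,q}_b}$. By Theorem \ref{Bonami}, $\mathfrak{S}(f_N,b) \in H^1$ with the uniform bound $\|\mathfrak{S}(f_N,b)\|_{H^1} \leq C\|b\|_{BMO}\|f\|_{\mathcal{H}^{1,q}_b}$. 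Since $f_N \to f$ in $H^1$ and $\mathfrak{S}$ is bounded from $H^1\times BMO$ into $L^1$ (Lemma \ref{Le1}), $\mathfrak{S}(f_N,b) \to \mathfrak{S}(f,b)$ in $L^1$, and Lemma \ref{weak convergence} produces $\mathfrak{S}(f,b) \in H^1$ with
$$\|\mathfrak{S}(f,b)\|_{H^1} \leq \varliminf_{N\to\infty} \|\mathfrak{S}(f_N,b)\|_{H^1} \leq C\|b\|_{BMO}\|f\|_{\mathcal{H}^{1,q}_b}.$$
A final application of Theorem \ref{Bonami} gives $f \in H^1_b(\mathbb{R}^n)$ with $\|f\|_{H^1_b} \leq C\|b\|_{BMO}\|f\|_{\mathcal{H}^{1,q}_b}$, which is the desired continuous inclusion.

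The main obstacle is step (i)'s handling of $\mathfrak{M}((b-b_Q)a)$: it is essential to exploit both cancellations $\int a = 0$ and $\int ab = 0$ \emph{simultaneously}, so that the mean of $(b-b_Q)a$ vanishes and one can invoke the $L^r$-atomic characterization of $H^1$. Relatedly, because $[b,\mathfrak{M}]$ is only sublinear, one cannot simply pass the limit $f_N \to f$ directly through $[b,\mathfrak{M}]$; this is precisely why step (iii) routes convergence through the bilinear object $\mathfrak{S}$ via Theorem \ref{Bonami}, and why the equivalence $(i)\Leftrightarrow(ii)$ of that theorem was established beforehand.
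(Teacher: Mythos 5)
Your proof is correct, and its overall architecture coincides with the paper's: both hinge on the observation that the two cancellations $\int a\,dx=\int ab\,dx=0$ make $(b-b_Q)a$ a constant multiple of a classical $L^r$-atom, both derive a uniform bound over $(q,b)$-atoms, and both pass to the infinite sum by letting the partial sums act through the bilinear operator $\mathfrak S$ and invoking Lemma \ref{weak convergence} followed by Theorem \ref{Bonami}. The one place where you genuinely diverge is the middle step. The paper never estimates $[b,\mathfrak M](a)$ directly; instead it applies the bilinear decomposition of Theorem \ref{Th1} to the Riesz transforms $\mathcal R_j$ (which lie in $\mathcal K$ by Corollary \ref{Riesz transforms}) to write $\mathcal R_j(\mathfrak S(a,b))=[b,\mathcal R_j](a)-\mathcal R_j((b-b_Q)a)-\mathfrak R_j(a,b)$, bounds each piece in $L^1$, and uses the Riesz-transform characterization of $H^1$ to conclude $\|\mathfrak S(a,b)\|_{H^1}\leq C\|b\|_{BMO}$ for each atom. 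You instead prove the atomic estimate $\|[b,\mathfrak M](a)\|_{L^1}\leq C\|b\|_{BMO}$ by hand (in effect re-deriving that $\mathfrak M\in\mathcal K$, which the paper only asserts with a citation to Stein) and then convert to the $H^1$-bound on $\mathfrak S(f_N,b)$ via the equivalence $(i)\Leftrightarrow(ii)$ of Theorem \ref{Bonami}. Your route is more self-contained at the level of the atomic estimate but still leans on the decomposition machinery implicitly, since that equivalence in Theorem \ref{Bonami} is itself proved from the subbilinear decomposition of Theorem \ref{new1}; the paper's route avoids any pointwise maximal-function computation at the cost of importing the Riesz transforms. Both are valid, and your final limiting argument is exactly the paper's.
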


\begin{proof}
Let $a$ be a $(q,b)$-atom related to the cube $Q$. We first prove that $(b-b_{Q})a$ is $C \|b\|_{BMO}$ times a classical $(\widetilde q+1)/2$-atom.
One has supp $(b-b_{Q})a\subset$ supp $a\subset Q$ and $\int_{\mathbb R^n}(b(x)-b_{Q})a(x)dx=\int_{\mathbb R^n}b(x)a(x)dx- b_{Q}\int_{\mathbb R^n}a(x)dx=0$. Moreover, by H\"older inequality and John-Nirenberg inequality, we get
$$\|(b-b_{Q})a\|_{L^{(\widetilde q+1)/2}}\leq \|(b-b_{Q})\chi_{Q}\|_{L^{\widetilde q(\widetilde q+1)/(\widetilde q-1)}}\|a\|_{L^{\widetilde q}} \leq C \|b\|_{BMO}|Q|^{(-\widetilde q+1)/(\widetilde q+1)},$$
where $\widetilde q=q$ if $1<q<\infty$, $\widetilde q=2$ if $q=\infty$, and  $C>0$ is independent of $b,a$. Hence, $(b-b_{Q})a$ is $C \|b\|_{BMO}$ times a classical $(\widetilde q+1)/2$-atom, and $\|(b-b_{Q})a\|_{H^1}\leq C\|b\|_{BMO}$.

We now prove that $\mathfrak S(a,b)$ belongs to $H^1$.

By Theorem \ref{Th1},  there exist $n$ bounded bilinear operators $\mathfrak R_j: H^1(\mathbb R^n)\times BMO(\mathbb R^n)\to L^1(\mathbb R^n)$, $j=1,...,n$, such that
$$[b, \mathcal R_j](a)= \mathfrak R_j(a,b)+ \mathcal R_j(\mathfrak S(a,b)),$$
since  $\mathcal R_j$ is linear and belongs to $\mathcal K$ (see Corollary \ref{Riesz transforms}). Consequently, for all $j=1,...,n$, as $\mathcal R_j\in \mathcal K$,
\begin{eqnarray*}
\|\mathcal R_j(\mathfrak S(a,b))\|_{L^1}&=& \|(b-b_Q)\mathcal R_j(a)- \mathcal R_j((b-b_Q)a)- \mathfrak R_j(a,b)\|_{L^1}\\
&\leq& \|(b-b_Q)\mathcal R_j(a)\|_{L^1}+  \|\mathcal R_j\|_{H^1\to L^1}\|((b-b_Q)a)\|_{H^1}+ \|\mathfrak R_j(a,b)\|_{L^1}\\
&\leq& C \|b\|_{BMO}.
\end{eqnarray*}
This proves that $\mathfrak S(a,b)\in H^1(\mathbb R^n)$ since $\|\mathfrak S(a,b)\|_{L^1}\leq C\|b\|_{BMO}$, and moreover that
\begin{equation}\label{Perez}
\|\mathfrak S(a,b)\|_{H^1}\leq C\|b\|_{BMO}.
\end{equation}

Now, for any $f\in \mathcal H^{1,q}_b(\mathbb R^n)$, there exists an expansion $f=\sum_{j=1}^\infty \lambda_j a_j$ where the $a_j$'s are $(q,b)$-atoms and  $\sum_{j=1}^\infty |\lambda_j|\leq 2 \|f\|_{\mathcal H^{1,q}_b}$. Then the sequence $\{\sum_{j=1}^k \lambda_j a_j\}_{k\geq 1}$ converges to $f$ in $\mathcal H^{1,q}_b(\mathbb R^n)$ and thus in $H^1(\mathbb R^n)$. Hence, Lemma \ref{Le1} implies that the sequence $\Big\{\mathfrak S\Big(\sum_{j=1}^k \lambda_j a_j, b\Big)\Big\}_{k\geq 1}$ converges to $\mathfrak S(f,b)$ in $L^1(\mathbb R^n)$. In addition, by (\ref{Perez}),
$$\Big\|\mathfrak S\Big(\sum_{j=1}^k \lambda_j a_j, b\Big)\Big\|_{H^1}\leq \sum_{j=1}^k |\lambda_j| \|\mathfrak S(a_j, b)\|_{H^1}\leq C \|f\|_{\mathcal H^{1,q}_b}\|b\|_{BMO}.$$
We then use Lemma \ref{weak convergence} to conclude that $\mathfrak S(f,b)\in H^1(\mathbb R^n)$, and thus $f\in H^1_b(\mathbb R^n)$ (see Theorem \ref{Bonami}). Moreover,
\begin{eqnarray*}
\|f\|_{H^1_b}&\leq& C (\|f\|_{H^1}\|b\|_{BMO}+ \|\mathfrak S(f,b)\|_{H^1})\\
&\leq& C \Big(\|f\|_{\mathcal H^{1,q}_b}\|b\|_{BMO}+ \varliminf\limits_{k\to \infty}\Big\|\mathfrak S\Big(\sum_{j=1}^k \lambda_j a_j, b\Big)\Big\|_{H^1}\Big)\\
&\leq& C \|f\|_{\mathcal H^{1,q}_b}\|b\|_{BMO},
\end{eqnarray*}
which ends the proof.
\end{proof}

From Theorem \ref{new2} and Theorem \ref{Bonami}, we get the following corollary.
\begin{Corollary}\label{tui}
Let $b$ be a $BMO$-function, $T\in \mathcal K$ and $1<q\leq \infty$. Then the linear commutator $[b,T]$ maps continuously $\mathcal H^{1,q}_b(\mathbb R^n)$ into $L^1(\mathbb R^n)$.
\end{Corollary}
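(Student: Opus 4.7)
The corollary is essentially immediate from two preceding results: Theorem \ref{new2} (the $(H^1_b,L^1)$-boundedness of $[b,T]$ for $T\in\mathcal K$) and Theorem \ref{compare} (the continuous inclusion $\mathcal H^{1,q}_b(\mathbb R^n)\subset H^1_b(\mathbb R^n)$). My approach is simply to chain these two estimates.

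First I would dispense with the trivial case. Theorem \ref{new2} is stated for a non-constant $BMO$-function $b$, so I need to handle constants separately. But if $b$ is a constant then $(b(x)-b(y))f(y)\equiv 0$, so the commutator $[b,T](f)$ vanishes identically and the conclusion is trivially true with constant $0$. Hence from now on I may assume $b$ is non-constant, so that $\|b\|_{BMO}>0$ and both Theorem \ref{new2} and Theorem \ref{compare} are directly available.

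Second, I would fix $f\in\mathcal H^{1,q}_b(\mathbb R^n)$ and apply Theorem \ref{compare}, which yields $f\in H^1_b(\mathbb R^n)$ together with the quantitative bound
$$\|f\|_{H^1_b}\leq C_1\|f\|_{\mathcal H^{1,q}_b}\|b\|_{BMO},$$
where $C_1$ depends only on $n$ and $q$ (the dependence on $q$ entering through the John--Nirenberg step used to control the $L^{\widetilde q}$ norm of $b-b_Q$ on the support of a $(q,b)$-atom). Then, applying Theorem \ref{new2} to the non-constant $BMO$-function $b$ and the fixed $T\in\mathcal K$, I obtain
$$\|[b,T](f)\|_{L^1}\leq C_2\|f\|_{H^1_b},$$
where $C_2=C_2(n,T)$. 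Composing the two inequalities gives
$$\|[b,T](f)\|_{L^1}\leq C_1C_2\|b\|_{BMO}\|f\|_{\mathcal H^{1,q}_b},$$
which is exactly the asserted continuous mapping property from $\mathcal H^{1,q}_b(\mathbb R^n)$ into $L^1(\mathbb R^n)$.

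There is no real obstacle here since all the work has already been done in Theorem \ref{new2} and Theorem \ref{compare}; the only tiny point to be careful about is the separation of the constant case and the observation that the hypothesis ``linear commutator'' is automatically compatible with Theorem \ref{new2} because, when $T$ is linear, the sublinear commutator $T((b(x)-b(\cdot))f(\cdot))(x)$ reduces to $bT(f)-T(bf)$ pointwise, so the $L^1$-bound from Theorem \ref{new2} transfers verbatim.
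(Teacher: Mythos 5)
Your proposal is correct and follows essentially the same route as the paper, which derives the corollary immediately by chaining the continuous inclusion $\mathcal H^{1,q}_b(\mathbb R^n)\subset H^1_b(\mathbb R^n)$ of Theorem \ref{compare} with the $(H^1_b,L^1)$-boundedness of Theorem \ref{new2}. Your explicit handling of the constant-$b$ case is a minor but harmless addition the paper leaves implicit.
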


\section{Proof of  Theorem \ref{new1}, Theorem \ref{Th1}, Theorem \ref{new2}}

In order to prove the decomposition theorems (Theorem \ref{Th1} and Theorem \ref{new1}), we need the following two lemmas.
\begin{Lemma}\label{vo}
Let $T\in \mathcal K$ and $a$ be a classical $H^1$-atom related to the cube $mQ$. Then, there exists a positive constant $C= C(m)$ such that
$$\|(g-g_Q)Ta\|_{L^1}\leq C \|g\|_{BMO},\; \mbox{for all}\; g\in BMO(\mathbb R^n).$$
\end{Lemma}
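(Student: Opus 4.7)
\textbf{Proof plan for Lemma \ref{vo}.} The plan is to reduce to the defining inequality of the class $\mathcal{K}$ by swapping the averaging cube. Since $a$ is an $H^1$-atom related to $mQ$, the defining property of $\mathcal{K}$ (applied with the cube $mQ$ playing the role of $Q$ in the definition) yields
\begin{equation*}
\|(g - g_{mQ}) T a\|_{L^1} \leq C \|g\|_{BMO}
\end{equation*}
with a constant $C$ independent of $g$ and $a$. So it only remains to replace $g_{mQ}$ by $g_Q$ at the cost of a factor depending on $m$.

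Using the triangle inequality, I would write
\begin{equation*}
\|(g - g_Q) T a\|_{L^1} \leq \|(g - g_{mQ}) T a\|_{L^1} + |g_{mQ} - g_Q|\,\|T a\|_{L^1}.
\end{equation*}
The first term is already controlled above. For the second term, since $T$ is bounded from $H^1(\mathbb{R}^n)$ into $L^1(\mathbb{R}^n)$ and $a$ is a classical $H^1$-atom (hence $\|a\|_{H^1} \leq C$ uniformly), one has $\|Ta\|_{L^1} \leq C\|T\|_{H^1\to L^1}$. The difference of averages is handled by the standard $BMO$ estimate: since $Q \subseteq mQ$,
\begin{equation*}
|g_{mQ} - g_Q| \leq \frac{1}{|Q|}\int_Q |g(x) - g_{mQ}|\,dx \leq \frac{|mQ|}{|Q|}\cdot\frac{1}{|mQ|}\int_{mQ}|g(x)-g_{mQ}|\,dx \leq m^n \|g\|_{BMO}.
\end{equation*}

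Combining these three bounds gives $\|(g - g_Q) T a\|_{L^1} \leq C(m)\|g\|_{BMO}$, which is the claim. There is no real obstacle here; the only subtle point is that the cube in the definition of $\mathcal{K}$ need not coincide with the cube $Q$ over which we average, and the lemma records precisely this standard switch with the resulting dependence on $m$ (which will later be the fixed constant coming from the support of the wavelets).
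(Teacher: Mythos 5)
Your proposal is correct and follows essentially the same route as the paper: apply the defining inequality of $\mathcal K$ with the cube $mQ$, then use the triangle inequality together with the standard bound $|g_Q-g_{mQ}|\leq C(m)\|g\|_{BMO}$ and the $H^1\to L^1$ boundedness of $T$ to control the correction term $|g_{mQ}-g_Q|\,\|Ta\|_{L^1}$. The paper's proof is a condensed version of exactly this argument.
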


\begin{proof}
Since $T\in \mathcal K$ and since $|g_Q-g_{mQ}|\leq C(m) \|g\|_{BMO}$, we have
$$\|(g-g_Q)Ta\|_{L^1}\leq C(m)\|g\|_{BMO}\|Ta\|_{L^1}+ \|(g-g_{mQ})Ta\|_{L^1}\leq C \|g\|_{BMO}.$$
\end{proof}

\begin{Lemma}\label{finite decomposition}
The norms $\|\cdot\|_{H^1}$ and $\|\cdot\|_{H^1_{\rm fin}}$ are equivalent on $H^1_{\rm fin}(\mathbb R^n)$.
\end{Lemma}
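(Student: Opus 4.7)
The plan is to establish separately the two inequalities $\|f\|_{H^1}\leq C\|f\|_{H^1_{\rm fin}}$ and $\|f\|_{H^1_{\rm fin}}\leq C\|f\|_{H^1}$ for $f\in H^1_{\rm fin}(\mathbb R^n)$. The first of these is immediate from the remark, made just before Theorem \ref{atomic decomposition}, that every $\psi$-atom has $H^1$-norm bounded by a universal constant $C_0$: if $f=\sum_{j=1}^k \lambda_j a_j$ is any finite $\psi$-atomic representation, then $\|f\|_{H^1}\leq C_0\sum_{j=1}^k|\lambda_j|$ by the triangle inequality, and taking the infimum over such representations yields the claim.

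For the reverse inequality I plan to adapt the Meda--Sj\"ogren--Vallarino \cite{MSV1} truncation strategy to the present wavelet-atomic setting. The key observation is that an element $f\in H^1_{\rm fin}(\mathbb R^n)$ has two extra features besides belonging to $H^1(\mathbb R^n)$: it is compactly supported in some cube $Q_0$, and it lies in $L^2(\mathbb R^n)$ (both inherited from the defining finite combination of compactly supported $L^2$ $\psi$-atoms). I would apply Theorem \ref{atomic decomposition} to write $f=\sum_{j=1}^\infty \lambda_j a_j$, convergent in $H^1$, with $\psi$-atoms $a_j$ related to dyadic cubes $R_j$ and $\sum_{j=1}^\infty|\lambda_j|\leq C\|f\|_{H^1}$, then set $F_N:=\sum_{j=1}^N\lambda_j a_j$ and $r_N:=f-F_N$. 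The goal is to show that, for $N$ large, $r_N$ can be written as $\mu_N b_N$ with $b_N$ a $\psi$-atom and $|\mu_N|\to 0$; then $f=\sum_{j=1}^N\lambda_j a_j+\mu_N b_N$ is a finite $\psi$-atomic decomposition of total atomic mass at most $C\|f\|_{H^1}+|\mu_N|$, which gives the required bound.

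The substance of the argument is to control both the support and the $L^2$-size of $r_N$. Since each $a_j$ is supported in $mR_j$, one splits the tail according to whether the reference cube $R_j$ is contained in a fixed enlargement $\widetilde Q_0$ of $Q_0$ or not: the atoms with $R_j\not\subset\widetilde Q_0$ contribute to $r_N$ an $H^1$-mass that can be made arbitrarily small for $N$ large, thanks to the compact support of $f$ together with the wavelet characterisations in Theorem \ref{wavelet characterization} and Theorem \ref{Lebesgue}. Absorbing this small contribution as an additional small-mass $\psi$-atom, one may assume all retained tail atoms are supported in $\widetilde Q_0$, so that $r_N$ itself is supported in $\widetilde Q_0$. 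Because $\|r_N\|_{L^2}\to 0$ (by the $L^2$-convergence of the wavelet expansion of $f$, via Theorem \ref{Lebesgue}) and because $\int r_N=0$ (since $\int f=\int a_j=0$), the function $r_N$ is, up to a constant $\mu_N$ of order $\|r_N\|_{L^2}|\widetilde Q_0|^{1/2}\to 0$, a $\psi$-atom related to $\widetilde Q_0$.

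The main obstacle is precisely this localisation step: the abstract atomic decomposition provided by Theorem \ref{atomic decomposition} carries no a priori bound on how large the cubes $R_j$ can be, and one must exploit jointly the compact support and the $L^2$-membership of $f$ --- exactly the two properties that distinguish $H^1_{\rm fin}$ inside $H^1$ --- to confine the tail to a single bounded cube. Once this localisation is in hand, the remaining estimates are routine consequences of the wavelet characterisations of $L^2(\mathbb R^n)$ and $H^1(\mathbb R^n)$ already stated in Theorem \ref{Lebesgue} and Theorem \ref{wavelet characterization}.
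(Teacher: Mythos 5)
Your first inequality and your overall strategy --- truncate an infinite atomic decomposition and show the remainder is a small multiple of a single $\psi$-atom, exploiting the compact support and the $L^2$-membership of $f$ --- are exactly the paper's. But two steps in your execution do not survive scrutiny, and both come from treating Theorem \ref{atomic decomposition} as a black box. First, the ``far'' part of the tail, $\sum_{j>N,\,R_j\not\subset\widetilde Q_0}\lambda_j a_j$, is an infinite sum of atoms whose reference cubes may be spread over an unbounded region; having small total mass $\sum|\lambda_j|$ does not make it (a small multiple of) a single $\psi$-atom, since a $\psi$-atom must be a wavelet sum over $I\subset R$ for \emph{one} cube $R$ and must satisfy the $L^2$-bound $\|\cdot\|_{L^2}\leq |R|^{-1/2}$. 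If a small $H^1$-mass alone could be ``absorbed as an additional small-mass $\psi$-atom,'' the lemma would be trivial, so this absorption step begs the question. Second, you need $\|r_N\|_{L^2}\to 0$, but Theorem \ref{atomic decomposition} only provides convergence of $\sum_j\lambda_j a_j$ in $H^1$ (hence in $L^1$ and in $\mathcal S'$); nothing in the black-box statement gives $L^2$-convergence of its partial sums, and ``the $L^2$-convergence of the wavelet expansion of $f$'' concerns a different series.

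Both gaps are closed by doing what the paper does: replace the abstract decomposition by the concrete Hern\'andez--Weiss construction (Theorem 5.12 of \cite{HW}), in which each atom is a block $\sum_{I\subset \widetilde I^i_k,\, I\in\mathcal B_k}\sum_{\sigma\in E}\langle f,\psi^\sigma_I\rangle\psi^\sigma_I$ of the wavelet expansion of $f$ itself. Then every atom automatically has its wavelet support inside that of $f$, hence inside $\bigcup_{j\leq N_0}mR_j\subset Q$ for a fixed cube $Q$ (there is no ``far part'' to absorb), and every sub-series is a sub-sum of the wavelet expansion of $f\in L^2$, hence converges in $L^2$, so the truncation error is indeed a small multiple of one $\psi$-atom related to $Q$. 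One further point your single truncation does not address: the scale index $k$ in that construction runs over all of $\mathbb Z$, and the coarse scales $k\leq k'$ (with $2^{k'}\leq |Q|^{-1}$) cannot be discarded as a tail; the paper bundles them into a single function $g$ and checks $\|g\|_{L^2}^2\leq C\,2^{2k'}|Q|\leq C|Q|^{-1}$, so that $C^{-1/2}g$ is itself one $\psi$-atom related to $Q$. With these repairs your argument becomes the paper's proof.
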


We point out that in the proof below we use the results and notations of Theorem 5.12 of \cite{HW}. Even though the proofs in \cite{HW} are in the one-dimensional case, they can be easily carried out in higher dimension as well.

\begin{proof}[The proof of Lemma \ref{finite decomposition}]
Obviously, $H^1_{\rm fin}(\mathbb R^n)\subset H^1(\mathbb R^n)$ and for all $f\in H^1_{\rm fin}(\mathbb R^n)$, we have $\|f\|_{H^1}\leq C \|f\|_{H^1_{\rm fin}}$. We now have to show that there exists a constant $C>0$ such that  for all $f\in H^1_{\rm fin}(\mathbb R^n)$,
$$\|f\|_{H^1_{\rm fin}}\leq C \|f\|_{H^1}.$$

By homogeneity, we can assume that $\|f\|_{H^1}=1$. We write $f=\sum_{j=1}^{N_0}\lambda_j a_j$, where the $a_j$'s are $\psi$-atoms related to the cubes $R_j$'s. Since $f\in L^2(\mathbb R^n)\cap H^1(\mathbb R^n)$, there exists a $\psi$-atomic decomposition (see \cite{HW}, Theorem 5.12)
\begin{eqnarray*}
f= \sum_{I}\sum_{\sigma\in E}\langle f, \psi^\sigma_I\rangle \psi^\sigma_I
&=& \sum_{k\in\mathbb Z}\sum_{i\in \Lambda_k}\left(\sum_{{I\subset \widetilde I^i_k}, I\in \mathcal B_k}\sum_{\sigma\in E} \langle f, \psi^\sigma_I\rangle \psi^\sigma_I\right)
\end{eqnarray*}
where $\sum_{{I\subset \widetilde I^i_k}, I\in \mathcal B_k}\sum_{\sigma\in E} \langle f, \psi^\sigma_I\rangle \psi^\sigma_I= \lambda(k,i) a_{k,i}$ with $a_{k,i}$  $\psi$-atoms related to the cubes $m\widetilde I^i_k$ and
\begin{equation}\label{KY}
\sum_{k\in\mathbb Z}\sum_{i\in \Lambda_k}|\lambda(k,i)|\leq C \|f\|_{H^1}=C.
\end{equation}
We note that supp $a_{k,i}\subset \bigcup_{j=1}^{N_0} mR_j$ for all  $k\in\mathbb Z, i\in \Lambda_k$. Recall that
\begin{eqnarray*}
\mathcal W_\psi f
&=&\Big(\sum_{I}\sum_{\sigma\in E}|\langle f, \psi^\sigma_I\rangle|^2 |I|^{-1}\chi_I\Big)^{1/2}\\
&=&\Big(\sum_{j=1}^{N_0}\sum_{I\subset R_j}\sum_{\sigma\in E}|\langle f, \psi^\sigma_I\rangle|^2 |I|^{-1}\chi_I\Big)^{1/2}
\end{eqnarray*}
and $\Omega_k= \{x\in\mathbb R^n: \mathcal W_\psi f(x)> 2^k\}$ for any $k\in\mathbb Z$. Clearly, supp $\mathcal W_\psi f\subset \bigcup_{j=1}^{N_0} mR_j$. So, there exists a cube $Q$ such that $\Omega_k\subset\,\mbox{supp}\,\mathcal W_\psi f\subset \bigcup_{j=1}^{N_0} mR_j\subset Q$ for all $k\in\mathbb Z$. We now denote by $k'$ the largest integer $k$ such that $2^k\leq |Q|^{-1}$. Then, we define the functions $g$ and $\ell$ by
$$g= \sum_{k\leq k'}\sum_{i\in \Lambda_k}\left(\sum_{{I\subset \widetilde I^i_k}, I\in \mathcal B_k}\sum_{\sigma\in E} \langle f, \psi^\sigma_I\rangle \psi^\sigma_I\right)\;\mbox{and}\; \ell= \sum_{k> k'}\sum_{i\in \Lambda_k}\left(\sum_{{I\subset \widetilde I^i_k}, I\in \mathcal B_k}\sum_{\sigma\in E} \langle f, \psi^\sigma_I\rangle \psi^\sigma_I\right).$$

Obviously, $f=g+\ell$, moreover, supp $g\subset Q$ and supp $\ell\subset Q$. On the other hand, it follows from Theorem 5.12 of  \cite{HW} that $\sum_{{I\subset \widetilde I^i_k}, I\in \mathcal B_k}\sum_{\sigma\in E} |\langle f, \psi^\sigma_I\rangle|^2\leq C 2^{2k} |\widetilde I^i_k\cap \Omega_k|$. Hence, as the dyadic cubes $\widetilde I^i_k$ are disjoint (see also \cite{HW}), we get
\begin{eqnarray*}
\|g\|_{L^2}^2
&\leq&
 C \sum_{k\leq k'}\sum_{i\in \Lambda_k}\sum_{{I\subset \widetilde I^i_k}, I\in \mathcal B_k}\sum_{\sigma\in E} |\langle f, \psi^\sigma_I\rangle|^2\\
&\leq&
C \sum_{k\leq k'}\sum_{i\in \Lambda_k} 2^{2k} |\widetilde I^i_k\cap \Omega_k|\leq C \sum_{k\leq k'} 2^{2k} |\Omega_k|\\
&\leq&
C 2^{2k'}|Q|\leq C |Q|^{-1}.
\end{eqnarray*}
This proves that $C^{-1/2}g$ is a $\psi$-atom related to the cube $Q$.

Now, for any positive integer $K$, set $F_K= \{(k,i): k> k', |k|+ |i|\leq K\}$ and $\ell_K= \sum_{(k,i)\in F_K}\left(\sum_{{I\subset \widetilde I^i_k}, I\in \mathcal B_k}\sum_{\sigma\in E} \langle f, \psi^\sigma_I\rangle \psi^\sigma_I\right)$. Observe that since $f\in L^2(\mathbb R^n)$, the series $\sum_{k> k'}\sum_{i\in \Lambda_k}\left(\sum_{{I\subset \widetilde I^i_k}, I\in \mathcal B_k}\sum_{\sigma\in E} \langle f, \psi^\sigma_I\rangle \psi^\sigma_I\right)$ converges in $L^2(\mathbb R^n)$. So, for any $\varepsilon>0$, if $K$ is large enough,  $\varepsilon^{-1}(\ell- \ell_K)$ is a $\psi$-atom related to the cube $Q$. Therefore, $f= g+ \ell_K + (\ell- \ell_K)$ is a finite linear  combination of atoms for $f$, and thus
\begin{eqnarray*}
\|f\|_{H^1_{\rm fin}}
&\leq& C(\|g\|_{H^1_{\rm fin}}+ \|\ell_K\|_{H^1_{\rm fin}}+ \|\ell- \ell_K\|_{H^1_{\rm fin}})\\
&\leq& 
C\Big(C+ \sum_{k\in\mathbb Z}\sum_{i\in \Lambda_k}|\lambda(k,i)|+ \varepsilon\Big)\leq C
\end{eqnarray*}
by (\ref{KY}). It ends the proof.

\end{proof}

\begin{proof}[Proof of Theorem \ref{new1}]
We define the subbilinear operator $\mathfrak R$  by
$$\mathfrak R(f,b)(x):= \Big|T\Big(b(x)f(\cdot)- \Pi_2(f,b)(\cdot)\Big)(x)\Big|+ |T(\Pi_1(f,b))(x)|+ |T(\Pi_4(f,b))(x)|$$
for all $(f,b)\in H^1(\mathbb R^n)\times BMO(\mathbb R^n)$. Then, by Theorem \ref{Decomposition theorem}, we obtain that 
$$|T(\mathfrak S(f,b))| -\mathfrak R(f,b)\leq |[b,T](f)|\leq \mathfrak R(f,b)+ |T(\mathfrak S(f,b))|.$$

By Lemma \ref{Le1}, Lemma \ref{Le2} and Lemma \ref{Le3}, it is sufficient to  show that the subbilinear operator
$$\mathfrak U(f,b)(x):= \Big|T\Big(b(x)f(\cdot)- \Pi_2(f,b)(\cdot)\Big)(x)\Big|$$
 is  bounded from $H^1(\mathbb R^n)\times BMO(\mathbb R^n)$ into $L^1(\mathbb R^n)$. 

We first consider $b\in BMO(\mathbb R^n)$ and $f$  a $\psi$-atom related to the cube $Q$. Then, by Remark \ref{remark}, we have
$$\mathfrak U(f,b)(x)= \mathfrak U(f,b- b_Q)(x)\leq |(b(x)-b_Q)Tf(x)|+ |T(\Pi_2(f, b-b_Q))(x)|.$$

Consequently, by Remark \ref{remark}, Lemma \ref{vo} and the fact $f$ is $C$ times a classical atom related to the cube $mQ$, we obtain that
\begin{equation}\label{con}
\|\mathfrak U(f,b)\|_{L^1}\leq \|(b-b_Q)Tf\|_{L^1}+ \|T\|_{H^1\to L^1}\|\Pi_2(f, b-b_Q)\|_{H^1}\leq C \|b\|_{BMO},
\end{equation}
where $C>0$ independent of $f,b$.

Now, let $b\in BMO(\mathbb R^n)$ and $f\in H^1_{\rm fin}(\mathbb R^n)$. By Lemma \ref{finite decomposition}, there exists a finite decomposition $f=\sum_{j=1}^k \lambda_j a_j$ such that $\sum_{j=1}^k |\lambda_j|\leq C \|f\|_{H^1}$. Consequently, by (\ref{con}), we obtain that
$$\|\mathfrak U(f,b)\|_{L^1}\leq \sum_{j=1}^k |\lambda_j| \|\mathfrak U(a_j,b)\|_{L^1}\leq  C \|f\|_{H^1}\|b\|_{BMO},$$
which ends the proof as $H^1_{\rm fin}(\mathbb R^n)$ is dense in $H^1(\mathbb R^n)$ for the norm $\|\cdot\|_{H^1}$.

\end{proof}

\begin{proof}[Proof of Theorem \ref{Th1}]
We define the bilinear operator $\mathfrak R$ by
$$\mathfrak R(f,b)=  \Big(b Tf- T(\Pi_2(f,b))\Big) - T(\Pi_1(f,b)+ \Pi_4(f,b)),$$
for all $(f,b)\in H^1(\mathbb R^n)\times BMO(\mathbb R^n)$. Then, it follows from Theorem \ref{Decomposition theorem} and the proof of Theorem \ref{new1} that
$$[b, T](f)= \mathfrak R(f,b)+ T(\mathfrak S(f,b)),$$
where the bilinear operator $\mathfrak R$ is  bounded from $H^1(\mathbb R^n)\times BMO(\mathbb R^n)$ into $L^1(\mathbb R^n)$. This completes the proof.

\end{proof}

\begin{proof}[Proof of Theorem \ref{new2}]
Theorem \ref{new2} is an immediate corollary of Theorem \ref{new1} and Theorem \ref{Bonami}.
\end{proof}

\section{Proof of Theorem \ref{bilinear estimates}, Theorem \ref{Th3} and Theorem \ref{Th4}}

First we recall the following well-known result.\\\\
{\bf Theorem A.} (see \cite{CG} or \cite{Do})
Let $T$ be a Calder\'on-Zygmund operator satisfying $T1=T^*1=0$, $1<q<\infty$ and $1/p+1/q=1$. Then, $fTg- gT^*f\in H^1(\mathbb R^n)$ for all $f\in L^p(\mathbb R^n)$, $g\in L^q(\mathbb R^n)$.

Now, in order to prove the bilinear type estimates and the Hardy type theorems for the commutators of Calder\'on-Zygmund operators, we need the following three technical lemmas.

\begin{Lemma}\label{almost diagonal 1}
Let $\delta\in (0,1]$, and $A, B$ be two  $\delta$-Calder\'on-Zygmund operators such that $A 1=A^*1=B 1=B^*1=0$. Then, there exists a constant $C=C(n,\delta)$ such that
$$\sum_{I,I',I''}\sum_{\sigma,\sigma',\sigma''\in E}|\langle f,\psi^{\sigma}_I\rangle \langle g,\psi^{\sigma'}_{I'}\rangle \langle A\psi^{\sigma}_I, \psi^{\sigma''}_{I''}\rangle \langle B\psi^{\sigma'}_{I'},\psi^{\sigma''}_{I''}\rangle|\leq C \|f\|_{H^1}\|g\|_{BMO}$$
for all $f\in H^1(\mathbb R^n), g\in BMO(\mathbb R^n)$.
\end{Lemma}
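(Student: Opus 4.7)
The plan is to exploit the almost-diagonal representation of $\delta$-Calder\'on-Zygmund operators satisfying $T1=T^*1=0$ in the smooth compactly-supported wavelet basis, and then close the estimate by the wavelet-coefficient version of the $H^1$--$BMO$ duality encoded in Theorems~\ref{wavelet characterization} and \ref{BMO}. Using the size and H\"older bounds of the kernel of $A$ together with the moment conditions $A1=A^*1=0$ and the fact that each $\psi^\sigma_I$ is a $C^1$, compactly supported function with vanishing mean, a standard calculation -- which, when $\ell(I)\leq \ell(I'')$, exploits the cancellation of $\psi^{\sigma}_I$ and the H\"older regularity of $K(x,\cdot)$, and symmetrically in the opposite case -- yields the almost-diagonal estimate
$$|\langle A\psi^{\sigma}_{I},\psi^{\sigma''}_{I''}\rangle|\leq C\,\omega(I,I''),$$
where, writing $\ell(I)$ for the side length of $I$ and $d(I,J)$ for the distance between $I$ and $J$,
$$\omega(I,J):=\left(\frac{\ell(I)\wedge\ell(J)}{\ell(I)\vee\ell(J)}\right)^{\frac{n}{2}+\delta}\left(1+\frac{d(I,J)}{\ell(I)\vee\ell(J)}\right)^{-n-\delta}.$$
The same bound holds for $B$.

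Next, putting $\alpha^{\sigma}_I:=|\langle f,\psi^{\sigma}_I\rangle|$, $\beta^{\sigma'}_{I'}:=|\langle g,\psi^{\sigma'}_{I'}\rangle|$, and
$$F^{\sigma''}_{I''}:=\sum_{I,\sigma}\alpha^{\sigma}_I\,\omega(I,I''),\qquad G^{\sigma''}_{I''}:=\sum_{I',\sigma'}\beta^{\sigma'}_{I'}\,\omega(I',I''),$$
the almost-diagonal bound above reduces the lemma to showing
$$\sum_{I'',\sigma''}F^{\sigma''}_{I''}G^{\sigma''}_{I''}\leq C\|f\|_{H^1}\|g\|_{BMO}.$$
We will then invoke the standard tent-space/Carleson-measure pairing (which is precisely the wavelet-coefficient form of the $H^1$--$BMO$ duality):
$$\sum_{I'',\sigma''}F^{\sigma''}_{I''}G^{\sigma''}_{I''}\leq C\left\|\Big(\sum_{I'',\sigma''}(F^{\sigma''}_{I''})^2|I''|^{-1}\chi_{I''}\Big)^{1/2}\right\|_{L^1}\sup_R\Big(\frac{1}{|R|}\sum_{I''\subset R,\sigma''}(G^{\sigma''}_{I''})^2\Big)^{1/2},$$
and reduce the argument to verifying
$$\left\|\Big(\sum_{I'',\sigma''}(F^{\sigma''}_{I''})^2|I''|^{-1}\chi_{I''}\Big)^{1/2}\right\|_{L^1}\leq C\|f\|_{H^1}\quad\text{and}\quad \sup_R\frac{1}{|R|}\sum_{I''\subset R,\sigma''}(G^{\sigma''}_{I''})^2\leq C\|g\|_{BMO}^2.$$
In view of Theorems~\ref{wavelet characterization} and \ref{BMO}, these two inequalities are precisely the statements that the nonnegative operator $T_\omega$ with matrix $\omega(I,J)$ maps tent-$L^1$ sequences to tent-$L^1$ sequences, and Carleson sequences to Carleson sequences, respectively.

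The main obstacle lies in establishing these two mapping properties of $T_\omega$. Both are classical in Frazier--Jawerth type wavelet theory, but they require a careful scale-by-scale decomposition of the sum defining $F^{\sigma''}_{I''}$ (respectively $G^{\sigma''}_{I''}$) according to whether $\ell(I)\leq \ell(I'')$ or $\ell(I)>\ell(I'')$, followed by a Cauchy--Schwarz at each scale and domination by iterations of the dyadic Hardy--Littlewood maximal operator. For the Carleson bound on $G$ one must additionally split the contribution of each $I''\subset R$ into the part with $I'$ inside a fixed enlargement of $R$ (controlled by the wavelet Carleson bound for $g\in BMO$ from Theorem~\ref{BMO}) and the part with $I'$ outside it (controlled by the off-diagonal decay $(1+d/\ell)^{-n-\delta}$ in $\omega$ combined with John--Nirenberg). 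The sharp exponents $\tfrac{n}{2}+\delta$ and $n+\delta$ in $\omega$ are exactly what make both bounds summable.
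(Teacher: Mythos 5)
Your proposal is correct, but it is organized differently from the paper's proof, so a comparison is worthwhile. Both arguments start from the same almost-diagonal coefficient bound $|\langle A\psi^{\sigma}_I,\psi^{\sigma''}_{I''}\rangle|\lesssim \omega(I,I'')$ (the paper quotes Proposition 1 of Meyer--Coifman rather than redoing the cancellation/regularity computation). From there the paper sums over $I''$ \emph{first}: it weakens $\omega$ to a kernel $p_\delta$ with exponent $\delta/2$ and an extra $(1+|j-j'|^2)^{-1}$ factor precisely so that Dobyinsky's composition lemma $\sum_{I''}p_\delta(I,I'')p_\delta(I',I'')\leq Cp_\delta(I,I')$ applies, collapsing the triple sum to $\sum_{I,I'}p_\delta(I,I')|\langle f,\psi^\sigma_I\rangle||\langle g,\psi^{\sigma'}_{I'}\rangle|$; it then needs almost-diagonal boundedness only on the single sequence space $\dot f^{0,2}_1$, followed by the coefficient-level $H^1\times BMO$ pairing. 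You instead keep $I''$ as the pairing index, apply one matrix to each coefficient sequence, and pair $F$ against $G$ by the discrete tent-space duality; this avoids the composition lemma (and hence the loss of half the H\"older exponent), but in exchange you must establish boundedness of $T_\omega$ on \emph{two} sequence spaces --- the $\dot f^{0,2}_1$-type space and the Carleson/$\dot f^{0,2}_\infty$-type space. Both of your mapping properties are indeed covered by the Frazier--Jawerth almost-diagonality theory (your $\omega$ satisfies their condition with $\varepsilon=\delta$, and their results include the $p=\infty$ Carleson case), and the by-hand scale-by-scale argument you sketch is the standard way to prove them, so there is no gap; but note that the Carleson-space boundedness is an extra ingredient the paper never needs, whereas the paper's composed kernel $p_\delta(I,I')$ is reused in the proof of the subsequent Lemma \ref{almost diagonal 2}, where the composition structure is essential.
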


\begin{Lemma}\label{almost diagonal 2}
Let $\delta\in (0,1]$, and $A_i, B_i$, $i=1,..., K$, be $\delta$-Calder\'on-Zygmund operators satisfying $A_i 1=A_i^* 1=B_i 1=B_i^* 1=0$, and for every $f$ and $g$ in $L^2(\mathbb R^n)$, 
$$\int_{\mathbb R^n}\Big(\sum_{i=1}^K A_i f .B_i g\Big)dx=0.$$
Then, the bilinear operator $\mathfrak P$, defined by $\mathfrak P(f,g)=\sum_{i=1}^K \mathfrak S(A_i f, B_i g)$, maps continuously   $H^1(\mathbb R^n)\times BMO(\mathbb R^n)$ into  $H^1(\mathbb R^n)$.
\end{Lemma}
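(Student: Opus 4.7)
The plan is to apply the Decomposition Theorem (Theorem \ref{Decomposition theorem}) to each pair $(A_i f, B_i g) \in H^1(\mathbb R^n) \times BMO(\mathbb R^n)$ and then exploit the cancellation hypothesis (which is precisely the operator identity $\sum_{i=1}^K B_i^* A_i = 0$ on $L^2$) to combine the problematic terms into an $H^1$-function. Writing $\mathfrak S = -\Pi_3$ and summing over $i$,
$$\mathfrak P(f,g) = \sum_{i=1}^K \Pi_1(A_i f, B_i g) + \sum_{i=1}^K \Pi_2(A_i f, B_i g) + \sum_{i=1}^K \Pi_4(A_i f, B_i g) - \sum_{i=1}^K A_i f \cdot B_i g.$$
Since $A_i 1 = 0$ each $A_i$ is bounded on $H^1(\mathbb R^n)$, and since $B_i^* 1 = 0$ each $B_i$ is bounded on $BMO(\mathbb R^n)$; hence $\|A_i f\|_{H^1}\leq C\|f\|_{H^1}$ and $\|B_i g\|_{BMO}\leq C\|g\|_{BMO}$, so Lemma \ref{Le3} and Lemma \ref{Le2} immediately give
$$\Big\|\sum_{i=1}^K \Pi_1(A_i f, B_i g)\Big\|_{H^1} + \Big\|\sum_{i=1}^K \Pi_4(A_i f, B_i g)\Big\|_{H^1} \leq C\|f\|_{H^1}\|g\|_{BMO}.$$

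It remains to show that $F := \sum_i \Pi_2(A_i f, B_i g) - \sum_i A_i f \cdot B_i g$ lies in $H^1(\mathbb R^n)$ with the required control. The mean-zero condition is automatic: by Remark \ref{remark}(1) combined with the hypothesis,
$$\int_{\mathbb R^n} F\,dx = \int_{\mathbb R^n}\mathfrak P(f,g)\,dx = -\sum_{i=1}^K\int_{\mathbb R^n}A_if\cdot B_ig\,dx = 0.$$
For the $H^1$ bound I would reduce $f$ to a finite linear combination of $\psi$-atoms via Lemma \ref{finite decomposition} and then apply the structural splitting $\Pi_2(A_ia, B_ig) = h_i^{(1)} + \kappa (B_ig)_R h_i^{(2)}$ from Lemma \ref{Le4}, in which $\|h_i^{(1)}\|_{H^1}\leq C\|g\|_{BMO}$ and each $h_i^{(2)}$ is an atom adapted to $mR$. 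The $h_i^{(1)}$ terms are then harmless; the ``bad'' combination $\sum_i \kappa (B_ig)_R h_i^{(2)} - \sum_i A_i a\cdot B_ig$ is where the cancellation must be invoked, either through Theorem A (noting that $A_i 1 = A_i^* 1 = 0$ so that each pair $A_ia\cdot \phi - a\cdot A_i^*\phi$ lies in $H^1$) or through a direct almost-diagonal wavelet computation paralleling Lemma \ref{almost diagonal 1}.

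The main obstacle is precisely this last matching step. Individually, $\Pi_2$ only lands in $H^{\log}(\mathbb R^n)$ and each product $A_if\cdot B_ig$ is a priori only defined distributionally as a sum in $L^1+H^{\log}$, so neither half of $F$ sits in $H^1$ by itself. The cancellation $\sum_i B_i^*A_i=0$ has to be used \emph{quantitatively} at the wavelet level (via the multiplication of $\langle A_i\psi_I^\sigma,\psi_{I''}^{\sigma''}\rangle$ and $\langle B_i\psi_{I'}^{\sigma'},\psi_{I''}^{\sigma''}\rangle$ as in Lemma \ref{almost diagonal 1}) to extract the extra decay that upgrades the $L^1$ mean-zero function $F$ into a genuine element of $H^1(\mathbb R^n)$.
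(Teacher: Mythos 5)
Your reduction leaves the heart of the lemma unproven. After peeling off the $\Pi_1$ and $\Pi_4$ terms you are left with $F=\sum_i\Pi_2(A_if,B_ig)-\sum_iA_if\cdot B_ig$, you verify only that $\int F\,dx=0$, and you then state that the cancellation ``has to be used quantitatively at the wavelet level'' without actually doing it. Mean zero plus $L^1$ (in fact here only $L^1+H^{\rm log}$) is far from membership in $H^1(\mathbb R^n)$, and neither escape route you gesture at is carried out: Theorem A pairs $A_ia$ with an $L^{q'}$ function, not with $B_ig$ for $g\in BMO(\mathbb R^n)$, and the ``direct almost-diagonal wavelet computation'' is precisely the missing proof. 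There is also a structural obstacle to your route: $A_if\cdot B_ig$ is a genuine pointwise product and does not pass through the finite-atom reduction of Lemma \ref{finite decomposition} in any obvious linear way, so even the setup of your final step is incomplete. (A small side slip: with the paper's conventions it is $A_i^*1=0$ that yields $H^1\to H^1$ boundedness of $A_i$ and $B_i1=0$ that yields $BMO\to BMO$ boundedness of $B_i$; harmless here since all four conditions are assumed.)

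The paper does not go through the decomposition theorem at all. It expands $\mathfrak P(f,g)$ directly in the wavelet basis as $\sum_i\sum_{I,I',I''}\sum_{\sigma,\sigma',\sigma''}\langle f,\psi_I^\sigma\rangle\langle g,\psi_{I'}^{\sigma'}\rangle\langle A_i\psi_I^\sigma,\psi_{I''}^{\sigma''}\rangle\langle B_i\psi_{I'}^{\sigma'},\psi_{I''}^{\sigma''}\rangle(\psi_{I''}^{\sigma''})^2$ and, for each fixed $(I,\sigma,I',\sigma')$, uses the hypothesis in the form $\sum_i\sum_{I'',\sigma''}\langle A_i\psi_I^\sigma,\psi_{I''}^{\sigma''}\rangle\langle B_i\psi_{I'}^{\sigma'},\psi_{I''}^{\sigma''}\rangle=0$ to replace $(\psi_{I''}^{\sigma''})^2$ by $(\psi_{I''}^{\sigma''})^2-(\psi_I^\sigma)^2$. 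The latter is an explicit $H^1$ function whose norm grows only logarithmically in the relative scale and distance of $I$ and $I''$; that logarithm is absorbed by sacrificing half of the $\delta$-decay in the kernel estimate (\ref{Meyer}), and the almost-diagonal machinery of Lemma \ref{almost diagonal 1} (inequality (\ref{Dob2}) and the boundedness of the matrix $\{p_\delta(I,I')\}$ on $\dot f^{0,2}_1$) then sums the resulting $H^1$ bounds over $I,I'$. This is exactly the quantitative use of the cancellation that your proposal names but does not supply; as written, your argument establishes the easy part and records the hard part as an open step.
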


\begin{Corollary}\label{operator S}
Let $T$ be a Calder\'on-Zygmund operator satisfying $T1=T^*1=0$. Then the bilinear operator $\mathfrak P$, defined by $\mathfrak P(f,g)= \mathfrak S(Tf, g)- \mathfrak S(f, T^*g)$, maps continuously  $H^1(\mathbb R^n)\times BMO(\mathbb R^n)$ into  $H^1(\mathbb R^n)$.
\end{Corollary}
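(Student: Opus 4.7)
The plan is to obtain the result as a direct application of Lemma \ref{almost diagonal 2}, with a minor observation about the admissible choices of operators. Choose $K=2$ and set $A_1=T$, $B_1=\mathrm{Id}$, $A_2=-\mathrm{Id}$, $B_2=T^{*}$, so that
$$
\sum_{i=1}^{2} \mathfrak S(A_i f, B_i g) \,=\, \mathfrak S(Tf,g) - \mathfrak S(f,T^{*}g) \,=\, \mathfrak P(f,g).
$$
With this choice of operators, the integral-vanishing hypothesis of Lemma \ref{almost diagonal 2} reduces to the $L^{2}$-duality identity
$$
\int_{\mathbb R^n}\Bigl(A_1 f\cdot B_1 g + A_2 f\cdot B_2 g\Bigr)dx \,=\, \int_{\mathbb R^n} Tf \cdot g\,dx - \int_{\mathbb R^n} f\cdot T^{*}g\,dx \,=\, 0,
$$
which holds for all $f,g\in L^{2}(\mathbb R^n)$ by the definition of $T^{*}$. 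Hence the orthogonality condition is automatic.

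The only point that deserves attention is that Lemma \ref{almost diagonal 2} is formally stated for $\delta$-Calder\'on-Zygmund operators satisfying $A_i 1 = A_i^{*} 1 = B_i 1 = B_i^{*} 1 = 0$, whereas in our choice $B_1$ and $A_2$ are the identity, which does not satisfy this cancellation. I would handle this by inspecting the proof of Lemma \ref{almost diagonal 2}: it is driven by the almost-diagonal estimate of Lemma \ref{almost diagonal 1} on the products of wavelet matrix entries $\langle A_i\psi_I^\sigma,\psi_{I''}^{\sigma''}\rangle\,\langle B_i\psi_{I'}^{\sigma'},\psi_{I''}^{\sigma''}\rangle$. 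When one of the two operators is the identity, the corresponding matrix element collapses to the Kronecker symbol $\delta_{(I,\sigma),(I'',\sigma'')}$, and the triple sum degenerates to a double sum controlled solely by the almost-diagonal decay of the remaining operator ($T$ or $T^{*}$). Since $T1=T^{*}1=0$, Lemma \ref{almost diagonal 1} applies to $T$ and $T^{*}$, giving the needed decay. Consequently the argument of Lemma \ref{almost diagonal 2} runs to completion and yields $\|\mathfrak P(f,g)\|_{H^1} \leq C\|f\|_{H^1}\|g\|_{BMO}$.

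The main obstacle — and really the only one — is precisely this verification that the identity is an admissible substitute in Lemma \ref{almost diagonal 2}. As an alternative to invoking the proof of Lemma \ref{almost diagonal 2}, one can give a direct argument from the wavelet identity
$$
\mathfrak P(f,g) \,=\, -\!\!\sum_{I,I',\sigma,\sigma'}\!\langle T\psi_{I'}^{\sigma'},\psi_I^\sigma\rangle\,\langle f,\psi_{I'}^{\sigma'}\rangle\,\langle g,\psi_I^\sigma\rangle\,\bigl[(\psi_I^\sigma)^2-(\psi_{I'}^{\sigma'})^2\bigr],
$$
obtained by expanding $Tf$ and $T^{*}g$ in the wavelet basis and noting that the diagonal terms $(I,\sigma)=(I',\sigma')$ cancel. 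Each bracketed factor has integral zero, and the matrix coefficients $\langle T\psi_{I'}^{\sigma'},\psi_I^\sigma\rangle$ decay off-diagonal by Calder\'on-Zygmund estimates and the condition $T1=T^{*}1=0$. Summation against the wavelet square-function characterization of $H^{1}(\mathbb R^n)$ (Theorem \ref{wavelet characterization}) together with the $BMO$ Carleson estimate of Theorem \ref{BMO} then produces the desired $H^{1}$ bound. Either route concludes the proof.
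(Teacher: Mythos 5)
Your proposal is correct and follows the route the paper intends: the corollary is stated as an immediate consequence of Lemma \ref{almost diagonal 2} with the choice $A_1=T$, $B_1=\mathrm{Id}$, $A_2=-\mathrm{Id}$, $B_2=T^{*}$, the orthogonality hypothesis being exactly the duality $\int Tf\cdot g\,dx=\int f\cdot T^{*}g\,dx$. Your observation that the identity does not literally satisfy the hypotheses $B_1 1=B_1^{*}1=0$, and your verification that the proof nevertheless goes through because $\langle \mathrm{Id}\,\psi_I^{\sigma},\psi_{I''}^{\sigma''}\rangle=\delta_{(I,\sigma),(I'',\sigma'')}\leq C\,p_{\delta}(I,I'')$ so the almost-diagonal machinery and the cancellation $\sum_{I'',\sigma''}$ still apply, is exactly the point the paper leaves implicit, and you have filled it in correctly.
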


\begin{Lemma}\label{technique}
Let $b$ be a non-constant $BMO$-function and $T$ be a Calder\'on-Zygmund operator with $T1=T^*1=0$. Assume that $f\in H^1_b(\mathbb R^n)$ has the wavelet decomposition $f=\sum_{j=1}^\infty \sum_{I\subset R_j}\sum_{\sigma\in E} \langle f,\psi_I^\sigma\rangle \psi_I^\sigma$ where the $R_j$'s are dyadic cubes and $\sum_{I\subset R_j}\sum_{\sigma\in E} \langle f,\psi_I^\sigma\rangle \psi_I^\sigma$ are multiples of $\psi$-atoms related to the cubes $R_j$. Set $f_k=\sum_{j=1}^k \sum_{I\subset R_j}\sum_{\sigma\in E} \langle f,\psi_I^\sigma\rangle \psi_I^\sigma$, $k=1,2,...$ Then, the sequence $\{[b,T](f_k)\}_{k\geq 1}$ tends to $[b,T](f)$ in the sense of distributions $\mathcal S'(\mathbb R^n)$.
\end{Lemma}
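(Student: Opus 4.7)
The plan is to combine the bilinear decomposition of Theorem \ref{Th1} with the convergence of the wavelet sum. By hypothesis each $\sum_{I\subset R_j}\sum_{\sigma\in E}\langle f,\psi_I^\sigma\rangle\psi_I^\sigma$ is a multiple of a $\psi$-atom, so Theorem \ref{atomic decomposition} shows that $f_k\to f$ in $H^1(\mathbb R^n)$. Since $T\in\mathcal K$ is linear, applying Theorem \ref{Th1} to both $f_k\in H^1(\mathbb R^n)$ and $f\in H^1_b(\mathbb R^n)\subset H^1(\mathbb R^n)$ gives, as equalities in $\mathcal S'(\mathbb R^n)$,
\begin{equation*}
[b,T](f_k)=\mathfrak R(f_k,b)+T\mathfrak S(f_k,b),\qquad [b,T](f)=\mathfrak R(f,b)+T\mathfrak S(f,b).
\end{equation*}
It thus suffices to establish the $\mathcal S'$-convergence of each of the two summands on the right.

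The $\mathfrak R$-term is disposed of immediately: bilinear boundedness of $\mathfrak R:H^1(\mathbb R^n)\times BMO(\mathbb R^n)\to L^1(\mathbb R^n)$ gives
\begin{equation*}
\|\mathfrak R(f_k,b)-\mathfrak R(f,b)\|_{L^1}\leq C\|f_k-f\|_{H^1}\|b\|_{BMO}\longrightarrow 0,
\end{equation*}
which is stronger than $\mathcal S'$-convergence. For the $T\mathfrak S$-term, Lemma \ref{Le1} first yields $\mathfrak S(f_k,b)\to\mathfrak S(f,b)$ in $L^1(\mathbb R^n)$. Testing against $\varphi\in\mathcal S(\mathbb R^n)$, I would rely on the duality identity
\begin{equation*}
\langle T\mathfrak S(f_\star,b),\varphi\rangle=\int_{\mathbb R^n}\mathfrak S(f_\star,b)(x)\,T^*\varphi(x)\,dx,\qquad f_\star\in\{f,f_k\},
\end{equation*}
together with the classical fact that for $T$ a Calder\'on-Zygmund operator and $\varphi\in\mathcal S(\mathbb R^n)$, the function $T^*\varphi$ is smooth, bounded, and satisfies $|T^*\varphi(x)|=O(|x|^{-n})$ outside a compact set; in particular $T^*\varphi\in L^\infty(\mathbb R^n)$. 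Once the identity is in hand, the conclusion reads
\begin{equation*}
\bigl|\langle T\mathfrak S(f_k,b)-T\mathfrak S(f,b),\varphi\rangle\bigr|\leq \|\mathfrak S(f_k,b)-\mathfrak S(f,b)\|_{L^1}\,\|T^*\varphi\|_{L^\infty}\longrightarrow 0.
\end{equation*}

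The main technical obstacle is to justify the displayed duality identity for the two classes of arguments, since $\mathfrak S(f_k,b)$ is not a priori in $H^1$ and $\mathfrak S(f,b)$ is not a priori in $L^2$. For $h=\mathfrak S(f_k,b)$: since $f_k$ is a finite linear combination of $\psi$-atoms, Lemma \ref{Nirenberg} applied term-by-term yields $\mathfrak S(f_k,b)\in L^q(\mathbb R^n)$ for some $q\in(1,2)$; hence $T\mathfrak S(f_k,b)\in L^q(\mathbb R^n)$ by the $L^q$-boundedness of $T$, and the identity is nothing but $L^q$-$L^{q'}$ duality. For $h=\mathfrak S(f,b)$: Theorem \ref{Bonami} (using $f\in H^1_b(\mathbb R^n)$) gives $\mathfrak S(f,b)\in H^1(\mathbb R^n)$, so I would approximate it in $H^1(\mathbb R^n)$ by finite atomic sums $h_N\in H^1\cap L^2$, invoke elementary $L^2$-duality $\int Th_N\cdot\varphi\,dx=\int h_N\cdot T^*\varphi\,dx$ for each $N$, and pass to the limit using $Th_N\to T\mathfrak S(f,b)$ in $L^1$, $h_N\to\mathfrak S(f,b)$ in $L^1$, and $T^*\varphi\in L^\infty$. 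Combining these ingredients yields $\langle[b,T](f_k),\varphi\rangle\to\langle[b,T](f),\varphi\rangle$ for every $\varphi\in\mathcal S(\mathbb R^n)$, which is the asserted $\mathcal S'$-convergence.
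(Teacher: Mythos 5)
Your proposal is correct and follows essentially the same route as the paper: reduce via the bilinear decomposition of Theorem \ref{Th1}, dispose of the $\mathfrak R$-term by its $L^1$-boundedness, and move $T$ onto the test function using $\mathfrak S(f_k,b)\in L^q$ (Lemma \ref{Nirenberg}), $\mathfrak S(f,b)\in H^1$ (Theorem \ref{Bonami}) approximated by atomic sums, and $\mathfrak S(f_k,b)\to\mathfrak S(f,b)$ in $L^1$. The one point to tighten is that $T^*\varphi\in L^\infty(\mathbb R^n)$ for $\varphi\in\mathcal S(\mathbb R^n)$ is not a fact about arbitrary Calder\'on--Zygmund operators but requires the hypothesis $T^*1=0$ (this is exactly Lemma A of the paper, quoted from \cite{FTW}); otherwise your use of $L^q$--$L^{q'}$ and $L^2$ duality where the paper invokes Theorem A is an equally valid, slightly more elementary justification of the same identity.
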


\begin{proof}[Proof of Lemma \ref{almost diagonal 1}]
We first remark (see \cite{MC}, Proposition 1) that  there exists a constant $C>0$ such that for all dyadic cubes $I, I'$ and $\sigma,\sigma'\in E$, we have
\begin{equation}\label{Meyer}
\max\{|\langle A\psi^{\sigma}_I, \psi^{\sigma'}_{I'}\rangle|, |\langle B\psi^{\sigma}_{I},\psi^{\sigma'}_{I'}\rangle|\}\leq C 2^{-|j-j'|(\delta+n/2)}\Big(\frac{2^{-j}+2^{-j'}}{2^{-j}+2^{-j'}+|x_I-x_{I'}|}\Big)^{n+\delta}.
\end{equation}
Consequently,
\begin{equation}\label{Dob}
\max\{|\langle A\psi^{\sigma}_I, \psi^{\sigma'}_{I'}\rangle|, |\langle B\psi^{\sigma}_{I},\psi^{\sigma'}_{I'}\rangle|\}\leq C p_\delta(I,I')
\end{equation}
with 
$$p_\delta(I,I')=\frac{2^{-|j-j'|(\delta/2+n/2)}}{1+ |j-j'|^2} \Big(\frac{2^{-j}+2^{-j'}}{2^{-j}+2^{-j'}+|x_I-x_{I'}|}\Big)^{n+\delta/2}.$$
Here  $|I|=2^{-jn}$ and $|I'|=2^{-j'n}$, while $x_I$ and $x_{I'}$ denote the centers of the two cubes. On the other hand, it follows from Lemma 1.3 in \cite{Do} that there exists a constant $C=C(n,\delta)>0$ such that
\begin{equation}\label{Dob2}
\sum_{I''}p_\delta(I,I'')p_\delta(I',I'')\leq C p_\delta(I,I').
\end{equation}
Combining (\ref{Dob}) and (\ref{Dob2}), we obtain 
$$\sum_{I,I',I''}\sum_{\sigma,\sigma',\sigma''\in E}|\langle f,\psi^{\sigma}_I\rangle \langle g,\psi^{\sigma'}_{I'}\rangle \langle A\psi^{\sigma}_I, \psi^{\sigma''}_{I''}\rangle \langle B\psi^{\sigma'}_{I'},\psi^{\sigma''}_{I''}\rangle|\leq C \sum_{I,I'}\sum_{\sigma,\sigma'\in E} p_\delta(I,I')|\langle f,\psi^{\sigma}_I\rangle|| \langle g,\psi^{\sigma'}_{I'}\rangle|.$$

It is easy to establish that the matrix $\{p_{\delta}(I, I')\}_{I,I'}$  is almost diagonal (by taking $\varepsilon= \delta/4$ in the definition (3.1) of Frazier and Jawerth \cite{FJ}) and thus is bounded on $\dot f^{0,2}_1$ the space  of all sequences $(a_I)_{I}$ such that $\Big(\sum_I |a_I|^2 |I|^{-1}\chi_I\Big)^{1/2}$ is in $L^1(\mathbb R^n)$. We then use the wavelet characterization of $H^1(\mathbb R^n)$ (see Theorem \ref{wavelet characterization}) and the fact that (cf. \cite{FJ})
$$\sum_{I'}\sum_{\sigma'\in E}|\langle h,\psi^{\sigma'}_{I'}\rangle| |\langle g,\psi^{\sigma'}_{I'}\rangle|\leq C \|h\|_{H^1}\|g\|_{BMO},$$
for all $h\in H^1(\mathbb R^n)$, to conclude that
$$\sum_{I,I',I''}\sum_{\sigma,\sigma',\sigma''\in E}|\langle f,\psi^{\sigma}_I\rangle \langle g,\psi^{\sigma'}_{I'}\rangle \langle A\psi^{\sigma}_I, \psi^{\sigma''}_{I''}\rangle \langle B\psi^{\sigma'}_{I'},\psi^{\sigma''}_{I''}\rangle|\leq C \|f\|_{H^1}\|g\|_{BMO}.$$

\end{proof}

\begin{proof}[Proof of Lemma \ref{almost diagonal 2}]
By Lemma \ref{almost diagonal 1}, we have
\begin{eqnarray*}
\mathfrak P(f,g)&=&\sum_{i=1}^K \mathfrak S(A_i f, B_i g)\\
&=&
\sum_{i=1}^K \sum_{I,I',I''}\sum_{\sigma,\sigma', \sigma''\in E}\langle f,\psi^\sigma_I\rangle \langle g,\psi^{\sigma'}_{I'}\rangle \langle A_i \psi^\sigma_I, \psi^{\sigma''}_{I''}\rangle \langle B_i \psi^{\sigma'}_{I'}, \psi^{\sigma''}_{I''}\rangle (\psi^{\sigma''}_{I''})^2
\end{eqnarray*}
where all the series converge in $L^1(\mathbb R^n)$. For any dyadic cubes $I,I'$, $\sigma,\sigma'\in E$, we have
\begin{eqnarray*}
&&\sum_{i=1}^K \sum_{I''}\sum_{\sigma''\in E}\langle f,\psi^\sigma_I\rangle \langle g,\psi^{\sigma'}_{I'}\rangle \langle A_i \psi^\sigma_I, \psi^{\sigma''}_{I''}\rangle \langle B_i \psi^{\sigma'}_{I'}, \psi^{\sigma''}_{I''}\rangle (\psi^{\sigma''}_{I''})^2\\
&&\quad= \sum_{i=1}^K \sum_{I''}\sum_{\sigma''\in E}\langle f,\psi^\sigma_I\rangle \langle g,\psi^{\sigma'}_{I'}\rangle \langle A_i \psi^\sigma_I, \psi^{\sigma''}_{I''}\rangle \langle B_i \psi^{\sigma'}_{I'}, \psi^{\sigma''}_{I''}\rangle \Big((\psi^{\sigma''}_{I''})^2- (\psi^\sigma_I)^2\Big)
\end{eqnarray*}
since (see  Remark \ref{remark})
$$\sum_{i=1}^K \sum_{I''}\sum_{\sigma''\in E} \langle A_i \psi^\sigma_I, \psi^{\sigma''}_{I''}\rangle \langle B_i \psi^{\sigma'}_{I'}, \psi^{\sigma''}_{I''}\rangle = \int_{\mathbb R^n}\Big(\sum_{i=1}^K A_i \psi^\sigma_I.  B_i \psi^{\sigma'}_{I'}\Big)dx=0.$$
 An explicit computation gives that $|\psi_{I''}^{\sigma''}|^2-|\psi_{I}^{\sigma}|^2$ is in $H^1(\bR^n)$, with
$$\||\psi_{I''}^{\sigma''}|^2-|\psi_{I}^{\sigma}|^2\|_{H^1}\leq C\left(\log (2^{-j}+2^{-j''})^{-1}+\log(|x_I-x_{I''}|+2^{-j}+2^{-j''})\right).$$
Here  $|I|=2^{-jn}$ and $|I''|=2^{-j''n}$, while $x_I$ and $x_{I''}$ denote the centers of the two cubes. Consequently, by (\ref{Meyer}) and (\ref{Dob2}), we get
\begin{eqnarray*}
&& \Big\|\sum_{i=1}^K \sum_{I''}\sum_{\sigma''\in E}\langle f,\psi^\sigma_I\rangle \langle g,\psi^{\sigma'}_{I'}\rangle \langle A_i \psi^\sigma_I, \psi^{\sigma''}_{I''}\rangle \langle B_i \psi^{\sigma'}_{I'}, \psi^{\sigma''}_{I''}\rangle (\psi^{\sigma''}_{I''})^2\Big\|_{H^1}\\
&\leq&
\sum_{i=1}^K \sum_{I''}\sum_{\sigma''\in E}|\langle f,\psi^\sigma_I\rangle \langle g,\psi^{\sigma'}_{I'}\rangle \langle A_i \psi^\sigma_I, \psi^{\sigma''}_{I''}\rangle \langle B_i \psi^{\sigma'}_{I'}, \psi^{\sigma''}_{I''}\rangle|\Big\|(\psi^{\sigma''}_{I''})^2- (\psi^\sigma_I)^2\Big\|_{H^1}\\
&\leq&
C \sum_{i=1}^K \sum_{I''}\sum_{\sigma''\in E}|\langle f,\psi^\sigma_I\rangle \langle g,\psi^{\sigma'}_{I'}\rangle| p_\delta(I,I'')p_\delta(I',I'')\\
&\leq&
C  p_\delta(I,I')|\langle f,\psi^\sigma_I\rangle| |\langle g,\psi^{\sigma'}_{I'}\rangle|,
\end{eqnarray*}
here we used the fact that
$$(1+ |j-j''|^2)\log\Big(\frac{|x_I-x_{I''}|+2^{-j}+2^{-j''}}{2^{-j}+2^{-j''}}\Big)\leq C(\delta) 2^{|j-j''|\delta/2}\Big(\frac{|x_I-x_{I''}|+2^{-j}+2^{-j''}}{2^{-j}+2^{-j''}}\Big)^{\delta/2}.$$
Thus, the same argument as in the proof of Lemma \ref{almost diagonal 1} allows to conclude that
\begin{eqnarray*}
\|\mathfrak P(f,g)\|_{H^1}&\leq& C \sum_{I,I'}\sum_{\sigma,\sigma'\in E} p_\delta(I,I')|\langle f,\psi^\sigma_I\rangle| |\langle g,\psi^{\sigma'}_{I'}\rangle| \\
&\leq&
C \|f\|_{H^1}\|g\|_{BMO},
\end{eqnarray*}
which ends the proof.

\end{proof}

Before giving the proof of  Lemma \ref{technique}, let us recall the following lemma. It can be found in \cite{FTW}.\\\\
{\bf Lemma A.} (see \cite{FTW}, Lemma 2.3)
Let $T$ be a Calder\'on-Zygmund operator satisfying $T1=0$. Then $T$ maps $\mathcal S(\mathbb R^n)$ into $L^\infty(\mathbb R^n)$. Moreover, there exists a constant $C>0$, depending only on $T$, such that for any $\phi\in \mathcal S(\mathbb R^n)$ with supp $\phi\subset B(x_0,r)$, we have
$$\|T\phi\|_{L^\infty}\leq C (\|\phi\|_{L^\infty}+ r\||\nabla \phi|\|_{L^\infty}).$$

\begin{proof}[Proof of Lemma \ref{technique}]
By Theorem \ref{Th1}, it is sufficient to prove that 
$$\lim_{k\to\infty} \int_{\mathbb R^n} T(\mathfrak S(f_k, b))h dx= \int_{\mathbb R^n} T(\mathfrak S(f, b))h dx,$$
for all $h\in \mathcal S(\mathbb R^n)$. Because of the hypothesis, we observe that  $\mathfrak S(f, b)\in H^1(\mathbb R^n)$ and $\mathfrak S(f_k, b)\in L^q(\mathbb R^n)$, $k=1, 2,...$, for some $q\in (1,2)$ (see Lemma \ref{Nirenberg}).

 Let $\mathfrak S(f, b)= \sum_{j=1}^\infty \lambda_j a_j$ be a classical $L^q$-atomic decomposition of $\mathfrak S(f, b)$. Then, $T(\sum_{j=1}^k \lambda_j a_j)$ tends to $T(\mathfrak S(f, b))$ in $L^1(\mathbb R^n)$ (in fact, it also holds in $H^1(\mathbb R^n)$ since $T^*1=0$). Hence, as $h\in \mathcal S(\mathbb R^n)\subset L^\infty(\mathbb R^n)\cap L^{q'}(\mathbb R^n)$ where $1/q+1/{q'}=1$, $ \mathfrak S(f_k, b), a_j\in L^q(\mathbb R^n)$ and $T^*h\in L^\infty(\mathbb R^n)$ since $T^*1=0$ (see Lemma A), by Theorem A  we get
\begin{eqnarray*}
\int_{\mathbb R^n} T(\mathfrak S(f, b)) hdx&=&\lim_{k\to\infty} \int_{\mathbb R^n} T\Big(\sum_{j=1}^k \lambda_j a_j\Big) hdx =  \lim_{k\to\infty} \int_{\mathbb R^n} \Big(\sum_{j=1}^k \lambda_j a_j\Big) T^*h dx\\
&=& \int_{\mathbb R^n}\mathfrak S(f, b) T^*h dx= \lim_{k\to\infty} \int_{\mathbb R^n} \mathfrak S(f_k, b)T^*h dx\\
&=& \lim_{k\to\infty} \int_{\mathbb R^n} T(\mathfrak S(f_k, b))h dx,
\end{eqnarray*}
since $\mathfrak S(f_k, b)$ tends to $\mathfrak S(f, b)$ in $L^1(\mathbb R^n)$ as $f_k$ tends to $f$ in $H^1(\mathbb R^n)$ (see Theorem \ref{Th1}). This finishes the proof.

\end{proof}

\begin{proof}[Proof of Theorem \ref{bilinear estimates}]
Let $(f,g)\in H^1(\mathbb R^n)\times BMO(\mathbb R^n)$. By Theorem \ref{Th1} and Lemma \ref{almost diagonal 2}, we obtain $\mathfrak T(f,g)= \sum_{i=1}^K [B_i g, T](A_i f)\in L^1(\mathbb R^n)$, moreover,
\begin{eqnarray*}
\|\mathfrak T(f,g)\|_{L^1}&\leq& \sum_{i=1}^K \|\mathfrak R(A_i f, B_i g)\|_{L^1}+ \Big\|T\Big(\sum_{i=1}^K \mathfrak S(A_i f, B_i g)\Big)\Big\|_{L^1}\\
&\leq&
C \sum_{i=1}^K \|A_i f\|_{H^1}\|B_i g\|_{BMO}+ \|T\|_{H^1\to L^1} \Big\|\sum_{i=1}^K \mathfrak S(A_i f, B_i g)\Big\|_{H^1}\\
&\leq&
C \|f\|_{H^1}\|g\|_{BMO}.
\end{eqnarray*}
This completes the proof.

\end{proof}

\begin{proof}[Proof of Theorem \ref{Th3}]
Let $f\in H^1_b(\mathbb R^n)$, we prove $[b,T](f)\in h^1(\mathbb R^n)$ using the fact that $BMO^{\rm log}(\mathbb R^n)$ is the dual of $H^{\rm log}(\mathbb R^n)$ (see \cite{Ky}). Indeed, by Theorem \ref{atomic decomposition}, there exists a decomposition $f= \sum_{j=1}^\infty \sum_{I\subset R_j}\sum_{\sigma\in E} \langle f,\psi_I^\sigma\rangle \psi_I^\sigma$ where $\sum_{I\subset R_j}\sum_{\sigma\in E} \langle f,\psi_I^\sigma\rangle \psi_I^\sigma$ are multiples of $\psi$-atoms related to the dyadic cubes $R_j$. Set $f_k= \sum_{j=1}^k \sum_{I\subset R_j}\sum_{\sigma\in E} \langle f,\psi_I^\sigma\rangle \psi_I^\sigma$, $k=1,2,...$ Then, the sequence $[b,T](f_k)$ tends to $[b,T](f)$ in the sense of distributions $\mathcal S'(\mathbb R^n)$ (see Lemma \ref{technique}), and thus
\begin{equation}\label{lim}
\lim\limits_{k\to\infty}\int_{\mathbb R^n} [b,T](f_k) hdx = \int_{\mathbb R^n} [b,T](f) hdx,
\end{equation}
for all $h\in C^\infty_0(\mathbb R^n)$. Notice that $[b,T](f_k)\in L^2(\mathbb R^n)$ and $[b,T](f)\in L^1(\mathbb R^n)$.

Let $h\in C^\infty_0(\mathbb R^n)$.  By Lemma \ref{Le2}, Lemma \ref{Le3},  Lemma \ref{Le4}, Remark \ref{remark} and Corollary \ref{operator S}, we have $hT(f_k)- f_k\Big(T^*h- (T^*h)_{\mathbb Q}\Big)\in H^{\rm log}(\mathbb R^n)$. More precisely,
\begin{eqnarray*}
&&\Big\|hT(f_k)- f_k\Big(T^*h- (T^*h)_{\mathbb Q}\Big) \Big\|_{H^{\rm log}}\\
&\leq& C\Bigg\{ \Big\|\mathfrak S(T(f_k), h)- \mathfrak S\Big(f_k, T^*h-(T^*h)_{\mathbb Q}\Big)\Big\|_{H^1}+\\
&&
\quad\quad+ \sum_{j=1,4}\left(\|\Pi_j(T(f_k), h)\|_{H^1}+ \Big\|\Pi_j\Big(f_k, T^*h-(T^*h)_{\mathbb Q}\Big)\Big\|_{H^1}\right)+\\
&&
\quad\quad\quad\quad\quad\quad+ \|\Pi_2(T(f_k), h)\|_{H^{\rm log}} + \Big\|\Pi_2\Big(f_k, T^*h-(T^*h)_{\mathbb Q}\Big)\Big\|_{H^{\rm log}}\Bigg\}\\
&\leq&
C \Bigg\{\|f_k\|_{H^1}\|h\|_{BMO}+ \|T(f_k)\|_{H^1}\|h\|_{BMO}+ \|f_k\|_{H^1}\Big\|T^*h-(T^*h)_{\mathbb Q}\Big\|_{BMO} +\\
 &&
\quad\quad\quad\quad\quad\quad\quad\quad\quad + \|T(f_k)\|_{H^1}\|h\|_{BMO^+}+ \|f_k\|_{H^1}\|T^*h-(T^*h)_{\mathbb Q}\|_{BMO^+} \Bigg\}\\
&\leq&
C (\|f_k\|_{H^1} \|h\|_{bmo}+ \|f_k\|_{H^1} \|T^*h\|_{BMO})\leq C \|f\|_{H^1}\|h\|_{bmo},
\end{eqnarray*}
here one used $\mathfrak S\Big(f, T^*h-(T^*h)_{\mathbb Q}\Big)= \mathfrak S(f, T^*h)$, $\|T^*h-(T^*h)_{\mathbb Q}\|_{BMO^+}=\|T^*h\|_{BMO}$ and $\|f_k\|_{H^1}\leq C \|f\|_{H^1}$. As the $L^2$- functions $f_k$ have compact support, $b\in BMO^{\rm log}(\mathbb R^n)\subset BMO(\mathbb R^n)$, we deduce that $bhT(f_k), hT(bf_k), bf_kT^*h\in L^1(\mathbb R^n)$. Moreover, $\int_{\mathbb R^n} hT(bf_k)dx= \int_{\mathbb R^n} bf_kT^*hdx$ since $ hT(bf_k)- bf_kT^*h\in H^1(\mathbb R^n)$  (see Theorem A). Therefore, as $BMO^{\rm log}(\mathbb R^n)$ is the dual of $H^{\rm log}(\mathbb R^n)$ (see \cite{Ky}), we get 
\begin{eqnarray*}
\left|\int_{\mathbb R^n} [b,T](f_k) h dx \right|&=& \left|\int_{\mathbb R^n} b (hT(f_k)- f_kT^*h)dx\right|\\
&\leq& \left|\int_{\mathbb R^n} b \left(hT(f_k)- f_k\Big(T^*h- (T^*h)_{\mathbb Q}\Big)\right)dx\right| + |(T^*h)_{\mathbb Q}|\left|\int_{\mathbb R^n} bf_k dx\right|\\
&\leq&
C \|b\|_{BMO^{\rm log}}\Big\|hT(f_k)- f_k\Big(T^*h- (T^*h)_{\mathbb Q}\Big) \Big\|_{H^{\rm log}} + |(T^*h)_{\mathbb Q}|\left|\int_{\mathbb R^n} bf_k dx\right|\\
&\leq&
C \|b\|_{BMO^{\rm log}} \|f\|_{H^1}\|h\|_{bmo} + |(T^*h)_{\mathbb Q}| \Big|\sum_{j=1}^k \sum_{I\subset R_j}\sum_{\sigma\in E}\langle f,\psi_I^\sigma\rangle \langle b,\psi_I^\sigma\rangle\Big|.
\end{eqnarray*}

The above inequality and (\ref{lim}) imply that for all $h\in C^\infty_0(\mathbb R^n)$, we obtain
$$\left|\int_{\mathbb R^n} [b,T](f) h dx \right|\leq C \|b\|_{BMO^{\rm log}} \|f\|_{H^1}\|h\|_{bmo}$$
since $\mathfrak S(f,b)\in H^1(\mathbb R^n)$ (see Theorem \ref{Bonami}) and thus (see Remark \ref{remark})
$$\lim_{k\to \infty} \sum_{j=1}^k \sum_{I\subset R_j}\sum_{\sigma\in E}\langle f,\psi_I^\sigma\rangle \langle b,\psi_I^\sigma\rangle=\int_{\mathbb R^n} \mathfrak S(f,b)dx=0.$$
This proves that $[b,T](f)\in h^1(\mathbb R^n)$ since $h^1(\mathbb R^n)$ is the dual of $vmo(\mathbb R^n)$ (see Section 2). Furthermore, 
$$\|[b,T](f)\|_{h^1}\leq C \|b\|_{BMO^{\rm log}} \|f\|_{H^1}\leq C \|b\|_{BMO^{\rm log}}\|b\|^{-1}_{BMO} \|f\|_{H^1_b},$$
which ends the proof of Theorem \ref{Th3}.

\end{proof}

\begin{proof}[Proof of Theorem \ref{Th4}]
By Theorem \ref{Th1} and Theorem \ref{Bonami} together with Lemma \ref{Le2} and Lemma \ref{Le3}, it is sufficient to prove that the linear operator
$$f\mapsto \mathfrak U(f,b):= bTf- T(\Pi_2(f,b))$$
is bounded from $H^1(\mathbb R^n)$ into itself. Similarly to the proof of Theorem \ref{new1}, we first consider $f$ a  $\psi$-atom related to the cube $Q = Q[x_0, r]$ and note that
\begin{equation}\label{lost 1}
\mathfrak U(f,b)= \mathfrak U(f,b- b_Q)= (b-b_Q) Tf- T(\Pi_2(f,b-b_Q)).
\end{equation}

Let  $\varepsilon\in (0, 1)$, recall that (see \cite{TW}) $g$ is an $\varepsilon$-molecule for $H^1(\mathbb R^n)$ centered at $y_0$ if
$$\int_{\mathbb R^n}g(x)dx=0\quad\mbox{and}\quad \|g\|_{L^q}^{1/2}\|g|\cdot-y_0|^{2n\varepsilon}\|_{L^q}^{1/2}=: \mathfrak N(g)<\infty,$$
where $q=1/(1-\varepsilon)$. It is well known that if $g$ is an $\varepsilon$-molecule for $H^1(\mathbb R^n)$ centered at $y_0$, then $g\in  H^1(\mathbb R^n)$ and $\|g\|_{H^1}\leq C \mathfrak N(g)$ where $C > 0$ depends only on $n, \varepsilon$.

We now prove that $(b - b_Q)Tf$ is an $\varepsilon$-molecule for $H^1(\mathbb R^n)$ centered at $x_0$ when $T$ is a $\delta$-Calder\'on-Zygmund operator for some $\delta\in (0, 1]$ and $\varepsilon =\delta/(4n) < 1/2$. Note first that $f$ is $C$ times a classical $L^2$-atom related to the cube $mQ$. It is clear that $\int_{\mathbb R^n}(b-b_Q)Tf dx=0$ since $T^*1=T^*b=0$. As $q=1/(1-\varepsilon)<2$, the fact $|b_Q- b_{2mQ}|\leq C \|b\|_{BMO}$ together with H\"older inequality and John-Nirenberg inequality, give
\begin{equation}\label{lost 2}
\|(b - b_Q)Tf. \chi_{2mQ}\|_{L^q}\leq C |Q|^{1/q-1}\|b\|_{BMO}.
\end{equation}
It is well-known that $|Tf(x)|\leq C\frac{r^\delta}{|x-x_0|^{n+\delta}}$, for all $x\in (2mQ)^c$, since $T$ is a $\delta$-Calder\'on-Zygmund operator. Hence
\begin{eqnarray*}
\|(b - b_Q)Tf. \chi_{(2mQ)^c}\|_{L^q}&\leq& C \left(\int_{(2mQ)^c} |b-b_Q|^q\Big(\frac{r^\delta}{|x-x_0|^{n+\delta}}\Big)^{q}dx\right)^{1/q}\\
&\leq& C |Q|^{1/q-1}\|b\|_{BMO}.
\end{eqnarray*}
The last inequality, which can be found in \cite{St}, is classical. Combining this and (\ref{lost 2}), we obtain
\begin{equation}\label{lost 3}
\|(b - b_Q)Tf\|_{L^q}\leq C |Q|^{1/q-1}\|b\|_{BMO}.
\end{equation}

Similarly, we also have
$$\|(b - b_Q)Tf. |\cdot-x_0|^{2n\varepsilon}.\chi_{2mQ}\|_{L^q}\leq C |Q|^{2\varepsilon+ 1/q-1}\|b\|_{BMO}$$
and as $2n\varepsilon= \delta/2$,
\begin{eqnarray*}
\|(b - b_Q)Tf. |\cdot-x_0|^{2n\varepsilon}. \chi_{(2mQ)^c}\|_{L^q}&\leq& C \left(\int_{(2mQ)^c} |b-b_Q|^q\Big(\frac{r^\delta}{|x-x_0|^{n+\delta/2}}\Big)^{q}dx\right)^{1/q}\\
&\leq& C |Q|^{2\varepsilon+ 1/q-1}\|b\|_{BMO}.
\end{eqnarray*}
Consequently,
$$\|(b - b_Q)Tf. |\cdot-x_0|^{2n\varepsilon}\|_{L^q}\leq C |Q|^{2\varepsilon+ 1/q-1}\|b\|_{BMO}.$$
Combining this and (\ref{lost 3}), we get $(b- b_Q)Tf$ is an $\varepsilon$-molecule for $H^1(\mathbb R^n)$ centered
at $x_0$, moreover,
$$\mathfrak N((b- b_Q)Tf)\leq C |Q|^{\varepsilon+ 1/q-1}\|b\|_{BMO}\leq C \|b\|_{BMO},$$
since $q=1/(1-\varepsilon)$. Thus, by (\ref{lost 1}) and Remark \ref{remark},
\begin{equation}\label{lost 4}
\|\mathfrak U(f,b)\|_{H^1}\leq C \mathfrak N((b- b_Q)Tf)+ \|T(\Pi_2(f,b-b_Q))\|_{H^1}\leq C \|b\|_{BMO}.
\end{equation}

Now, let us consider $f\in H^1_{\rm fin}(\mathbb R^n)$. By Lemma \ref{finite decomposition}, there exists a finite decomposition $f=\sum_{j=1}^k \lambda_j a_j$ such that $\sum_{j=1}^k |\lambda_j|\leq C \|f\|_{H^1}$. Consequently, by (\ref{lost 4}),
we obtain that
$$\|\mathfrak U(f,b)\|_{H^1}\leq \sum_{j=1}^k |\lambda_j| \|\mathfrak U(a_j,b)\|_{H^1}\leq C \|f\|_{H^1}\|b\|_{BMO},$$
which ends the proof as $H^1_{\rm fin}(\mathbb R^n)$ is dense in $H^1(\mathbb R^n)$ for the norm $\|\cdot\|_{H^1}$.

\end{proof}

\section{Commutators of Fractional integrals}

 Given $0<\alpha< n$, the fractional integral operator $I_\alpha$ is defined by
$$I_\alpha f(x)= \int_{\mathbb R^n} \frac{f(y)}{|x-y|^{n-\alpha}}dy.$$
Let $b$ be a locally integrable function. We consider the linear commutator $[b, I_\alpha]$ defined by
$$[b, I_\alpha](f)= b I_\alpha f- I_\alpha (bf).$$

We end this article by presenting some results related to commutators of fractional integrals as follows.

\begin{Theorem}
Let $0<\alpha< n$. There exist a bounded bilinear operator $\mathfrak R: H^1(\mathbb R^n)\times BMO(\mathbb R^n)\to L^{n/(n-\alpha)}(\mathbb R^n)$ and a bounded bilinear operator $\mathfrak S: H^1(\mathbb R^n)\times BMO(\mathbb R^n)\to L^1(\mathbb R^n)$ such that
$$[b, I_\alpha](f)= \mathfrak R(f,b)+ I_\alpha(\mathfrak S(f,b)).$$
\end{Theorem}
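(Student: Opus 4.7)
The plan is to follow the same scheme as the proof of Theorem~\ref{Th1}, replacing the pair $(T,L^1)$ by the pair $(I_\alpha,L^{n/(n-\alpha)})$. I take $\mathfrak S$ to be exactly the bilinear operator of Section~3, so that Lemma~\ref{Le1} gives its boundedness from $H^1(\mathbb R^n)\times BMO(\mathbb R^n)$ into $L^1(\mathbb R^n)$, and I set
$$\mathfrak R(f,b):= bI_\alpha f-I_\alpha\bigl(\Pi_2(f,b)\bigr)-I_\alpha\bigl(\Pi_1(f,b)+\Pi_4(f,b)\bigr).$$
Applying Theorem~\ref{Decomposition theorem} to $bf$ and then applying $I_\alpha$ and subtracting from $bI_\alpha f$ yields the identity $[b,I_\alpha](f)=\mathfrak R(f,b)+I_\alpha(\mathfrak S(f,b))$. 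It therefore remains to show that $\mathfrak R$ is bounded from $H^1\times BMO$ into $L^{n/(n-\alpha)}$.

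By Lemmas~\ref{Le2} and~\ref{Le3}, $\Pi_1+\Pi_4$ is bounded from $H^1\times BMO$ into $H^1$; combined with the classical Hardy--Littlewood--Sobolev estimate $I_\alpha\colon H^1(\mathbb R^n)\to L^{n/(n-\alpha)}(\mathbb R^n)$, this takes care of the last summand of $\mathfrak R$. The delicate piece is
$$\mathfrak U(f,b):= bI_\alpha f-I_\alpha\bigl(\Pi_2(f,b)\bigr),$$
which I treat exactly as in the proof of Theorem~\ref{new1}. For $f$ a $\psi$-atom related to a cube $Q$, one has $\mathfrak U(f,b)=\mathfrak U(f,b-b_Q)$; since $(b-b_Q)_Q=0$, part~(3) of Remark~\ref{remark} (through Lemma~\ref{Le4}) gives $\Pi_2(f,b-b_Q)\in H^1$ with norm at most $C\|b\|_{BMO}$, whence $\|I_\alpha(\Pi_2(f,b-b_Q))\|_{L^{n/(n-\alpha)}}\le C\|b\|_{BMO}$. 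Everything therefore reduces to the fractional-integral analogue of Lemma~\ref{vo}:
$$\|(b-b_Q)I_\alpha a\|_{L^{n/(n-\alpha)}}\le C\|b\|_{BMO}$$
for every classical $H^1$-atom $a$ related to $Q=Q[x_0,r]$. Once this atomic estimate is established, Lemma~\ref{finite decomposition} extends $\mathfrak U$ to $H^1_{\mathrm{fin}}(\mathbb R^n)$, and density concludes.

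The main obstacle is this atomic estimate, and I would prove it by cutting the integral at $2Q$. On $2Q$, H\"older's inequality with a pair of exponents $(p_1,p_2)$ satisfying $1/p_1+1/p_2=(n-\alpha)/n$ and with the conjugate $p_1'$ in $(1,n/\alpha)$ (which is possible for every $0<\alpha<n$) combines the John--Nirenberg bound $\|(b-b_Q)\chi_{2Q}\|_{L^{p_1}}\le C|Q|^{1/p_1}\|b\|_{BMO}$ with the Hardy--Littlewood--Sobolev bound $\|I_\alpha a\|_{L^{p_2}}\le C\|a\|_{L^{p_1'}}\le C|Q|^{-1/p_1}$, so the powers of $|Q|$ cancel and leave a uniform estimate of order $\|b\|_{BMO}$. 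On $(2Q)^c$, the moment condition $\int a\,dy=0$ produces the pointwise bound $|I_\alpha a(x)|\le Cr|x-x_0|^{-(n-\alpha+1)}$; decomposing into the dyadic annuli $2^{k+1}Q\setminus 2^kQ$, using $|b_{2^kQ}-b_Q|\le Ck\|b\|_{BMO}$ together with the John--Nirenberg inequality, and the identity $p(n-\alpha)=n$ with $p=n/(n-\alpha)$, the remaining integrals sum to a convergent series of type $\sum_{k\ge 1} k^p 2^{-kp}$, yielding the required constant.
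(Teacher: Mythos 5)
Your proposal is correct and is exactly the adaptation the paper itself indicates: Section 8 states that these results ``can be proved similarly to Theorem \ref{Th1} and Theorem \ref{new2}'' and leaves the details to the reader, and you have supplied precisely those details, with the two genuinely new ingredients being the Hardy--Littlewood--Sobolev bound $I_\alpha\colon H^1\to L^{n/(n-\alpha)}$ and the fractional analogue $\|(b-b_Q)I_\alpha a\|_{L^{n/(n-\alpha)}}\le C\|b\|_{BMO}$ of the class-$\mathcal K$ condition, whose two-piece verification (H\"older/John--Nirenberg on $2Q$, moment condition and dyadic annuli on $(2Q)^c$, using $p(n-\alpha)=n$) is sound.
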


\begin{Corollary}
Let $0<\alpha< n$ and $b\in BMO(\mathbb R^n)$. Then, the linear commutator $[b, I_\alpha]$ maps continuously $H^1(\mathbb R^n)$ into weak-$L^{n/(n-\alpha)}(\mathbb R^n)$.
\end{Corollary}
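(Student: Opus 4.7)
The plan is to follow the blueprint of Theorem 3.2, replacing $T$ by $I_\alpha$ and the target space $L^1(\mathbb R^n)$ by $L^{n/(n-\alpha)}(\mathbb R^n)$. I would take $\mathfrak S$ to be the same bilinear operator defined in Section 3,
$$\mathfrak S(f,g)=-\sum_{I}\sum_{\sigma\in E}\langle f,\psi_I^\sigma\rangle\langle g,\psi_I^\sigma\rangle(\psi_I^\sigma)^2,$$
which by Lemma \ref{Le1} is bounded from $H^1(\mathbb R^n)\times BMO(\mathbb R^n)$ into $L^1(\mathbb R^n)$. Using the Decomposition Theorem (Theorem \ref{Decomposition theorem}) to write $bf=\Pi_1(f,b)+\Pi_2(f,b)+\Pi_4(f,b)-\mathfrak S(f,b)$ and applying $I_\alpha$, I am led to define
$$\mathfrak R(f,b):=\bigl(b\,I_\alpha f-I_\alpha(\Pi_2(f,b))\bigr)-I_\alpha\bigl(\Pi_1(f,b)+\Pi_4(f,b)\bigr),$$
so that by construction $[b,I_\alpha](f)=\mathfrak R(f,b)+I_\alpha(\mathfrak S(f,b))$.

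Boundedness of $\mathfrak R$ into $L^{n/(n-\alpha)}(\mathbb R^n)$ splits into two parts. The first is routine: Lemma \ref{Le2} and Lemma \ref{Le3} give that $\Pi_1$ and $\Pi_4$ map $H^1(\mathbb R^n)\times BMO(\mathbb R^n)$ continuously into $H^1(\mathbb R^n)$, and the classical Hardy space fractional integral estimate $I_\alpha:H^1(\mathbb R^n)\to L^{n/(n-\alpha)}(\mathbb R^n)$ (Taibleson--Weiss/Stein--Weiss) then takes care of the term $I_\alpha(\Pi_1(f,b)+\Pi_4(f,b))$. The second, more delicate, part is the linear operator $\mathfrak V(f,b):=b\,I_\alpha f-I_\alpha(\Pi_2(f,b))$, which I would bound via a $\psi$-atom argument as in the proof of Theorem \ref{new1}. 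For a $\psi$-atom $a$ related to a cube $R$, using that $\Pi_2(a,c)=c\,a$ for every constant $c$ (a consequence of $\sum_{I'}\langle c,\phi_{I'}\rangle\phi_{I'}=c$), one has the key identity
$$\mathfrak V(a,b)=(b-b_R)\,I_\alpha a-I_\alpha\bigl(\Pi_2(a,b-b_R)\bigr),$$
and by Remark \ref{remark}(iii) and Lemma \ref{Le4}, $\|\Pi_2(a,b-b_R)\|_{H^1}\le C\|b\|_{BMO}$, so the second term is controlled by $C\|b\|_{BMO}$ in $L^{n/(n-\alpha)}(\mathbb R^n)$.

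The main obstacle is the fractional analogue of the defining estimate of the class $\mathcal K$, namely
$$\|(b-b_R)\,I_\alpha a\|_{L^{n/(n-\alpha)}}\le C\|b\|_{BMO}.$$
I would obtain it by splitting at $2mR$. On $2mR$, the pointwise bound $|I_\alpha a(x)|\le C|R|^{\alpha/n-1}$ (coming from $\|a\|_{L^\infty}\le |R|^{-1}$ and $\int_{|x-y|\le Cr}|x-y|^{\alpha-n}dy\le Cr^\alpha$) combined with the John--Nirenberg estimate for $\|(b-b_R)\chi_{2mR}\|_{L^{n/(n-\alpha)}}$ yields a bound $C\|b\|_{BMO}$. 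On $(2mR)^c$, the cancellation $\int a=0$ gives the decay $|I_\alpha a(x)|\le Cr|x-x_R|^{-(n+1-\alpha)}$, and with $p=n/(n-\alpha)$ one has $(n+1-\alpha)p=n+p$, so by the classical BMO estimate
$$\int_{|x-x_R|>cr}|b(x)-b_R|^p\,\frac{dx}{|x-x_R|^{n+p}}\le C\|b\|_{BMO}^p\,r^{-p}$$
the remaining integral contributes $Cr^p\cdot\|b\|_{BMO}^p\,r^{-p}=C\|b\|_{BMO}^p$, as required.

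Finally, to pass from $\psi$-atoms to all of $H^1(\mathbb R^n)$, I would invoke Lemma \ref{finite decomposition} (the equivalence of norms on $H^1_{\rm fin}(\mathbb R^n)$) exactly as in the end of the proof of Theorem \ref{new1}, which yields a uniform bound $\|\mathfrak V(f,b)\|_{L^{n/(n-\alpha)}}\le C\|f\|_{H^1}\|b\|_{BMO}$ on the dense subspace $H^1_{\rm fin}(\mathbb R^n)$ and extends by density. Combined with the already established bound on $I_\alpha(\Pi_1(f,b)+\Pi_4(f,b))$, this shows that $\mathfrak R$ is bounded from $H^1(\mathbb R^n)\times BMO(\mathbb R^n)$ into $L^{n/(n-\alpha)}(\mathbb R^n)$, completing the bilinear decomposition. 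The subsequent corollary (weak-$L^{n/(n-\alpha)}$ estimate) is then immediate from the weak type $(1,n/(n-\alpha))$ boundedness of $I_\alpha$ applied to the $L^1$-term $\mathfrak S(f,b)$.
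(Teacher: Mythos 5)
Your proposal follows exactly the route the paper intends: Section 8 states only that these results ``can be proved similarly to Theorem \ref{Th1} and Theorem \ref{new2}'' and leaves the details to the reader, and your reconstruction --- the same wavelet decomposition of $bf$, the fractional analogue $\|(b-b_R)I_\alpha a\|_{L^{n/(n-\alpha)}}\le C\|b\|_{BMO}$ of the class-$\mathcal K$ estimate proved by splitting at $2mR$, the density argument via Lemma \ref{finite decomposition}, and finally the weak type $(1,n/(n-\alpha))$ bound of $I_\alpha$ applied to the $\mathfrak S$-term --- is precisely that argument. One small repair: the paper's $\psi$-atoms are $L^2$-normalized ($\|a\|_{L^2}\le|R|^{-1/2}$), not $L^\infty$-normalized, so in the local part the pointwise bound obtained from $\|a\|_{L^\infty}\le|R|^{-1}$ is not literally available and should be replaced by H\"older together with $I_\alpha:L^p\to L^q$ for a suitable $p\in\bigl(1,\min(2,n/\alpha)\bigr)$; the far-field part uses only $\|a\|_{L^1}\le C$ and the cancellation of $a$, so it is unaffected.
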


\begin{Theorem}\label{the last theorem}
Let $0<\alpha< n$, $b\in BMO(\mathbb R^n)$, and $1<q\leq \infty$. Then, the linear commutator $[b, I_\alpha]$ maps continuously $ H^{1}_b(\mathbb R^n)$ into $L^{n/(n-\alpha)}(\mathbb R^n)$.
\end{Theorem}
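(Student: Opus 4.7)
The plan is to reduce Theorem \ref{the last theorem} directly to the bilinear decomposition stated just above it, combined with the characterization of $H^1_b(\mathbb R^n)$ given by Theorem \ref{Bonami} and the classical mapping property of the fractional integral on Hardy spaces. If $b$ is constant there is nothing to prove since $[b,I_\alpha]=0$, so we may assume $b$ is a non-constant $BMO$-function.

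First, apply the bilinear decomposition for $[b,I_\alpha]$ from the preceding theorem to write, for $(f,b)\in H^1(\mathbb R^n)\times BMO(\mathbb R^n)$,
$$[b, I_\alpha](f)= \mathfrak R(f,b)+ I_\alpha(\mathfrak S(f,b)),$$
where $\mathfrak R: H^1(\mathbb R^n)\times BMO(\mathbb R^n)\to L^{n/(n-\alpha)}(\mathbb R^n)$ and $\mathfrak S: H^1(\mathbb R^n)\times BMO(\mathbb R^n)\to L^1(\mathbb R^n)$ are bounded bilinear operators. From the boundedness of $\mathfrak R$ we immediately obtain
$$\|\mathfrak R(f,b)\|_{L^{n/(n-\alpha)}}\leq C\|f\|_{H^1}\|b\|_{BMO}\leq C\|f\|_{H^1_b}.$$

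For the second term, the implication $(i)\Rightarrow(ii)$ of Theorem \ref{Bonami} shows that whenever $f\in H^1_b(\mathbb R^n)$ one has $\mathfrak S(f,b)\in H^1(\mathbb R^n)$, together with the quantitative estimate
$$\|\mathfrak S(f,b)\|_{H^1}\leq C\|f\|_{H^1_b}.$$
Combining this with the classical $H^1$-$L^{n/(n-\alpha)}$ boundedness of the Riesz potential $I_\alpha$ (the Stein-Weiss / Taibleson-Weiss theorem on Hardy spaces, which is easily obtained by checking that $I_\alpha$ maps classical $H^1$-atoms uniformly into $L^{n/(n-\alpha)}$), we get
$$\|I_\alpha(\mathfrak S(f,b))\|_{L^{n/(n-\alpha)}}\leq C\|\mathfrak S(f,b)\|_{H^1}\leq C\|f\|_{H^1_b}.$$
Adding the two estimates yields the desired inequality $\|[b,I_\alpha](f)\|_{L^{n/(n-\alpha)}}\leq C\|f\|_{H^1_b}$, which proves the theorem.

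Strictly speaking there is no serious obstacle: the entire structural work has been done already to establish the bilinear decomposition of $[b,I_\alpha]$ and the characterization $(i)\Leftrightarrow(ii)$ of $H^1_b(\mathbb R^n)$ via $\mathfrak S$. The only point that requires a separate (though classical) reference is the $H^1\to L^{n/(n-\alpha)}$ mapping of the fractional integral; one should also check that the argument of Theorem \ref{Bonami} carries over unchanged in the present situation, since the decomposition $[b,I_\alpha](f)=\mathfrak R(f,b)+I_\alpha(\mathfrak S(f,b))$ has exactly the same form as the one used there for $T\in\widetilde{\mathcal K}$, with the $L^1$-target simply replaced by $L^{n/(n-\alpha)}$.
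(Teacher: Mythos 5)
Your proof is correct and follows exactly the route the paper intends: the paper omits the proof of this theorem, stating only that it is proved "similarly to Theorem \ref{Th1} and Theorem \ref{new2}," and Theorem \ref{new2} is itself deduced precisely as you do here — decomposition plus Theorem \ref{Bonami} plus the $H^1\to L^{n/(n-\alpha)}$ boundedness of $I_\alpha$ in place of $T\in\mathcal K$. Your filled-in argument is a faithful and complete version of that sketch.
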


The results above can be proved similarly to  Theorem \ref{Th1} and Theorem \ref{new2}. We leave the proofs to the interested readers. When $H^{1}_b(\mathbb R^n)$ is replaced by $\mathcal  H^{1}_b(\mathbb R^n)$, Theorem \ref{the last theorem} was considered by the authors in \cite{DLZ}. There, they  proved that
$$\sup\{\|[b,I_\alpha](a)\|_{L^{n/(n-\alpha)}}: a \,\mbox{is a}\; (\infty,b){\rm -atom}\}<\infty.$$
However, as pointed out before, this argument does not suffice to conclude that $[b,I_\alpha]$ is bounded from $\mathcal H^{1}_b(\mathbb R^n)$ into $L^{n/(n-\alpha)}(\mathbb R^n)$.

\end{document}